\documentclass[a4paper,11pt,reqno]{amsart}
\usepackage[utf8]{inputenc}
\usepackage{graphicx}
\usepackage{a4wide}
\usepackage[english]{babel}
\usepackage[T1]{fontenc}
\usepackage{amsmath,amssymb,amsthm,stmaryrd}
\usepackage{mathrsfs}
\usepackage{esint}
\usepackage{mathtools}
\mathtoolsset{showonlyrefs}
\usepackage{comment}
\usepackage{color}
\usepackage{extarrows}
\usepackage{tikz}
\usepackage{tcolorbox}
\usetikzlibrary{arrows}

\setcounter{tocdepth}{1}

\newcommand{\R}{\mathbb{R}}

\newcommand{\Sb}{\mathbb{S}}

\newcommand{\N}{\mathbb{N}}
\newcommand{\Z}{\mathbb{Z}}
\newcommand{\Ds}{\mathscr{D}}

\newcommand{\Sc}{\mathcal{S}}

\newcommand{\Ms}{\mathscr{M}}
\newcommand{\Ns}{\mathscr{N}}

\newcommand{\Qc}{\mathcal{Q}}

\newcommand{\diam}{\text{diam}}
\newcommand{\dist}{\text{dist}}

\newcommand{\eps}{\varepsilon}

\newcommand{\bu}{\bullet}

\def\Xint#1{\mathchoice
  {\XXint\displaystyle\textstyle{#1}}%
  {\XXint\textstyle\scriptstyle{#1}}%
  {\XXint\scriptstyle\scriptscriptstyle{#1}}%
  {\XXint\scriptscriptstyle\scriptscriptstyle{#1}}%
  \!\int}
\def\XXint#1#2#3{{\setbox0=\hbox{$#1{#2#3}{\int}$}
    \vcenter{\hbox{$#2#3$}}\kern-.5\wd0}}

\def\avgint{\Xint-}
\newcommand{\vertiii}[1]{{\left\vert\kern-0.25ex\left\vert\kern-0.25ex\left\vert #1
    \right\vert\kern-0.25ex\right\vert\kern-0.25ex\right\vert}}

\newtheorem{theorem}[equation]{Theorem}
\newtheorem*{theorem*}{Theorem}
\newtheorem{lemma}[equation]{Lemma}
\newtheorem{openproblem}[equation]{Open problem}
\newtheorem{corollary}[equation]{Corollary}
\newtheorem{proposition}[equation]{Proposition}

\theoremstyle{definition}
\newtheorem{defin}[equation]{Definition}
\newtheorem*{defin*}{Definition}

\newtheorem*{question*}{Question}
\newtheorem{example}[equation]{Example}
\newtheorem{remark}[equation]{Remark}
\newtheorem*{remark*}{Remark}

\pagestyle{plain}

\numberwithin{equation}{section}

\usepackage{rotating}

\makeatletter
\newcommand\@notni[2]{\mathrel{\rotatebox[y=#1]{180}{$#2\notin$}}}
\newcommand\notni{
\mathchoice
  {\@notni{0.57ex}\displaystyle}
  {\@notni{0.57ex}\textstyle}
  {\@notni{0.39ex}\scriptstyle}
  {\@notni{0.26ex}\scriptscriptstyle}
}
\makeatother

\title{$C_p$ estimates for rough homogeneous singular integrals and sparse forms}

\author{Javier Canto, Kangwei Li, Luz Roncal and Olli Tapiola}

\address[Javier Canto]{BCAM - Basque Center for Applied Mathematics, 48009 Bilbao, Spain}
\email{jcanto@bcamath.org}

\address[Kangwei Li]{Center for Applied Mathematics, Tianjin University, Weijin Road 92, 300072 Tianjin, China}
\email{kangwei.nku@gmail.com}

\address[Luz Roncal]{BCAM - Basque Center for Applied Mathematics \\
48009 Bilbao, Spain and Ikerbasque, Basque Foundation for Science, 48011 Bilbao, Spain}
\email{lroncal@bcamath.org}

\address[Olli Tapiola]{Department of Mathematics and Statistics, P.O. Box 35 (MaD), FI-40014 University of Jyv\"askyl\"a, Finland}
\email{olli.m.tapiola@gmail.com}

\keywords{Coifman--Fefferman inequality, $C_p$ weights, rough homogeneous singular integrals, sparse forms}
\subjclass[2010]{42B20}
\thanks{
J.C. was supported by Basque Government through Programa de formaci\'on de personal investigador no doctor. J.C. and L.R. were supported by Basque Government through the BERC 2018-2021 program, by the Spanish Ministry of Economy and Competitiveness through the project MTM2017-82160-C2-1-P and by the Spanish Ministry of Science, Innovation and Universities: BCAM Severo Ochoa accreditation SEV-2017-0718.\\
O.T. was partially supported by the V\"ais\"ala Fund through a Finnish Academy of Science and Letters travel grant. }

\allowdisplaybreaks

\begin{document}

\begin{abstract}
  We consider Coifman--Fefferman inequalities for rough homogeneous singular integrals $T_\Omega$ and $C_p$ weights. It was recently shown in \cite{liperezriverariosroncal} that
$$
    \|T_\Omega \|_{L^p(w)} \le C_{p,T,w} \|Mf\|_{L^p(w)}
$$
  for every $0< p < \infty$ and every $w \in A_\infty$. Our first goal is to generalize this result for every $w \in C_q$ where $q > \max\{1,p\}$ without using extrapolation theory.
Although the bounds we prove are new even in a qualitative sense, we also give the quantitative bound with respect to the $C_q$ characteristic.
Our techniques rely on recent advances in sparse domination theory and we actually prove most of our estimates for sparse forms.

  Our second goal is to continue the structural analysis of $C_p$ classes. We consider some weak self-improving properties of $C_p$ weights and weak and dyadic $C_p$ classes. We also revisit and generalize a counterexample by Kahanp\"a\"a and Mejlbro \cite{kahanpaamejlbro} who showed that $C_p \setminus \bigcup_{q > p} C_q \neq \emptyset$. We combine their construction with techniques of Lerner \cite{lerner_remarks} to define an explicit weight class $\widetilde{C}_p$ such that $\bigcup_{q > p} C_q \subsetneq \widetilde{C}_p \subsetneq C_p$ and every $w \in \widetilde{C}_p$ satisfies Muckenhoupt's conjecture \cite{muckenhoupt_norm_inequalities}. In particular, we give a different, self-contained proof for the fact that the $C_{p+\eps}$ condition is not necessary for the Coifman--Fefferman inequality and our ideas allow us to consider also dimensions higher than $1$.
\end{abstract}

\maketitle

\section{Introduction}

It is a long-standing open problem in harmonic analysis to characterize the weights $w$ that satisfy the Coifman--Fefferman inequality
\begin{equation}
  \label{ineq:coifman-fefferman}
  \|Tf\|_{L^p(w)} \le C \|Mf\|_{L^p(w)}
\end{equation}
for a fixed $0 < p < \infty$ and a uniform constant $C$, where $T$ is a singular integral operator and $M$ is the Hardy--Littlewood maximal operator
(see Section \ref{section:definitions} for precise definitions of these and subsequently mentioned objects).
The inequality was first verified for $A_\infty$ weights and maximally truncated Calder\'on--Zygmund operators
by Coifman and Fefferman \cite[Theorem III]{coifmanfefferman} who combined it with Muckenhoupt's theorem \cite{muckenhoupt_weighted_norm}
to conclude that Muckenhoupt's $A_p$ condition implies the uniform weighted $L^p$-boundedness of Calder\'on--Zygmund operators. Over the last decades, Coifman--Fefferman type domination inequalities have had an important role in many advances in harmonic analysis, see e.g. \cite{changwilsonwolff, grafakosmartellsoria, lernerombrosiperez, cruz-uribemartellperez, perezpradolinitorrestrujillo-gonzalez, chillfiorenza}. It was later shown by Muckenhoupt \cite{muckenhoupt_norm_inequalities} that weights satisfying \eqref{ineq:coifman-fefferman}
can actually vanish on a set with infinite measure and thus, the $A_\infty$ condition is too strong to characterize the inequality.
In addition, he showed that if \eqref{ineq:coifman-fefferman} holds for the Hilbert transform, then the
weight has to satisfy the so called \emph{$C_p$ condition}: there exist constants $C, \eps > 0$ such that for every cube $Q$
and every measurable set $E \subseteq Q$ we have
\begin{equation}
  \label{cond:c_p}
  w(E) \le C \left( \frac{|E|}{|Q|} \right)^\eps \int_{\R^n} (M1_Q)^p w.
\end{equation}
He conjectured that this condition is also sufficient for \eqref{ineq:coifman-fefferman}.
Sawyer \cite{sawyer_two_weight,sawyer_norm_inequalities} noted that \eqref{ineq:coifman-fefferman} holds also for weak $A_\infty$ weights, extended Muckenhoupt's
result for the Riesz transforms and gave a partial answer to Muckenhoupt's conjecture: if the weight $w$ satisfies
the $C_{p+\lambda}$ condition for some $\lambda > 0$ (which is stronger than the $C_p$ condition), then \eqref{ineq:coifman-fefferman} holds for
Calder\'on--Zygmund operators. Later, Kahanp\"a\"a and Mejlbro \cite{kahanpaamejlbro} showed that the $C_{p+\lambda}$ condition is not necessary for \eqref{ineq:coifman-fefferman} in dimension $1$, but the full answer to the conjecture is still not known in any dimension. Finally, we note that Martell, P\'erez and Trujillo-Gonz\'alez \cite{martellpereztrujillo-gonzalez} showed via extrapolation methods that there exist singular integral operators that do not satisfy \eqref{ineq:coifman-fefferman} for any $0 < p < \infty$ and any $w \in A_\infty$.

The $C_p$ classes resemble Muckenhoupt's $A_p$ classes in some ways (for example, $C_p$ weights satisfy Reverse H\"older type inequalities) but their overall structure
is much more chaotic. In particular, it was shown by Kahanp\"a\"a and Mejlbro \cite{kahanpaamejlbro} that unlike all $A_p$ weights, some $C_p$ weights do not have
any kind of self-improving property with respect to $p$, i.e. $C_p \setminus \bigcup_{q > p} C_{q} \neq \emptyset$.
Naturally, this and some other unfortunate properties of these weights have made it impossible to use any straightforward $A_p$ type techniques for the problem. However, although Muckenhoupt's conjecture is still open, many authors have managed to study other parts of the $C_p$ theory, see e.g. \cite{yabuta, buckley, delatorreriveros, lerner_remarks, auscherbortzegertsaari}.

In this paper, we have two goals. Our first goal is to prove Sawyer type $C_p$ estimates for \emph{rough homogeneous singular integrals}, i.e. integral operatos $T_\Omega$ defined as
$$
  T_\Omega f(x) \coloneqq \text{p.v.} \int_{\R^n} \frac{\Omega(y')}{|y|^n} f(x-y) \, dy,
$$
where $y' \coloneqq y / |y|$, $\Omega \in L^\infty(\mathbb{S}^{n-1})$  and $\int_{\mathbb S^{n-1}}\Omega \, d\sigma =0.$
These operators have been studied intensively by numerous authors both in unweighted and weighted settings, see e.g.
\cite{duoandikoetxearubiodefrancia, hofmann, christ, christrubiodefrancia, watson, duoandikoetxea_weighted, seeger, hytonenroncaltapiola, grafakoshehonzik}.
Our results complement the recent work of Cejas, the second author, P\'erez and Rivera-R\'ios \cite{cejasliperezrivera-rios} who discussed
Coifman--Fefferman inequalities for these operators in \cite[Remark 5]{cejasliperezrivera-rios}. They can also be seen as a continuation of the work of the second author, Per\'ez, Rivera-R\'ios and the third author \cite{liperezriverariosroncal} who proved these types of estimates for rough homogeneous singular integrals and $A_\infty$ weights, and the work of the first author \cite{canto} who recently introduced the $C_p$ constant $[w]_{C_p}$ (see Subsection \ref{section:weights} for the definition) and studied quantitative Coifman--Fefferman inequalities.

Let us be more precise. We prove the following inequalities:
\begin{theorem}
  \label{thm:main_result_rough}
  Suppose that $T_\Omega$ is a rough homogeneous singular integral with $\Omega \in L^\infty(\Sb^{n-1})$ satisfying $\int_{\Sb^{n-1}} \Omega \, d\sigma = 0$.
  Then the following inequalities hold:
  \begin{enumerate}
    \item[I)] if $1 < p < q < \infty$ and $w \in C_q$, then
              $$
                \| T_\Omega f\|_{L^p(w)} \le C_{n,p,q}  \big([w]_{C_q} + 1 \big)^{3} \log\big( [w]_{C_q} + e \big) \| \Omega \|_{L^\infty} \| Mf \|_{L^p(w)},
          $$

    \item[II)] if $0 < p \le 1 < q < \infty$ and $w \in C_q$, then
            $$
                 \| T_\Omega f\|_{L^p(w)} \le C_{n,p,q}  \big([w]_{C_q} + 1 \big)^{1+\frac 2p} \log^{\frac 1p}\big( [w]_{C_q} + e \big) \|\Omega \|_{L^\infty} \| Mf \|_{L^p(w)}.
             $$
  \end{enumerate}
  The constant $C_{n,p,q}$ satisfies $C_{n,p,q} \to \infty$ as $q \to p$.
\end{theorem}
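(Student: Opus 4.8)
The plan is to deduce Theorem~\ref{thm:main_result_rough} from a sparse domination of $T_\Omega$ combined with $C_p$ bounds for sparse forms. The key point is that although $T_\Omega$ itself is not pointwise dominated by sparse operators, one has (by the work of Conde-Alonso--Culiuc--Di Plinio--Ou and Lerner, see the references in the excerpt) the bilinear sparse form bound
$$
  |\langle T_\Omega f, g\rangle| \le C_n \|\Omega\|_{L^\infty} \sup_{\mathcal{S}} \sum_{S \in \mathcal{S}} |S| \, \langle |f| \rangle_{S} \, \langle |g| \rangle_{S},
$$
where the supremum is over sparse collections $\mathcal{S}$ (or rather the $(1,1)$-averaged version of it). Thus it suffices to prove the corresponding weighted estimate for the sparse forms themselves: for a sparse family $\mathcal{S}$, the operator $\mathcal{A}_{\mathcal{S}} f = \sum_{S} \langle |f|\rangle_S 1_S$ satisfies $\|\mathcal{A}_{\mathcal{S}} f\|_{L^p(w)} \lesssim [w]_{C_q}^{\alpha} \log(\cdots) \|Mf\|_{L^p(w)}$ with the stated exponents $\alpha = 3$ (for $p>1$) and $\alpha = 1 + 2/p$ (for $p \le 1$). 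This reduction is clean; the analytic content is entirely in the sparse-form estimate, which I expect is proved earlier in the paper as a standalone result.

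For the sparse-form estimate the natural route is a good-$\lambda$ or Carleson-embedding argument adapted to $C_q$ weights. First I would recall that for $w \in C_q$ one has a Carleson-type testing inequality: $w(E) \lesssim [w]_{C_q}(|E|/|Q|)^{\eps_q}\int_{\R^n}(M1_Q)^q w$ for $E \subseteq Q$, with $\eps_q$ quantitatively controlled (this is essentially the definition of the $C_q$ constant, with the self-improvement giving an explicit $\eps_q \sim 1/([w]_{C_q}\cdots)$, or one uses $[w]_{C_q}$ directly). The core estimate is then a level-set inequality of the form $w(\{\mathcal{A}_{\mathcal{S}} f > 2\lambda, Mf \le \gamma \lambda\}) \le C \gamma^{\delta} [w]_{C_q}(\cdots) w(\{\mathcal{A}_{\mathcal{S}} f > \lambda\})$ for small $\gamma$, exploiting the sparseness of $\mathcal{S}$ to split the bad level set into disjoint stopping cubes and applying the $C_q$ testing condition on each. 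Integrating this good-$\lambda$ inequality against $\lambda^{p-1}\,d\lambda$ and optimizing in $\gamma$ produces the powers of $[w]_{C_q}$: the case $p \le 1$ is more delicate because the $L^p$ "norm" is only a quasi-norm, which is why the exponent degrades to $1 + 2/p$ and one picks up a factor involving $p$ that blows up as $p \to 0$ — but here $p \le 1 < q$ keeps things finite; the dependence $C_{n,p,q} \to \infty$ as $q \to p$ enters through $\eps_q^{-1}$ (the gap between $p$ and $q$ controls how much room the $C_q$ condition gives).

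The main obstacle, and where most of the work goes, is obtaining the good-$\lambda$ inequality with the correct \emph{quantitative} dependence on $[w]_{C_q}$ and on the gap $q - p$ simultaneously. One has to be careful that the $C_q$ testing constant is applied with the exponent $q$ (not $p$) inside the maximal function — this is exactly what forces $q > \max\{1,p\}$ — and that the logarithmic factor $\log([w]_{C_q} + e)$ arises from summing a geometric-type series over the scales of the sparse family (or from a Reverse-Hölder exponent that depends logarithmically on the characteristic). A secondary technical point is passing from the $(1,1)$-sparse form to the $(1,s)$-sparse form with $s$ slightly bigger than $1$, which is what the rough kernel sparse domination actually yields; one absorbs the extra integrability using that $M$ controls $M_s$ for $s$ close to $1$ at the cost of a harmless constant, or one proves the sparse-form bound directly for exponent $s$ near $1$ and lets $s \to 1$. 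Once the weighted sparse-form inequality is in hand with these exponents, plugging in the sparse domination of $T_\Omega$ and taking the supremum over sparse families gives Theorem~\ref{thm:main_result_rough} immediately, with $C_{n,p,q}$ inheriting the blow-up as $q \to p$ from the $C_q$ self-improvement step.
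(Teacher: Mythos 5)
The high-level plan---sparse domination plus a $C_q$-weighted sparse-form bound---matches the paper's outline, but there are two genuine gaps in the way you fill it in.

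First, the treatment of the $(1,s)$ sparse form is not a ``secondary technical point,'' and the claim that the extra integrability on the $g$-side can be absorbed ``at the cost of a harmless constant'' by controlling $M_s$ with $M$ is wrong. In the bilinear bound one tests against $gw$, so the inner average $\langle |g|w\rangle_{s,Q}$ must be split by H\"older into $\langle |g|^{sr}w\rangle_Q^{1/(sr)}\langle w^{(s-1/r)r'}\rangle_Q^{1/(sr')}$, and the second factor is only controllable via the Reverse H\"older inequality for $C_q$ weights (Theorem \ref{thm:rhi_cp}). That forces the choice $s-1\sim\delta\sim 1/[w]_{C_q}$, and the prefactor $s'$ in the Conde-Alonso--Culiuc--Di Plinio--Ou bound therefore contributes a full power $s'\sim[w]_{C_q}$, not a harmless constant. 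This is precisely where one of the three powers of $[w]_{C_q}$ in part I) comes from, and skipping it leaves the stated exponent unexplained.

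Second, and more seriously, the bilinear/duality route you propose cannot yield part II). For $0<p\le 1$ one cannot write $\|T_\Omega f\|_{L^p(w)}$ as $\sup_{\|g\|_{L^{p'}(w)}=1}\langle T_\Omega f,gw\rangle$, so the Conde-Alonso et al.\ form bound gives no access to $\|T_\Omega f\|_{L^p(w)}$. The paper needs a separate sparse domination of the \emph{power} $|T_\Omega f|^\theta$ against a $(\theta,s)$ sparse form (its Theorem \ref{thm:sparse_domination}), proved by adapting Lerner's $\mathscr{M}_T^\theta$ maximal-function machinery and the weak type $(1,1)$ of $T_\Omega$. Your proposal has no analogue of this step and would need one.

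Finally, as a point of comparison rather than a gap: you propose a good-$\lambda$ inequality as the engine for the sparse-form estimate, whereas the paper proceeds via Marcinkiewicz integral operators built on Whitney decompositions of level sets of $Mf$ (Lemmas \ref{lemma-Marcinkiewicz}--\ref{lemma-sparse} and Corollary \ref{Corollary-norm}), together with a Carleson-embedding argument (Theorem \ref{thm:carleson_embedding}) after splitting the sparse family into two subfamilies. A good-$\lambda$ approach is the classical route to Coifman--Fefferman-type inequalities (and was how Sawyer treated $C_{p+\lambda}$), so it is a plausible alternative, but you have not actually verified that it produces the quantitative exponents $3$ and $1+2/p$; the Marcinkiewicz route makes the $\log([w]_{C_q}+e)$ factor and the $q\to p$ blow-up appear transparently through Lemma \ref{lemma-Marcinkiewicz}, whereas in a good-$\lambda$ argument one would have to track the constants through the $\lambda$-integration and the $\gamma$-optimization and it is not obvious the same powers emerge.
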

We want to emphasize that the main novelty of this result is the qualitative estimates that (to the best of our knowledge) were not known earlier. We do not know if our bounds are sharp with respect to $[w]_{C_p}$ but we strongly suspect that they are not. We also note that previous proofs for the case $0 < p < 1$ and $w \in A_\infty$ used extrapolation theory which is not available for $C_p$ weights. Our method and quantitative bounds are new even for weights $w \in A_\infty$.

Our proof relies particularly on a recent sparse domination result of Conde-Alonso, Culiuc, Di Plinio and Ou:
\begin{theorem}[{\cite[part of Theorem A]{condealonsoculiucdiplinioou}}]
  \label{thm:condealonsoetal}
  Suppose that $T_\Omega$ is a rough homogeneous singular integral with $\Omega \in L^\infty(\Sb^{n-1})$ and $\int_{\Sb^{n-1}} \Omega \, d\sigma = 0$. Then, for any $1 < p < \infty$ we have
$$
    |\langle T_\Omega f,g \rangle| \le c_n p' \| \Omega \|_{L^\infty(\Sb^{n-1})} \sup_\Sc \sum_{Q \in \Sc} \langle |f| \rangle_{Q} \langle |g| \rangle_{p,Q},
$$
  where the supremum is taken over all sparse collections $\Sc$ (see Section \ref{section:definitions}).
\end{theorem}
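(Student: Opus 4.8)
The plan is to follow the sparse domination principle of Conde-Alonso, Culiuc, Di Plinio and Ou, whose two components are a scale-localized bilinear estimate for $T_\Omega$ and a stopping-time iteration that upgrades it to a sparse bound. \textbf{Step 1 (dyadic decomposition and $L^2$ bounds).} Fix a smooth partition of unity $1 = \sum_{k \in \Z}\varphi(2^{-k}t)$ with $\operatorname{supp}\varphi \subseteq [\tfrac12,2]$ and write $T_\Omega = \sum_{k\in\Z}T_k$, where $T_k$ has convolution kernel $K_k(y) = \frac{\Omega(y')}{|y|^n}\varphi(2^{-k}|y|)$. The classical Fourier-transform estimates of Duoandikoetxea and Rubio de Francia give, for some $\delta = \delta(n)>0$,
\begin{equation*}
  |\widehat{K_k}(\xi)| \lesssim_n \Norm{\Omega}{\infty}\min\{|2^k\xi|,|2^k\xi|^{-\delta}\}, \qquad |\widehat{K_k}(\xi)-\widehat{K_k}(\eta)| \lesssim_n \Norm{\Omega}{\infty}|2^k(\xi-\eta)|,
\end{equation*}
which, via a square-function/almost-orthogonality argument, yield $\Norm{\sum_{k\in F}T_k}{L^2\to L^2} \lesssim_n \Norm{\Omega}{\infty}$ \emph{uniformly} over finite $F\subseteq\Z$. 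One also records the trivial per-scale bound $\Norm{T_k}{L^1\to L^1} \le \int_{\R^n}|K_k| \lesssim_n \Norm{\Omega}{\infty}$.

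\textbf{Step 2 (the scale-localized bilinear estimate — the crux).} The heart of the matter is to prove that for every cube $Q$ and all $f,g$ supported in $Q$,
\begin{equation*}
  \Big|\Big\langle \sum_{2^k \le \ell(Q)} T_k f,\, g\Big\rangle\Big| \lesssim_n p'\,\Norm{\Omega}{\infty}\,\langle|f|\rangle_{Q}\,\langle|g|\rangle_{p,Q}\,|Q|.
\end{equation*}
This is obtained by interpolating the two bounds from Step~1: the uniform $L^2$ bound costs nothing, whereas summing the per-scale $L^1$ bounds over the $N \sim \log(\ell(Q)/\text{(finest relevant scale)})$ scales involved costs a factor linear in $N$. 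To prevent this logarithmic loss from surviving, one decomposes $f$ and $g$ according to the level sets of their local dyadic maximal functions on $Q$ before interpolating; on each level set the number of effective scales is controlled, and summing the resulting geometric series absorbs the logarithm. Optimizing the split between the $L^1$ and $L^2$ endpoints as $p\to 1^+$ is precisely what produces the factor $p'$. The complementary scales $2^k > \ell(Q)$ are handled by the grand maximal truncation operator
$$
  \mathcal{M}_{T_\Omega}f(x) := \sup_{Q'\ni x}\ \sup_{\xi,\eta\in Q'}\bigl|T_\Omega(f1_{\R^n\setminus 3Q'})(\xi) - T_\Omega(f1_{\R^n\setminus 3Q'})(\eta)\bigr|,
$$
which, by the weak $(1,1)$ bound for $T_\Omega$ (Christ, Christ--Rubio de Francia, Seeger), satisfies $\Norm{\mathcal{M}_{T_\Omega}}{L^1\to L^{1,\infty}} \lesssim_n \Norm{\Omega}{\infty}$; this lets the long-range contribution be absorbed into the $\langle|f|\rangle_Q$ term on a portion of $Q$ of large measure.

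\textbf{Step 3 (stopping-time iteration).} Fix a dyadic cube $Q_0$ and $f,g$ supported in $Q_0$. Select a maximal pairwise disjoint family $\{Q_j\}\subseteq\Ds(Q_0)$ of cubes on which $\langle|f|\rangle_{Q_j} > A\langle|f|\rangle_{Q_0}$ or $\langle|g|^p\rangle_{Q_j} > A\langle|g|^p\rangle_{Q_0}$, with $A=A(n)$ large enough that $\sum_j|Q_j| \le \tfrac12|Q_0|$. Combining Steps~1--2 and peeling off the interactions supported in $\bigcup_j Q_j$ gives
\begin{equation*}
  |\langle T_\Omega f, g\rangle| \lesssim_n p'\,\Norm{\Omega}{\infty}\,\langle|f|\rangle_{Q_0}\langle|g|\rangle_{p,Q_0}|Q_0| + \sum_j |\langle T_\Omega(f1_{Q_j}), g1_{Q_j}\rangle|,
\end{equation*}
and iterating, then collecting the top cubes of all the sub-problems, produces a sparse family $\Sc\subseteq\Ds(Q_0)$ realizing the bound. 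A standard reduction to finitely many shifted dyadic systems, together with a limiting argument over compactly supported $f,g$, removes the restriction that the supports lie in a single cube and yields the stated inequality.

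\textbf{Main obstacle.} The delicate step is Step~2. Since $\Omega$ carries no smoothness there is no Calder\'on--Zygmund cancellation to exploit, so the scale-localized estimate must be squeezed out of the tension between the scale-uniform $L^2$ bound and the linearly growing $L^1$ bound of Step~1. Arranging the interpolation — the maximal-function level-set decomposition and the geometric summation that kills the logarithm — so that it simultaneously produces the correct $L^1\times L^p$ structure and the sharp power $p'$ is the technical core; the remaining recursion in Step~3 is by now standard.
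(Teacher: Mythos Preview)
The paper does not prove this theorem at all: it is quoted from \cite{condealonsoculiucdiplinioou} (with \cite{lerner_weak-type} mentioned as an alternative route) and used purely as a black box to feed into Theorem~\ref{thm:main_sparse_bounds}. There is therefore no ``paper's own proof'' to compare your attempt against.

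As for the sketch itself, the overall architecture --- Littlewood--Paley decomposition of the kernel, a scale-localized bilinear estimate, then a stopping-time recursion --- is indeed that of \cite{condealonsoculiucdiplinioou}, but your Step~2 blends ingredients from two different proofs. The grand maximal truncation operator $\mathcal{M}_{T_\Omega}$ you invoke for the long-range scales is Lerner's device from \cite{lerner_weak-type}, not part of the Conde-Alonso--Culiuc--Di Plinio--Ou argument; in \cite{condealonsoculiucdiplinioou} the tail scales are handled within the bilinear recursion itself, using that the kernel pieces $T_k$ with $2^k>\ell(Q)$ are essentially constant on $Q$ and can be absorbed into the stopping data of the parent cube. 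Conversely, your description of how the logarithmic loss is killed (``level sets of the local dyadic maximal functions \ldots geometric summation'') is a rough paraphrase of the actual mechanism in \cite{condealonsoculiucdiplinioou}, which interpolates an exponentially decaying $L^2$ bound (coming from cancellation of $\widehat{K_k}$ against Littlewood--Paley projections) with the trivial $L^1$ bound, and then sums a genuinely convergent series rather than balancing level sets. The sketch is serviceable as a roadmap but is not an accurate account of either proof on its own.
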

An alternative approach for this result can be found in \cite{lerner_weak-type}. Thus, instead of working directly with rough homogeneous singular integrals, we use Theorem \ref{thm:condealonsoetal} to reduce the question to proving bounds for sparse forms:
\begin{theorem}
  \label{thm:main_sparse_bounds}
  Let $\Lambda = \Lambda_\Sc^{t,\gamma}$ be the sparse form defined as
 $$
    \Lambda(f,g) \coloneqq (t')^\gamma \sum_{Q \in \Sc} \langle |f| \rangle_{Q}^\gamma \langle |g|\rangle_{t,Q}|Q|,
$$
  where $\Sc$ is a sparse collection of cubes, $t > 1$ and $0 < \gamma \le 1$.
  \begin{enumerate}
    \item[I)] Suppose that $1 < p < q < \infty$ and $w \in C_q$. Then there exists
              $1 < s < 2$ such that
           $$
                \Lambda_\Sc^{s,1}(f,gw) \le C_{n,p,q}   \big([w]_{C_q} + 1 \big)^{3} \log\big( [w]_{C_q} + e \big)  \|Mf\|_{L^p(w)} \|g\|_{L^{p'}(w)}.
      $$

    \item[II)] Suppose that $0 < p \le 1<q<\infty$ and $w \in C_q$. Then  there exists $1 < s < \min\{2,\frac{1}{1-p}\}$ such that
             $$
                 \Lambda_\Sc^{s,p}(f,w) \le C_{n,p,q} \big([w]_{C_q} + 1 \big)^{ p+ 2} \log \big( [w]_{C_q} + e \big) \|Mf\|_{L^p(w)}^p .
              $$
  \end{enumerate}
  The constant $C_{n,p,q}$ satisfies $C_{n,p,q} \to \infty$ as $q \to p$.
\end{theorem}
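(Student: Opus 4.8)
The plan is to prove the two sparse-form bounds of Theorem~\ref{thm:main_sparse_bounds} and then read off Theorem~\ref{thm:main_result_rough} from Theorem~\ref{thm:condealonsoetal}: in case~I by $L^{p}(w)$--$L^{p'}(w)$ duality, in case~II after the standard reduction of $\|T_\Omega f\|_{L^{p}(w)}^{p}$ (for $0<p\le1$) to a sparse form with exponent $\gamma=p$. Both sparse-form bounds are obtained by the same mechanism, which combines a quantitative self-improvement of the $C_q$ condition with a stopping-time organization of the sparse family against the level sets of $Mf$. The object that throughout replaces the Lebesgue average of $w$ is the ``$C_q$-average'' $\frac{1}{|Q|}\int_{\R^n}(M1_Q)^{q}w$ that appears on the right of \eqref{cond:c_p} and which dominates $\langle w\rangle_Q$.

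First I would isolate a quantitative reverse-Hölder inequality for $C_q$ weights: there is an exponent $s>1$ with $s-1\gtrsim_{n}\bigl(([w]_{C_q}+1)\log([w]_{C_q}+e)\bigr)^{-1}$ --- so that $s$ may be kept below $2$, and below $\tfrac{1}{1-p}$ when $p\le1$ --- for which
\[
  \langle w\rangle_{s,Q}=\Bigl(\frac{1}{|Q|}\int_{Q}w^{s}\Bigr)^{1/s}\le C_{n}\,([w]_{C_q}+1)\,\frac{1}{|Q|}\int_{\R^n}(M1_Q)^{q}w\qquad\text{for every cube }Q.
\]
This is the ``reverse-Hölder phenomenon for $C_p$ weights'' mentioned in the introduction; I would quote it, or reprove it by running a Calderón--Zygmund stopping time inside $Q$ whose stopping generations are estimated through \eqref{cond:c_p}. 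Its role is twofold: it removes the $L^{s}$-average of the weight in $\Lambda^{s,\gamma}$, and the prefactor $s'$ it forces --- entering $\Lambda$ through $(t')^{\gamma}$ --- is exactly what produces the logarithm $\log([w]_{C_q}+e)$ in the final estimates.

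Second, since an arbitrary sparse family has unboundedly many cubes, the weighted sum cannot be taken naively; I would group the cubes of $\Sc$ according to the level sets $\Omega_{k}=\{Mf>2^{k}\}$ (that is, by the dyadic value of $\langle|f|\rangle_Q$), pass to the maximal subcubes of each $\Omega_{k}$, and iterate. The sets obtained by deleting from each such cube the next generation are pairwise disjoint, and applying \eqref{cond:c_p} to them yields a geometric gain across generations; summing this series --- together with a companion series in the dilation parameter $j$ coming from $\int_{\R^n}(M1_Q)^{q}w\approx\sum_{j\ge0}2^{-jnq}w(2^{j+1}Q)$ and the transfer $\langle|f|\rangle_Q\lesssim2^{jn}\inf_{2^{j+1}Q}Mf$, which converges because $q>p$ --- and then collapsing the telescoping sum over $k$ via $\sum_{k}2^{k\gamma}1_{\Omega_{k}}\lesssim(Mf)^{\gamma}$, one arrives at a bound by $\int_{\R^n}(Mf)^{\gamma}w$ in which $[w]_{C_q}$ enters through the ratio of the geometric series and through the number of generations needed before its decay takes over. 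With these two ingredients, case~II follows by replacing $\langle w\rangle_{s,Q}$ in $\Lambda_{\Sc}^{s,p}(f,w)$ using the reverse-Hölder inequality and running the stopping-time estimate with $\gamma=p$; since $p\le1$ gives $(s')^{p}\le s'\lesssim([w]_{C_q}+1)\log([w]_{C_q}+e)$, the surviving powers of $[w]_{C_q}+1$ collect to $([w]_{C_q}+1)^{p+2}\log([w]_{C_q}+e)$. Case~I is the same scheme carrying the dual function: after the reverse-Hölder inequality, Hölder's inequality inside each cube splits $\langle|gw|\rangle_{s,Q}$ into a factor governed by $\langle|g|^{p'}w\rangle_Q$ and a power of the $C_q$-average; the first is handled by the stopping-time estimate for $f$, while the second --- kept attached to the disjoint subsets furnished by sparseness --- telescopes to $\|g\|_{L^{p'}(w)}$, and bookkeeping of the exponents yields $([w]_{C_q}+1)^{3}\log([w]_{C_q}+e)$.

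I expect the main obstacle to be making the stopping-time estimate quantitative with the correct dependence on $[w]_{C_q}$. Because $C_q\not\subseteq A_\infty$ and $\Sc$ is arbitrary, the usual shortcuts are unavailable: dominating $\Lambda_{\Sc}^{s,1}(f,gw)$ by $\int Mf\cdot M_{s}(gw)$ would require $w\in A_{p}$, and dominating the weighted sum by the integral of a maximal function would require a local integrability that fails; all control must instead be extracted from the $(|E|/|Q|)^{\eps}$-gain in \eqref{cond:c_p} together with the gap $q>p$, and it is this interplay that pins down the exponents $3$ and $p+2$ and the logarithm. A secondary technical point --- responsible for the restriction $s<\tfrac{1}{1-p}$ when $p\le1$ --- is the compatibility of the Hölder exponents used to separate $g$ from $w$ in case~I and, in case~II, of the analogous exponents in the reverse-Hölder step.
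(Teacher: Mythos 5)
Your plan correctly identifies the two structural pillars the paper uses: a quantitative reverse H\"older inequality for $C_q$ weights (Theorem~\ref{thm:rhi_cp}) and a stopping--time organization of the sparse family against the level sets $\{Mf>2^k\}$ (which, in the paper, is packaged as Corollary~\ref{Corollary-norm} via the Marcinkiewicz integral operators of Section~\ref{section:Marcinkiewicz}). Part~II is then essentially the direct application of these two ingredients, and your sketch captures that. Two points, however, keep this from closing.

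First, and more seriously, the handling of $g$ in Part~I has a genuine gap. After the H\"older/reverse-H\"older step you propose to reduce the $g$-dependence to a factor like $\langle |g|^{p'} w\rangle_Q^{1/p'}$, and you assert that this contribution ``telescopes to $\|g\|_{L^{p'}(w)}$'' because of the pairwise disjoint sets $E_Q$ from sparseness. This does not work: a sparse family can contain arbitrarily deep chains of nested cubes, so $\sum_{Q\in\Sc}\int_Q |g|^{p'}w$ (or its appropriate weighted variant) is in general infinite, and one cannot pass from $\int_Q$ to $\int_{E_Q}$ without an $A_\infty$-type estimate $w(Q)\lesssim w(E_Q)$ --- precisely the kind of control that $C_q$ weights lack. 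The paper avoids this by first splitting $\Sc$ into $\Sc_1$ (cubes where the $f$-average dominates, sent to Corollary~\ref{Corollary-norm}) and $\Sc_2$, and then handling $\Sc_2$ via the dyadic Carleson embedding theorem (Theorem~\ref{thm:carleson_embedding}) with
\[
a_Q \;=\; w(Q)\Bigl(\frac{w(Q)}{\int_{\R^n}(M1_Q)^q w}\Bigr)^{\frac{p'}{sr}-1},
\]
whose Carleson condition is itself established through Lemma~\ref{summation-sparse-lemma}; this is exactly where the $C_q$ condition enters the $g$-side of the argument, and that is the missing idea in your proposal. Without the Carleson embedding (or an equivalent substitute for it), the $g$-sum does not converge.

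Second, a smaller misattribution: you place a factor $\log([w]_{C_q}+e)$ inside the reverse-H\"older exponent and credit $s'$ for producing the logarithm in the theorem. In the paper the reverse-H\"older exponent is $\delta\approx ([w]_{C_q}+1)^{-1}$ with no logarithm, so $s'\approx ([w]_{C_q}+1)$, and the logarithm arises from the Marcinkiewicz integral estimate (Lemma~\ref{lemma-Marcinkiewicz}), i.e.\ from the stopping-time machinery, not from $s'$. Your alternative ``$j$-series plus transfer $\langle|f|\rangle_Q\lesssim 2^{jn}\inf_{2^{j+1}Q}Mf$'' mechanism is a plausible replacement for the Marcinkiewicz integral in Part~II, though it too would need care with overlaps of the dilated cubes $2^{j}Q$; in any case the constants should be re-derived from that mechanism rather than from the reverse-H\"older exponent.
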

Part I) of Theorem \ref{thm:main_result_rough} follows from Theorem \ref{thm:condealonsoetal} and part I) of Theorem \ref{thm:main_sparse_bounds} in a very straightforward way but for part II)  we need some additional considerations. In particular, we need to modify some results proven by Lerner \cite{lerner_weak-type} and prove a variation of the sparse domination result for the case $0 < p < 1$ (see Theorem \ref{thm:sparse_domination}).

We note that in \cite{condealonsoculiucdiplinioou}, the authors proved similar sparse domination results also for other classes of operators, namely rough homogeneous singular integrals $T_\Omega$ with more general kernel functions $\Omega$ and Bochner--Riesz means. Their results combined with Theorem \ref{thm:main_sparse_bounds} give $C_q$-Coifman--Fefferman estimates also for these operators for $1 \le p < \infty$ but for simplicity, we only consider the operators $T_\Omega$ with $\Omega \in L^\infty(\Sb^{n-1})$ satisfying $\int_{\Sb^{n-1}} \Omega \, d\sigma = 0$.

Our second goal is to continue the structural analysis of $C_p$ classes started particularly by Buckley \cite[Section 7]{buckley}. We consider:
\begin{enumerate}
  \item[i)] weak self-improving properties of $C_p$ weights,
  \item[ii)] weak and dyadic $C_p$ classes,
  \item[iii)] examples of $C_p$ weights,
  \item[iv)] $C_\psi$ classes of Lerner \cite{lerner_remarks},
  \item[v)] generalizations of the Kahanp\"a\"a--Mejlbro counterexample \cite{kahanpaamejlbro}.
\end{enumerate}
As a corollary of our considerations we are able to give a new proof for the fact that the $C_{p+\lambda}$ condition is not necessary for \eqref{ineq:coifman-fefferman} in any dimension; see Corollary \ref{cor:not_nec} and Theorem \ref{thm:anyd}.

Our motivation for this analysis comes particularly from the fact that most known $C_p$ techniques are heavy or very restricted and this is mainly because not many characterizations and non-trivial properties of $C_p$ weights are known. Naturally, the $C_p$ theory cannot be as comprehensive and rich as the $A_p$ theory because the $C_p$ classes are much bigger than the $A_p$ classes. However, existing results already show that at least some parts of the $A_p$ theory have counterparts in the $C_p$ world.

This paper is structured as follows. First, we introduce the notation and some definitions that will be used throughout the paper. In Section \ref{section:Marcinkiewicz} we present some $C_p$ techniques that will be useful for us later. Sections \ref{section:proof_part_i} and \ref{section:proof_part_ii}  are devoted to the proofs of Theorems \ref{thm:main_result_rough} and \ref{thm:main_sparse_bounds}. In Section \ref{section:sparse} we state and prove a sparse domination result for rough singular integrals that is useful in the range $0<p\leq 1$. In Sections \ref{section:RH} and \ref{section:weak_cp} we consider structural properties of $C_p$ classes and new classes weak $C_p$ and dyadic $C_p$, which are actually equal to $C_p$. Finally, in Section \ref{section:kahanpaamejlbro} we revisit results of Kahanp\"a\"a and Mejlbro \cite{kahanpaamejlbro} and Lerner \cite{lerner_remarks} to show that there exists a weight class $\widetilde{C}_p$ such that $\bigcup_{q > p} C_q \subsetneq \widetilde{C}_p \subsetneq C_p$ and the $\widetilde{C}_p$ condition implies \eqref{ineq:coifman-fefferman}. Unlike some earlier considerations related to this topic, our ideas work in any dimension.

\section{Notation and definitions}
\label{section:definitions}

We use the following notation and terminology in the paper.
\begin{enumerate}
  \item[$\bu$] The letters $c$ and $C$ denote constants that depend only on the dimension and
               other similar parameters. We call them \emph{structural constants}.
               The values of $c$ and $C$ may change from one occurence to another. In most cases, we do
               not track how our bounds depend on these constants and usually just write
               $\gamma_1 \lesssim \gamma_2$ if $\gamma_1 \le c\gamma_2$ for a structural
               constant $c$ and $\gamma_1 \approx \gamma_2$ if $\gamma_1 \lesssim \gamma_2 \lesssim \gamma_1$.
               If the constant $c_\kappa$ depends only on structural constants and some other parameter
               $\kappa$ and $\gamma_1 \lesssim c_\kappa \gamma_2$, we write $\gamma_1 \lesssim_\kappa \gamma_2$.

  \item[$\bu$] The Lebesgue measure of a measurable set $E \subset \R^n$ is denoted by $|E|$.

  \item[$\bu$] The characteristic function of a set $E$ is denoted by $1_E$.

  \item[$\bu$] A \emph{weight} is a non-negative locally Lebesgue integrable function
               that is non-zero in a set of positive measure.

  \item[$\bu$] Suppose that $f$ is a locally integrable function, $w$ is a weight, $E \subset \R^n$
               is a measurable set with $|E| > 0$ and $0 < p < \infty$. We denote
               \begin{align*}
                 w(E) &\coloneqq \int_E w \, dx,\quad
                 \langle f \rangle_E := \avgint_E f \, dx \coloneqq \frac{1}{|E|} \int_E f \, dx, \quad  \langle |f| \rangle_{p,E} := \langle |f|^p \rangle_E^{1/p}\\
                 \langle f \rangle_{E}^w &\coloneqq \frac{1}{w(E)} \int_E fw \, dx.
                 %
               \end{align*}

  \item[$\bu$] For any cube $Q$   we denote the
               side length of $Q$  by $\ell(Q)$.

  \item[$\bu$] The \emph{collection of dyadic cubes} $\Ds$ is defined as
               $$
                 \Ds \coloneqq \{2^{-k} ( [0,1)^n + z ) \colon k \in \Z, z \in \Z^n \}.
             $$

  \item[$\bu$] Suppose that $0 < \gamma < 1$. A \emph{$\gamma$-sparse collection} $\Sc$ is a
               collection of cubes such that for every $Q \in \Sc$ there exists a $E_Q \subset Q$ such that
               \begin{enumerate}
                 \item[(1)] $|E_Q| \ge \gamma |Q|$,
                  \item[(2)] if $Q, Q' \in \Sc$ and $Q \neq Q'$, then $E_Q \cap E_{Q'} = \emptyset$.
               \end{enumerate}
               In most cases, we assume that $\gamma = \tfrac{1}{2}$ and do not specifically mention this every time. We note that we do not require our sparse collections to be subcollections of $\Ds$ but any $\gamma$-sparse collection can be embedded inside a bounded number of dyadic $\tfrac{\gamma}{6^n}$-sparse collections. This follows from e.g. \cite[Lemma 2.5]{hytonenlaceyperez}.

  \item[$\bu$] We say that a function $f \colon \R^n \to \R$ is \emph{lower semicontinuous} if the set $\{x \in \R^n \colon f(x) > \lambda\}$ is open for every $\lambda \in \R$.
\end{enumerate}

\subsection{$A_p$, $A_\infty$, $C_p$ and Reverse H\"older weights}
\label{section:weights}

Suppose that $w$ is a weight. We denote $w \in A_p$ for $1 < p < \infty$ if
$$
 [w]_{A_p} \coloneqq \sup_Q \Big( \avgint_Q w \Big) \Big( \avgint_Q w^{-\frac{1}{p-1}} \Big)^{p-1} < \infty,
$$
where the supremum is taken over all cubes. We say that $w$ satisfies a \emph{$q$-Reverse H\"older inequality}
for $1 < q < \infty$ and denote $w \in RH_q$ if there exists a constant $[w]_{RH_q} < \infty$ such that
$$
  \Big( \avgint_Q w^q \Big)^{1/q} \le [w]_{RH_q} \avgint_Q w
$$
for every cube $Q$. We denote $w \in A_\infty$ if the \emph{Fujii--Wilson constant} of $w$ is finite, i.e.
$$
  [w]_{A_\infty} \coloneqq \sup_Q \frac{1}{w(Q)} \int_Q M(1_Q w) < \infty.
$$
By \cite{coifmanfefferman, fujii, wilson}, we know that the following conditions are equivalent:
\begin{enumerate}
  \item[i)] $w \in A_\infty$,
  \item[ii)] $w \in \bigcup_{p > 1} A_p$,
  \item[iii)] $w \in \bigcup_{q > 1} RH_q$,
  \item[iv)] there exist constants $C, \eps > 0$ such that for every cube $Q$ and every measurable set $E \subseteq Q$ we have
             \begin{equation}
               \label{defin:a_infty}
               w(E) \le C \Big( \frac{|E|}{|Q|} \Big)^\eps w(Q).
             \end{equation}
\end{enumerate}
Recall from \eqref{cond:c_p} that $w \in C_p$ for $1 < p < \infty$ if there exist constants $C, \eps > 0$ such that for every cube $Q$ and every measurable set $E \subseteq Q$ we have
$$
  w(E) \le C \Big( \frac{|E|}{|Q|} \Big)^\eps \int_{\R^n} (M1_Q)^p w.
$$
Recently, the first author \cite{canto} introduced a Fujii--Wilson type $C_p$ characteristic. Let $w \in C_p$.
\begin{enumerate}
 \item[$\bu$] If $\int_{\R^n} (M1_Q)^p w = \infty$ for some (and thus, all) cubes $Q$, we set $[w]_{C_p} = 0$.
 \item[$\bu$] If $0 < \int_{\R^n} M(1_Q)^p w < \infty$ for some (and thus, all) cube $Q$, we set
             $$
                [w]_{C_p} = \sup_Q \frac{1}{\int_{\R^n} (M1_Q)^p w} \int_Q M(1_Q w).
             $$
\end{enumerate}

\begin{theorem}[{\cite[Section 2]{canto}}]
  \label{thm:c_p_constant}
  Let $1 < p < \infty$. We have $w \in C_p$ if and only if $[w]_{C_p} < \infty$.
\end{theorem}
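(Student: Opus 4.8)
The statement is an equivalence, so the plan splits into two implications, and I would begin by disposing of the degenerate case: if $\int_{\R^n}(M1_Q)^p w=\infty$ for one (hence every) cube $Q$, then the right-hand side of \eqref{cond:c_p} is infinite, so the $C_p$ condition holds trivially, while by convention $[w]_{C_p}=0<\infty$; both sides of the equivalence are then vacuously true. So from now on I assume $0<I_Q\coloneqq\int_{\R^n}(M1_Q)^p w<\infty$ for every cube $Q$, and I record that $w(Q)\le\int_Q(M1_Q)^pw\le I_Q$.

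For the implication ``$w\in C_p\Rightarrow[w]_{C_p}<\infty$'' I would fix a cube $Q$ and estimate $\int_Q M(1_Qw)$ by the layer-cake formula. A Vitali covering argument applied to the cubes realizing $M(1_Qw)(x)>t$ gives the two elementary bounds $|\{x\in Q:M(1_Qw)(x)>t\}|\le 5^n t^{-1}w(Q)$ and $t\,|\{x\in Q:M(1_Qw)(x)>t\}|\le 5^n\,w(\{x\in Q:M(1_Qw)(x)>t\})$. Writing $\Omega_k\coloneqq\{x\in Q:M(1_Qw)(x)>2^k\langle w\rangle_Q\}$, the first gives $|\Omega_k|\le 5^n 2^{-k}|Q|$, and after splitting $\int_{\langle w\rangle_Q}^{\infty}$ dyadically the second yields $\int_Q M(1_Qw)\le C_n\bigl(w(Q)+\sum_{k\ge1}w(\Omega_k)\bigr)$. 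Now I would invoke the $C_p$ inequality on each $\Omega_k\subseteq Q$: $w(\Omega_k)\le C(|\Omega_k|/|Q|)^\eps I_Q\le C(5^n2^{-k})^\eps I_Q$. Summing the geometric series and using $w(Q)\le I_Q$ produces $\int_Q M(1_Qw)\le C_{n,C,\eps}\,I_Q$ with a constant independent of $Q$, hence $[w]_{C_p}\le C_{n,C,\eps}<\infty$.

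The reverse implication ``$[w]_{C_p}<\infty\Rightarrow w\in C_p$'' is the substantial one. The first point is that the hypothesis localizes: for any subcube $R\subseteq Q$ one has $1_R\le 1_Q$, hence $M1_R\le M1_Q$ and $I_R\le I_Q$, so $\int_R M(1_Rw)\le[w]_{C_p}I_R\le[w]_{C_p}I_Q$ for every $R\subseteq Q$, with a \emph{fixed} right-hand side. I would then try to upgrade this uniform, localized Fujii--Wilson-type bound to the power-type $C_p$ estimate by mimicking the classical argument that a finite Fujii--Wilson $A_\infty$ constant forces the quantitative $A_\infty$ condition \eqref{defin:a_infty}: given $E\subseteq Q$, run a Calder\'on--Zygmund/stopping-time iteration on $1_E$ so that the relative measure of $E$ decays geometrically, while simultaneously controlling the local density of $w$ through the localized bound above, losing at each step a fixed multiplicative factor depending only on $n$ and $[w]_{C_p}$; after $\approx\log(|Q|/|E|)$ steps this should land on $w(E)\le C(|E|/|Q|)^\eps I_Q$ with $\eps\approx 1/(c_n[w]_{C_p})$. (For $|E|\ge\tfrac12|Q|$ the inequality is immediate from $w(E)\le w(Q)\le I_Q$.)

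The main obstacle is exactly this last implication. One cannot simply quote the classical $A_\infty$ self-improvement: a $C_p$ weight need not be $A_\infty$, not even after localizing to $Q$ (since $w$ may vanish on large subsets of $Q$), and $I_Q=\int_{\R^n}(M1_Q)^pw$ is not additive in $Q$, so the covering and summation bookkeeping of the standard proof has to be redone by hand. The two structural facts that make the adaptation feasible are precisely $I_R\le I_Q$ for $R\subseteq Q$ and $w(Q)\le I_Q$, and the delicate part is to propagate them through the iteration so that the exponent $\eps$ and the implied constant depend only on $n$ and $[w]_{C_p}$ (with $C_{n,p,q}\to\infty$ in the intended regime). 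A cleaner-looking route would be to first establish a reverse-H\"older-type inequality $\bigl(\avgint_Q w^s\bigr)^{1/s}\le C\,|Q|^{-1}I_Q$ for some $s=s(n,[w]_{C_p})>1$ and then obtain the power bound by H\"older's inequality; the difficulty is that the usual stopping-cube proof of reverse H\"older breaks down for $C_p$ weights -- the weighted averages along a stopping tree can grow faster than the corresponding measures decay -- so this reduction itself demands a genuinely new idea rather than a transcription of the $A_\infty$ argument.
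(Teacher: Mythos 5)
Your treatment of the degenerate case ($I_Q=\infty$) and of the forward implication ($w\in C_p\Rightarrow[w]_{C_p}<\infty$) is essentially correct and follows the standard route: the weak $(1,1)$ bound for the Hardy--Littlewood maximal operator controls the size of the level sets of $M(1_Qw)$, the $C_p$ condition gives geometric decay of $w(\Omega_k)$, and summing gives $\int_QM(1_Qw)\lesssim I_Q$ uniformly in $Q$. Nothing to object to there.

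The genuine gap is the reverse implication, and you say so yourself: you correctly identify the intended route (establish a Reverse H\"older inequality of the form $\langle w^{1+\delta}\rangle_Q^{1/(1+\delta)}\lesssim|Q|^{-1}I_Q$ and then pass to the power estimate via H\"older --- this is exactly Theorem~\ref{thm:rhi_cp} of the present paper), but you conclude that the stopping-time proof of that inequality ``demands a genuinely new idea'' and do not supply it. A proof proposal that stops short of the hard half of an iff statement is not a proof. Concretely, the concern you raise --- that weighted averages along a stopping tree may grow faster than measures decay --- can be resolved, and the ingredients are already in your hands. The monotonicity $\int_{\R^n}(M1_R)^pw\le\int_{\R^n}(M1_Q)^pw$ for $R\subseteq Q$ means that the Fujii--Wilson-type bound $\int_RM(1_Rw)\le[w]_{C_p}\,I_R\le[w]_{C_p}\,I_Q$ holds for \emph{every} subcube $R$ of $Q$ with a right-hand side that does not depend on $R$. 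One then runs the Calder\'on--Zygmund stopping procedure inside $Q$ not at thresholds $a^k\langle w\rangle_Q$ but at thresholds $a^k\Lambda_Q$ with $\Lambda_Q=|Q|^{-1}I_Q$; since $\langle w\rangle_Q\le\Lambda_Q$, the decomposition starts at generation zero, the local maximal-function estimate $\int_Q M(1_Qw)\le[w]_{C_p}I_Q$ controls the combined measure of the stopping cubes at each generation, and the usual self-improvement/Carleson summing yields the Reverse H\"older inequality with $\delta\approx1/(c_n[w]_{C_p})$. In short, the ``new idea'' is just to replace the role of $w(Q)$ in the classical $A_\infty$ argument by the tail quantity $I_Q$ throughout, which works precisely because of the monotonicity and the a priori bound $w(R)\le I_Q$ that you have already isolated. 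Without carrying this out, your proposal establishes only one direction of the equivalence.
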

We note that although $[w]_{A_\infty} \ge 1$ for any $w \in A_\infty$, the characteristic $[w]_{C_p}$ can be arbitrarily small \cite[Example 2.11]{canto}. When comparing the definition \eqref{cond:c_p} and the characterization in Theorem \ref{thm:c_p_constant}, it was noted in \cite[Remark 2.16]{canto} that if $0 \le [w]_{C_p} < \infty$, then \eqref{cond:c_p} holds for $C = 2$ and
\begin{equation}
  \label{quantity:eps_cp}
  \eps = \frac{1 - 2^{-n(p-1)}}{2^{2np + 3n}(20)^n} \min\{1,[w]_{C_p}^{-1}\}.
\end{equation}
In particular, the constant $C$ in the definition \eqref{cond:c_p} is fairly irrelevant.

\begin{remark}\label{Remark-finiteness}
If $[w]_{C_q}=0$, then $\int_{\R^n} (Mf)^pw= \infty$ for all $p\leq q$ and all nonzero function $f$. Since most of the estimates we are going to prove have $\| Mf\|_{L^p(w)}$ on the right hand side, we may always assume without loss of generality that $[w]_{C_q}>0$.
\end{remark}

The first author also proved a quantitative Reverse H\"older type estimate for $C_p$ weights (see also \cite[Section 8.1]{auscherbortzegertsaari} and \cite[Lemma 7.7]{buckley} for qualitative Reverse H\"older type estimates for $C_p$ weights):

\begin{theorem}[{\cite[Section 2]{canto}}]
  \label{thm:rhi_cp}
  Suppose that $1 < p < \infty$ and $w \in C_p$. Then there exists a constant $B = B(n,p)$ such that
  \begin{equation}
    \label{ineq:reverse_holder_cp}
    \Big( \avgint_Q w^{1+\delta} \Big)^{\frac{1}{1+\delta}} \le \frac{C}{|Q|} \int_{\R^n} (M1_Q)^p w
  \end{equation}
  for a structural constant $C$, every cube $Q$ and every $0 < \delta \le \tfrac{1}{B \max\{[w]_{C_p},1\}}$. Conversely, if there exist constants $C > 0$ and $\delta > 0$ such that \eqref{ineq:reverse_holder_cp} holds for every cube $Q$, then $w \in C_p$.
\end{theorem}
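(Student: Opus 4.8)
The plan is to prove the two implications separately by direct computation. For ``$w\in C_p\Rightarrow$ \eqref{ineq:reverse_holder_cp}'' I would run a layer-cake (distribution-function) argument fed by the quantitative $C_p$ inequality \eqref{cond:c_p}--\eqref{quantity:eps_cp}; the converse is essentially a single application of Hölder's inequality. So fix a cube $Q$, assume $0<[w]_{C_p}<\infty$ (if $[w]_{C_p}=0$ the right-hand side of \eqref{ineq:reverse_holder_cp} is infinite and there is nothing to prove), and abbreviate $W:=\int_{\R^n}(M1_Q)^pw$, so that $w(Q)\le W<\infty$. Writing $w^{1+\delta}=w\cdot w^\delta$ and $w(x)^\delta=\delta\int_0^{w(x)}\lambda^{\delta-1}\,d\lambda$, Fubini gives
$$\int_Q w^{1+\delta}=\delta\int_0^\infty\lambda^{\delta-1}\,w\big(Q\cap\{w>\lambda\}\big)\,d\lambda.$$

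The idea is then to split this integral at the threshold $\lambda_0:=W/|Q|$, which is $\ge\langle w\rangle_Q$. On $(0,\lambda_0)$ I would use only $w(Q\cap\{w>\lambda\})\le w(Q)\le W$, contributing at most $W\lambda_0^\delta=W^{1+\delta}|Q|^{-\delta}$. On $(\lambda_0,\infty)$ I would combine two elementary estimates: the Chebyshev bound $|Q\cap\{w>\lambda\}|\le\lambda^{-1}w(Q)\le\lambda^{-1}W$, and the quantitative $C_p$ inequality \eqref{cond:c_p} with $C=2$ and $\eps$ as in \eqref{quantity:eps_cp}, applied to $E=Q\cap\{w>\lambda\}\subseteq Q$, which yields $w(Q\cap\{w>\lambda\})\le 2W^{1+\eps}(\lambda|Q|)^{-\eps}$. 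Substituting, the tail contributes $2\delta W^{1+\eps}|Q|^{-\eps}\int_{\lambda_0}^\infty\lambda^{\delta-\eps-1}\,d\lambda=\frac{2\delta}{\eps-\delta}\,W^{1+\delta}|Q|^{-\delta}$, where the $\lambda$-integral converges exactly when $\delta<\eps$ and the equality uses $\lambda_0=W/|Q|$. Restricting to $\delta\le\eps/2$ bounds the prefactor $\frac{2\delta}{\eps-\delta}$ by $2$, so altogether $\int_Q w^{1+\delta}\le 3\,W^{1+\delta}|Q|^{-\delta}$; dividing by $|Q|$ and taking the $(1+\delta)$-th root gives \eqref{ineq:reverse_holder_cp} with $C=3$. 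Finally, the condition $\delta\le\eps/2$ is, by \eqref{quantity:eps_cp}, precisely $\delta\le\frac1{B\max\{[w]_{C_p},1\}}$ for an explicit $B=B(n,p)$.

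For the converse, suppose \eqref{ineq:reverse_holder_cp} holds for every cube with some $C,\delta>0$, and let $E\subseteq Q$. Hölder's inequality with exponents $1+\delta$ and $(1+\delta)/\delta$ gives
$$w(E)=\int_E w\le\Big(\int_Q w^{1+\delta}\Big)^{\frac1{1+\delta}}|E|^{\frac\delta{1+\delta}}=|Q|^{\frac1{1+\delta}}\Big(\avgint_Q w^{1+\delta}\Big)^{\frac1{1+\delta}}|E|^{\frac\delta{1+\delta}}\le C\Big(\frac{|E|}{|Q|}\Big)^{\frac\delta{1+\delta}}\int_{\R^n}(M1_Q)^pw,$$
which is exactly \eqref{cond:c_p} with exponent $\eps=\delta/(1+\delta)$; hence $w\in C_p$ by Theorem \ref{thm:c_p_constant}.

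I expect the only real care to be needed in bookkeeping the powers of $|Q|$ and $W$: the threshold must be chosen as $\lambda_0=W/|Q|$ precisely so that the two pieces of the split both collapse to $W^{1+\delta}|Q|^{-\delta}$ (any other normalization would leave a residual power of $|Q|$ or $W/|Q|$). Conceptually it is worth noting what is \emph{not} needed --- no self-improvement of the $C_p$ class, no Calderón--Zygmund stopping-time decomposition --- because the $C_p$ inequality is applied directly on the fixed cube $Q$ and Chebyshev already controls $|\{w>\lambda\}\cap Q|$; this is also the reason the admissible range of $\delta$ comes out proportional to $1/\max\{[w]_{C_p},1\}$.
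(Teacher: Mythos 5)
Your proof is correct. Note first that the paper at hand does not actually prove Theorem~\ref{thm:rhi_cp}; it is cited from \cite{canto} (where the argument lives alongside the proof of Theorem~\ref{thm:c_p_constant} and the computation behind~\eqref{quantity:eps_cp}), so there is no in-text proof to compare against. That said, your layer-cake route --- writing $\int_Q w^{1+\delta} = \delta\int_0^\infty \lambda^{\delta-1}\,w(Q\cap\{w>\lambda\})\,d\lambda$, splitting at $\lambda_0 = W/|Q|$ with $W=\int_{\R^n}(M1_Q)^pw$, using $w(Q\cap\{w>\lambda\})\le W$ below $\lambda_0$ and the pair Chebyshev $+$ quantitative $C_p$ condition with the explicit $\eps$ from~\eqref{quantity:eps_cp} above $\lambda_0$ --- is the standard way of converting a decay-of-measure condition into a reverse H\"older inequality and almost certainly mirrors the cited source. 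The bookkeeping is handled carefully: the choice $\lambda_0=W/|Q|$ makes both pieces collapse to $W^{1+\delta}|Q|^{-\delta}$, the restriction $\delta\le\eps/2$ bounds the prefactor $\tfrac{2\delta}{\eps-\delta}$ by $2$, and the admissible range $\delta\le\eps/2$ is, via~\eqref{quantity:eps_cp}, exactly of the form $\delta\le \tfrac{1}{B\max\{[w]_{C_p},1\}}$ with $B=B(n,p)$, as the statement requires. The converse is a clean H\"older application; a small remark is that once you verify~\eqref{cond:c_p} with exponent $\eps=\delta/(1+\delta)$ you have $w\in C_p$ directly by the definition~\eqref{cond:c_p}, so the closing appeal to Theorem~\ref{thm:c_p_constant} is not needed.
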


\subsection{Maximal functions and singular integrals}
\label{section:maximal_and_singular}

Suppose that $f$ is a locally integrable function. The \emph{Hardy--Littlewood maximal operator} $M$ is defined as
$$
  Mf(x) \coloneqq \sup_{Q \ni x} \avgint_Q |f(y)| \, dy,
$$
where the supremum is taken over all cubes $Q$ that contain $x$. For every $s \ge 1$, we define the \emph{$s$-maximal operator} $M_s$ as
$$
  M_sf(x) \coloneqq \big( M(|f|^s \big)^{1/s}.
$$
Since the Hardy--Littlewood maximal operator is of weak type $(1,1)$, i.e.
$$
  \|Mf\|_{L^{1,\infty}} \le C_n \|f\|_{L^1}
$$
for every $f \in L^1(\R^n)$, it is straightforward to check that $M_s$ is of weak type $(s,s)$, i.e.
$$
  \|M_sf\|_{L^{s,\infty}} \le C_n^{1/s} \|f\|_{L^s}
$$
for every $f \in L^s(\R^n)$. In our constructions and proofs, we use repeatedly the fact that the Hardy--Littlewood maximal function $Mf$ is lower semicontinuous (see e.g. \cite[proof of Theorem 2.1.6]{grafakos}).

Let $T$ be a bounded linear operator on $L^2(\R^n)$ that has the representation
$$
  Tf(x)=\int_{\R^d}K(x,y)f(y)\,dy
$$
for every $x \notin \text{supp} \, f$. We say that $T$ is a \emph{Calder\'on--Zygmund operator}
if the kernel function $K$ satisfies the size estimate
$$
  |K(x,y)| \le \frac{C_K}{|x-y|^n}
$$
for all $x, y \in \R^n$, $x \neq y$, and the smoothness estimate
$$
  |K(x,y)-K(x',y)|+|K(y,x)-K(y,x')|\le C_K \frac{|x-x'|^\lambda}{|x-y|^{n+\lambda}}
$$
for some $\lambda>0$ and all $x,x',y \in \R^n$ such that $|x-y| > 2|x-x'| > 0$.

\section{Marcinkiewicz integral estimates} \label{section:Marcinkiewicz}

We start by recalling and refining some estimates related to Marcinkiewicz integral operators. These operators are one of the key tools in $C_p$ analysis due to their good boundedness properties with respect to certain weights. For our needs, the most convenient way to define the operators is using a Whitney decomposition of level sets as in \cite[Section 3]{sawyer_norm_inequalities}.

\begin{lemma}[{\cite{sawyer_norm_inequalities}}]
  Suppose that $R \ge 1$ and $\Omega \subset \R^n$ is an open set. Then there exists a constant $C = C(R,n)$ independent of $\Omega$ and a collection of disjoint cubes $\Qc \coloneqq \{Q_j\}_j$ such that
  \begin{enumerate}
    \item[i)] $\Omega = \bigcup_j Q_j$,

    \item[ii)] for every $j$ we have
              $$
                 5R \le \frac{\dist(Q_j, \Omega^c)}{\diam \, Q_j} \le 15R,
             $$

    \item[iii)] $\sum_j 1_{R Q_j}(x) \le C 1_\Omega(x)$ for every $x \in \R^n$.
  \end{enumerate}
\end{lemma}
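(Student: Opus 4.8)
The plan is to obtain this by a direct Whitney-type selection on dyadic cubes, rescaled so that the separation-to-diameter ratio lands in $[5R,15R]$ rather than in a fixed interval near $1$; this is essentially the construction in \cite[Section~3]{sawyer_norm_inequalities}, and as usual we assume $\Omega \subsetneq \R^n$. First I would call a dyadic cube $Q \in \Ds$ \emph{admissible} if $\dist(Q,\Omega^c) \ge 5R\diam Q$; since $\Omega^c$ is closed and $5R\diam Q > 0$, every admissible cube satisfies $Q \subset \Omega$. For a fixed $x \in \Omega$ every sufficiently small dyadic cube containing $x$ is admissible (because $\dist(Q,\Omega^c) \ge \dist(x,\Omega^c) - \diam Q$) while every sufficiently large one is not (because $\dist(Q,\Omega^c) \le \dist(x,\Omega^c) < \infty$ stays bounded as $Q$ grows, whereas $\diam Q$ does not), so there is a largest admissible dyadic cube $Q(x) \ni x$. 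I would then put $\Qc \coloneqq \{Q(x) : x \in \Omega\}$: two members that intersect are nested, and maximality forces them to coincide, so $\Qc$ consists of pairwise disjoint cubes, and $\bigcup_{Q \in \Qc} Q = \Omega$ is immediate. This gives i).

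For ii) the lower bound is just admissibility. For the upper bound I would pass to the dyadic parent $\widehat Q$ of $Q \in \Qc$, which by maximality is not admissible, so $\dist(\widehat Q,\Omega^c) < 5R\diam\widehat Q$; combining this with the elementary bound $\dist(Q,\Omega^c) \le \dist(\widehat Q,\Omega^c) + \diam\widehat Q$ and $\diam\widehat Q = 2\diam Q$ yields $\dist(Q,\Omega^c) < (10R+2)\diam Q \le 15R\diam Q$, the last step using $R \ge 1$.

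The one place that needs genuine work is iii), but it is the classical bounded-overlap argument. First I would observe that ii) keeps each dilate $RQ_j$ inside $\Omega$: a point of $RQ_j$ lies within distance $\tfrac12(R-1)\sqrt{n}\,\ell(Q_j)$ of $Q_j$, which is strictly smaller than $\dist(Q_j,\Omega^c) \ge 5R\sqrt{n}\,\ell(Q_j)$, so $\dist(RQ_j,\Omega^c) > 0$ and the left-hand side of iii) vanishes off $\Omega$. For $x \in \Omega$, if $x \in RQ_j$ then the triangle inequality together with ii) forces $\dist(x,\Omega^c)$ to lie between two fixed multiples of $\ell(Q_j)$ with constants depending only on $R$ and $n$; hence the cubes $Q_j$ with $x \in RQ_j$ range over a bounded number of dyadic generations, and within each generation they are pairwise disjoint cubes of one fixed side length all meeting a ball of radius $\lesssim_{R,n}\dist(x,\Omega^c)$ centered at $x$, so there are at most $C(R,n)$ of them. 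Summing over the finitely many relevant generations gives $\sum_j 1_{RQ_j}(x) \le C(R,n)\,1_\Omega(x)$. I do not expect a real obstacle here: the only care required is to keep every constant depending only on $R$ and $n$, which is automatic since admissibility and maximality are defined without any reference to the particular open set $\Omega$.
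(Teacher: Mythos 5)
Your proof is correct. The paper itself gives no proof of this lemma; it is cited directly to Sawyer's paper, so there is no ``paper's own proof'' to compare against. Your reconstruction is the standard dyadic Whitney decomposition with the admissibility threshold rescaled to $5R$ in place of a fixed constant near $1$: for each $x\in\Omega$ take the maximal dyadic cube $Q(x)\ni x$ with $\dist(Q,\Omega^c)\ge 5R\diam Q$, note that maximality of nested dyadic cubes gives disjointness and coverage, read off the lower bound in ii) from admissibility and the upper bound from the failure of admissibility of the dyadic parent together with $R\ge 1$, and obtain iii) by the usual geometric overlap count. All the steps check out: the parent estimate $\dist(Q,\Omega^c)\le\dist(\widehat Q,\Omega^c)+\diam\widehat Q$ and the arithmetic $10R+2\le 15R$ for $R\ge1$ are correct; the containment $RQ_j\subset\Omega$ follows from $\tfrac{R-1}{2}\sqrt n\,\ell(Q_j)<5R\sqrt n\,\ell(Q_j)$; and the two-sided comparison $\ell(Q_j)\approx_{R,n}\dist(x,\Omega^c)$ for $x\in RQ_j$ restricts the indices $j$ to boundedly many dyadic generations, within each of which the disjointness of the $Q_j$ and the bounded radius of $RQ_j$ give a count depending only on $R$ and $n$. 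The one implicit assumption, $\Omega\subsetneq\R^n$, is harmless and in keeping with the intended use (level sets of a finite function). This is, as you say, essentially Sawyer's construction, and the argument is complete.
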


Let $h$ be a non-negative lower semicontinuous function, $0 < p,q < \infty$, and $k \in \Z$. Then the level set $\Omega_k =\{x: h(x)>2^k\}$ is open and we can use the previous lemma to get the decomposition $\Omega_k = \bigcup_j Q_j^k$. We denote $\Qc_k \coloneqq \{Q_j^k\}_j$ for each $k \in \Z$ and define the Marcinkiewicz integral operator $M_{p,q}$ by setting
$$
  M_{p,q} h(x)  \coloneqq  \Big( \sum_{k \in \Z} 2^{kp} \sum_{Q \in \Qc_k}  M1_Q(x)^q\Big)^{\frac{1}{p}}.
$$
Note that the dependency on $h$ on the right hand side is coded in the families $\mathcal Q_k$. For each $k \in \Z$, we define the partial Marcinkiewicz operator $M_{k,p,q}$ the same way as in \cite{cejasliperezrivera-rios}:
$$
  M_{k,p,q} h(x) \coloneqq \Big( 2^{kp} \sum_{Q \in \mathcal Q_k}  M1_Q(x)^q \Big)^{\frac{1}{p}} .
$$
Thus, we have
$$
  \sum_{k \in \Z} M_{k,p,q}h(x)^p = M_{p,q} h(x)^p.
$$
These operators arise naturally when estimating $L^p(w)$ norms with $w \in C_q$. Indeed, by the layer cake representation \cite[Proposition 1.1.4]{grafakos} , we have
$$
  \| h \|_{L^p(w)}^p = p \int_0^\infty t^{p-1} w(\{ h>t \}) dt \approx p \sum_{k\in \Z} 2^{kp} \sum_{Q \in \mathcal Q_k} w(Q).
$$
The role that $w(Q)$ plays in the $A_\infty$ theory is often played by $\int_{\R^n} M(1_Q)^qw$ in the $C_q$ context. Therefore, the natural $C_q$ counterpart of the above expression is
$$
  \sum_{k\in  \Z} 2^{kp} \sum_{Q\in \mathcal Q_k} \int_{\R^n}(M1_Q)^q w = \int_{\R^n} (M_{p,q}h)^p w.
$$

The proof of the following lemma can be found in \cite[Lemma 5.8]{canto}. Although the range of exponents is not explicitly stated there, it holds for all exponents described below.
\begin{lemma}
  \label{lemma-Marcinkiewicz}
  Let $f$ be a compactly supported function and $w \in C_q$ with $1 < q <\infty$. Suppose that $0 < p < q$. Then
  $$
    \int_{\R^n} (M_{p,q} (Mf))^p w \leq c_n c_{p,q} \max(1,[w]_{C_q} \log [w]_{C_q}) \int_{\R^n} (Mf)^p w.
$$
\end{lemma}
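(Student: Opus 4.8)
The plan is to decompose the right-hand side over the level sets of $Mf$ and use the partial Marcinkiewicz operators $M_{k,p,q}$ to reduce everything to a single geometric estimate on the Whitney cubes $\Qc_k$. First I would fix the level sets $\Omega_k = \{Mf > 2^k\}$ of $h = Mf$; because $Mf$ is lower semicontinuous, these are open and the Whitney decomposition $\Omega_k = \bigcup_j Q_j^k$ applies with the properties i)--iii) from the preceding lemma, with $R$ chosen as a fixed structural constant (say $R = 3$, large enough to run the bounded overlap argument below). The key observation is that the left-hand side expands as
$$
\int_{\R^n} (M_{p,q}(Mf))^p w = \sum_{k \in \Z} 2^{kp} \sum_{Q \in \Qc_k} \int_{\R^n} (M1_Q)^q w,
$$
so by the definition of $[w]_{C_q}$ (using Remark \ref{Remark-finiteness} to assume $[w]_{C_q}>0$, and the finiteness case of Theorem \ref{thm:c_p_constant}) each inner integral is controlled by $[w]_{C_q}^{-1}\int_Q M(1_Q w)$ — wait, the inequality goes the useful way: $\int_Q M(1_Q w) \le [w]_{C_q} \int_{\R^n}(M1_Q)^q w$, which is the wrong direction. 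So instead I would use the Reverse Hölder estimate of Theorem \ref{thm:rhi_cp}: for $\delta \le (B\max\{[w]_{C_q},1\})^{-1}$ one has $\frac{1}{|Q|}\int_{\R^n}(M1_Q)^q w \gtrsim \langle w^{1+\delta}\rangle_Q^{1/(1+\delta)} \ge \langle w\rangle_Q$, hence $\int_{\R^n}(M1_Q)^q w \gtrsim w(Q)$; but again that bounds the $C_q$ quantity from below, not above. The correct route is the reverse: we must bound $\int_{\R^n}(M1_Q)^q w$ \emph{from above} by something summable, and here the hypothesis $p < q$ is essential.

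Concretely, the main step is: for each fixed $k$, bound $\int_{\R^n}(M_{k,p,q}(Mf))^p w = 2^{kp}\sum_{Q\in\Qc_k}\int_{\R^n}(M1_Q)^q w$. I would split the integral $\int_{\R^n}(M1_Q)^q w$ over the annuli $2^{m}Q \setminus 2^{m-1}Q$, $m \ge 1$, together with the bulk $3Q$. On $3Q$ one uses the local Reverse Hölder inequality \eqref{ineq:reverse_holder_cp} to pass from $w(3Q)$ to $\int_{\R^n}(M1_{3Q})^q w$, absorbing a structural constant and an $(Mf)^p$ factor via the Whitney property $Mf \gtrsim 2^k$ on a fixed dilate of $Q$. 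On the far annuli one uses $M1_Q \lesssim 2^{-mn}$ there, so the tails contribute a convergent geometric series in $m$ because $q > p \ge$ anything relevant — more precisely one needs the decay $\sum_m 2^{-mnq} \cdot (\text{growth of } w(2^mQ))$, and the growth of $w(2^mQ)$ along dilates is controlled by the doubling-type / Reverse Hölder consequences of $w\in C_q$, with the total loss being $\log[w]_{C_q}$ (this is precisely where the logarithm in the statement comes from, as in \cite[Lemma 5.8]{canto}). Summing over $Q\in\Qc_k$ and then over $k$, and using the bounded overlap property iii) to re-assemble $\sum_k 2^{kp} 1_{\Omega_k} \approx (Mf)^p$ pointwise up to structural constants, yields the claimed bound with the factor $\max(1,[w]_{C_q}\log[w]_{C_q})$.

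I expect the main obstacle to be the careful bookkeeping of the two-parameter sum: over Whitney scales $m$ (controlling the tails of $(M1_Q)^q$) and over the level index $k$ (re-summing via bounded overlap), while keeping the dependence on $[w]_{C_q}$ linear-times-log rather than polynomial. The delicate point is choosing $\delta$ in Theorem \ref{thm:rhi_cp} of size $\approx [w]_{C_q}^{-1}$: this forces one to estimate $\sum_m 2^{-mn\delta}$-type series, which is where a factor $\delta^{-1} \approx [w]_{C_q}$ appears, and a separate dyadic pigeonholing over $m$ produces the extra $\log[w]_{C_q}$. Since the proof is already carried out in \cite[Lemma 5.8]{canto}, for the present paper it suffices to quote it and remark, as the statement does, that although the admissible range of $(p,q)$ is not spelled out there, the argument goes through verbatim for all $0 < p < q < \infty$ because it only uses $q > p$ through the convergence of the geometric tails and never uses $p \ge 1$.
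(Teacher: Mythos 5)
Your final conclusion --- simply quote \cite[Lemma 5.8]{canto} and remark that the argument there goes through for all $0 < p < q < \infty$ --- is exactly what the paper does; in the published text this lemma carries no proof at all, only the citation and the observation about the range of exponents.

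Your exploratory sketch of what the cited proof might look like is a reasonable guess at the overall structure (discretize $\int(M1_Q)^q w \approx \sum_m 2^{-nqm}w(2^m Q)$, handle the local piece by bounded overlap of the Whitney dilates and the Whitney property $Mf \gtrsim 2^k$ on $cQ$, use the $C_q$ decay for the tails, and pay a $\log[w]_{C_q}$ for the choice of cut-off scale), but there are slips along the way: the claimed use of the Reverse H\"older inequality to ``pass from $w(3Q)$ to $\int(M1_{3Q})^q w$'' is stated backwards --- that direction is trivial since $M1_{3Q} \geq 1$ on $3Q$ and holds with no weight hypothesis at all --- and in fact the local part $2^{kp}\int_{3Q}(M1_Q)^q w \le 2^{kp}w(3Q) \lesssim \int_{3Q}(Mf)^p w$ needs no Reverse H\"older input. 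These slips do not affect your proposal's validity as a proof for \emph{this} paper, since the paper's own proof consists of the citation alone.
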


\begin{lemma}
  \label{summation-sparse-lemma}
  Let $Q$ be a cube and $\Sc$ a sparse family of cubes that are contained in $Q$. Suppose that $w \in C_q$ with $1 < q < \infty$. Then
 $$
    \int_{\R^n} \sum_{R \in \Sc} \big( M1_R \big)^q w \lesssim \big( [w]_{C_q}+1\big)  \int_{\R^n} (M1_Q)^q w.
 $$
\end{lemma}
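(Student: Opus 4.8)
The plan is to reduce the summation over the sparse family $\Sc$ to a geometric-series argument driven by the sparseness, and then absorb everything into a single copy of $\int (M1_Q)^q w$ using the $C_q$ characteristic of $w$. First I would organize $\Sc$ by generations: since all $R \in \Sc$ are contained in $Q$, I can stratify them by, say, $\Sc_k = \{R \in \Sc : 2^{-k-1}\ell(Q) < \ell(R) \le 2^{-k}\ell(Q)\}$ for $k \ge 0$, or, more in the spirit of the paper, use the stopping-time/principal-cube structure that sparseness provides. The key point is that for a sparse family, the cubes $R$ with comparable ``$w$-mass'' or overlap can be controlled: the sets $E_R \subset R$ with $|E_R| \ge \frac12|R|$ are pairwise disjoint, so $\sum_{R \in \Sc} 1_R$ has controlled overlap in an $L\log L$ or $A_\infty$ sense. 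However, here the relevant functional is not $w(R)$ but $\int_{\R^n}(M1_R)^q w$, which lives on all of $\R^n$, so I cannot directly use disjointness of the $E_R$'s; instead I would exploit that $M1_R \lesssim M1_Q$ pointwise (since $R \subset Q$) together with a decay estimate.

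The heart of the argument is the following dichotomy for a fixed point $x$ and the tail of $M1_R(x)$. For $R \subset Q$, one has $M1_R(x) \approx \big(\ell(R)/\ell(R \cup\{x\})\big)^n$-type behaviour away from $R$, so $(M1_R(x))^q$ decays like a power of the distance. I would split
$$
\sum_{R \in \Sc}(M1_R(x))^q = \sum_{R \in \Sc,\, x \in cR}(M1_R(x))^q + \sum_{R \in \Sc,\, x \notin cR}(M1_R(x))^q,
$$
for a suitable dilation constant $c$. For the first sum, only a bounded-overlap (in fact, for a sparse family, a controlled number at each scale) collection of $R$ contains $x$ in its dilate at each generation, and $M1_R(x) \le 1$, so this sum is $\lesssim$ (number of relevant generations) — but to avoid a logarithmic loss I would instead sum the geometric series in $k$ directly, using $(M1_R(x))^q \le \min\{1,(\ell(R)/\text{dist}(x,R))^{nq}\}$ and the sparseness to bound the number of cubes of each generation meeting a fixed dilate of a fixed cube. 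This produces $\sum_{R}(M1_R)^q \lesssim (M1_Q)^q \cdot(\text{geometric constant})$ pointwise off an exceptional region, plus a term supported near the cubes themselves where $M1_R \le 1$ and I instead bound $\sum_R 1_{cR} \lesssim M_{\text{something}}$; integrating against $w$ and using the definition of $[w]_{C_q}$ (equivalently the Fujii--Wilson form, $\int_Q M(1_Q w) \le [w]_{C_q}\int(M1_Q)^q w$, together with $M1_R \lesssim M1_Q$) converts each piece into $([w]_{C_q}+1)\int(M1_Q)^q w$.

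The main obstacle I anticipate is the term where $x$ is close to the cubes $R$: there $(M1_R(x))^q$ is of order $1$ and the pointwise sum $\sum_R (M1_R(x))^q$ can be as large as the number of generations, which is unbounded, so a naive pointwise estimate fails. The fix is to not estimate pointwise there but to integrate first: $\int_{\R^n}\sum_{R\in\Sc}1_{cR}\,(\text{weight})$ should be handled by the sparseness — $\sum_R 1_{R}\le$ (bounded overlap per scale) combined with a telescoping/Carleson-packing bound — and crucially by the observation that $\int_{\R^n}(M1_R)^q w \le \int_{\R^n}(M1_Q)^q w$ already for a single $R$, while the number of $R$'s that genuinely ``see'' a given portion of the mass of $(M1_Q)^q w$ is geometrically summable because the $E_R$'s are disjoint and each carries a definite fraction of $|R|$. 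So the endgame is: fix a principal cube decomposition of $\Sc$, show $\sum_{R\in\Sc}\int(M1_R)^q w \lesssim \sum_{R}\frac{|R|}{|Q|}\int(M1_Q)^q w$ up to the $C_q$-loss, and conclude by $\sum_{R\in\Sc}|R| \lesssim |Q|$ from sparseness. I expect this packing step to be where the factor $([w]_{C_q}+1)$ (rather than a pure structural constant) is unavoidably introduced, via Theorem \ref{thm:rhi_cp} or the reverse-Hölder-type control of $w$ by $\frac{1}{|Q|}\int(M1_Q)^q w$.
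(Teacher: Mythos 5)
Your far-field estimate (for $x$ away from $Q$, using $M1_R(x)^q\lesssim (|R|/\mathrm{dist}(x,Q)^n)^q$ and sparseness together with $q>1$ to absorb the sum) matches what the paper does for $x\notin 2Q$, and you correctly flag the genuine obstacle: near the cubes, a pointwise bound on $\sum_R (M1_R(x))^q$ is unavailable because the sum can grow like the number of generations. But the fix you propose does not close the gap. Your ``endgame'' step asserts $\int(M1_R)^q w \lesssim \tfrac{|R|}{|Q|}\int(M1_Q)^q w$, to be summed using $\sum_{R\in\Sc}|R|\lesssim |Q|$. This fails: the $C_q$ condition on the subset $R\subset Q$ only yields $w(R)\le C(|R|/|Q|)^{\eps}\int(M1_Q)^q w$ with $\eps\sim 1/([w]_{C_q}+1)$, which can be far below $1$, and $\sum_{R\in\Sc}(|R|/|Q|)^\eps$ is \emph{not} bounded by a structural constant for a general sparse family when $\eps<1$ (take one generation of $N$ disjoint cubes of measure $|Q|/N$ and let $N\to\infty$). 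Carleson packing of $|R|$ is the wrong quantity to sum against; the small $\eps$ destroys it.

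The missing idea is to replace the per-cube packing argument by a distribution-function estimate. The paper passes to $\sum_R (M1_{E_R})^q$ with the exceptional sets $E_R$ pairwise disjoint (justified by Lemma \ref{lemma:hl_bound}), observes $\sum_R 1_{E_R}\le 1$, and then invokes the Fefferman--Stein exponential bound \cite[Theorem 1(3)]{feffermanstein_some}: $|\{x\in 2Q: \sum_R M1_{E_R}(x)^q>\lambda\}|\le c e^{-c\lambda}|Q|$. Applying the $C_q$ condition to this level set turns the Lebesgue-measure decay into $w$-measure decay $e^{-c\lambda/([w]_{C_q}+1)}\int(M1_Q)^q w$, and integrating in $\lambda$ by the layer-cake formula (splitting at $\lambda\sim [w]_{C_q}+1$) produces exactly the factor $([w]_{C_q}+1)$. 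So the $C_q$-loss enters through the exponent $\eps$ in the level-set bound, not through Reverse H\"older as you speculated. Without the exponential-integrability input your argument does not go through.
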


\begin{proof}
We start by noticing that if $x \notin 2Q$, then we have
\begin{align*}
  \sum_{R\in \Sc} M1_R(x)^q
    &\lesssim \sum_{R \in \Sc}\Big(  \frac{|R|}{\dist (x,Q)^n} \Big)^q
    \lesssim \sum_{R\in \Sc} \Big( \frac{|E_R|}{\dist(x,Q)^n}\Big)^q \\
    &= \frac{\sum_{R\in \Sc} |E_R|^q}{\dist(x,Q)^{nq}}  \le \Big( \frac{|Q|}{\dist(x,Q)^n}\Big)^q  \lesssim M1_Q(x)^q,
\end{align*}
where $E_R$ is the exceptional set given by sparsity and we used the assumption $q > 1$ in the estimate $\sum_{R \in \Sc} |E_R|^q \le |Q|^q$. Thus, it is enough to bound $\int_{2Q} \sum_{R \in \Sc} \big( M1_R \big)^q w$.

Since $E_R \subset R$ and $|E_R| \ge \tfrac{1}{2}|R|$ for every $R \in \Sc$, we have the pointwise bound
$$
  \sum_{R \in \Sc} (M1_R)^q \lesssim \sum_{R \in \Sc} (M1_{E_R})^q
$$
by Lemma \ref{lemma:hl_bound}. Also, since the sets $E_R$ are pairwise disjoint, we have $\sum_R (1_{E_R})^q \le 1 \in L^\infty$. Thus,
by \cite[Theorem 1 (3)]{feffermanstein_some} there exists $c > 0$ such that
for every $\lambda>0$ we have
\begin{equation}
  \label{estimate:level_set}
  |F_\lambda| \coloneqq |\{x\in 2Q: \sum M1_R(x)^q > \lambda \}| \le c e^{-c \lambda} |Q|.
\end{equation}
Applying the $C_q$ condition to $F_\lambda\subseteq 2Q$ now gives us
\begin{equation}
  \label{estimate:F_lambda}
  w(F_\lambda) \le c \Big( \frac{|F_\lambda|}{|2Q|} \Big)^\eps \int_{\R^n}( M1_{2Q})^q w
               \overset{\eqref{quantity:eps_cp}, \eqref{estimate:level_set}}{\lesssim} e^{-c \frac{\lambda}{[w]_{C_q}+1}} \int_{\R^n}( M1_Q)^q w.
\end{equation}
Thus, for any fixed $\lambda > 0$ we have
$$
  \int_{2Q} \sum_{R \in \Sc} (M1_R)^q w
  = \int_0^\infty w(F_t) \, dt
  = \int_0^\lambda w(F_t) \, dt + \int_\lambda^\infty  w(F_t) \, dt
  \eqqcolon I_1 + I_2.
$$
For $I_1$, we can use Lemma \ref{lemma:hl_bound} to see that
$$
  I_1 \le \lambda w(2Q)
      = \lambda \int_{\R^n} 1_{2Q} w
      \le \lambda \int_{\R^n} M(1_{2Q})^q w
      \lesssim \lambda \int_{\R^n} M(1_Q)^q w.
$$
For $I_2$, we can use \eqref{estimate:F_lambda} to get
$$
  I_2 \lesssim \int_\lambda^\infty e^{- c \frac{t}{[w]_{C_q}+1}} dt  \int_{\R^n} (M1_Q)^q w
      \le c([w]_{C_q}+1) e^{-c \frac{\lambda}{[w]_{C_q}+1}}  \int_{\R^n} (M1_Q)^q w.
$$
Thus, we have
$$
  I_1 + I_2 \lesssim \Big(\lambda + \big( [w]_{C_q}+1 \big) e^{-c \frac{\lambda}{[w]_{C_q}+1}} \Big)  \int_{\R^n} (M1_Q)^q w
$$
and choosing $\lambda = [w]_{C_q}+1$ completes the proof.
\end{proof}

\begin{lemma}
  \label{lemma-sparse}
  Let $h$ be a non-negative lower semicontinuous function, $w \in C_q$, $1 < q < \infty$ and $0 < p < \infty$. Suppose that $k \in \Z$ and let $\Sc = \{R_j\}$ be a sparse collection of cubes contained in $\Omega_k = \{ x \colon h(x) > 2^k\}$. Then
 $$
    2^{kp} \sum_{R_j \in \Sc} \int_{\R^n} (M1_{R_j})^q w \lesssim \big( [w]_{C_q} + 1 \big) \int_{\R^n} (M_{k,p,q} h)^p w
$$
\end{lemma}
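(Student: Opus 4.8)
The plan is to reduce the estimate to the single‑cube bound of Lemma \ref{summation-sparse-lemma}. First I would unwind the definition of the partial Marcinkiewicz operator: since $(M_{k,p,q}h)^p = 2^{kp}\sum_{Q\in\mathcal Q_k}(M1_Q)^q$ pointwise, we have $\int_{\R^n}(M_{k,p,q}h)^p w = 2^{kp}\sum_{Q\in\mathcal Q_k}\int_{\R^n}(M1_Q)^q w$. After cancelling the factor $2^{kp}$ on both sides of the asserted inequality, the claim becomes
$$
  \sum_{R\in\Sc}\int_{\R^n}(M1_R)^q w \lesssim ([w]_{C_q}+1)\sum_{Q\in\mathcal Q_k}\int_{\R^n}(M1_Q)^q w .
$$

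Next I would organize $\Sc$ according to the Whitney decomposition $\mathcal Q_k=\{Q_j^k\}_j$ of $\Omega_k$. For each $R\in\Sc$ let $x_R$ denote its centre; since $R\subset\Omega_k=\bigcup_j Q_j^k$ and the Whitney cubes are disjoint, $x_R$ belongs to exactly one cube $Q_{j(R)}^k$. Put $\Sc_j:=\{R\in\Sc:j(R)=j\}$, so that $\Sc=\bigcup_j\Sc_j$ is a partition and each $\Sc_j$ is again sparse, being a subfamily of $\Sc$. The key geometric step is to show that every $R\in\Sc_j$ is contained in a fixed concentric dilate $\kappa Q_j^k$, where $\kappa$ is a structural constant. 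To see this, note that $B(x_R,\ell(R)/2)\subset R\subset\Omega_k$, so $\dist(x_R,\Omega_k^c)\ge\ell(R)/2$, while on the other hand $x_R\in Q_j^k$ together with the upper Whitney separation bound $\dist(Q_j^k,\Omega_k^c)\lesssim\diam Q_j^k$ gives $\dist(x_R,\Omega_k^c)\le\dist(Q_j^k,\Omega_k^c)+\diam Q_j^k\lesssim\ell(Q_j^k)$. Hence $\ell(R)\lesssim\ell(Q_j^k)$, and since $x_R\in R\cap Q_j^k$ this forces $R\subset\kappa Q_j^k$.

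With this in hand I would apply Lemma \ref{summation-sparse-lemma} to the sparse family $\Sc_j$, whose cubes are all contained in the single cube $\kappa Q_j^k$, to obtain
$$
  \int_{\R^n}\sum_{R\in\Sc_j}(M1_R)^q w \lesssim ([w]_{C_q}+1)\int_{\R^n}(M1_{\kappa Q_j^k})^q w
$$
with a structural implied constant. Summing over $j$ and invoking the pointwise bound $M1_{\kappa Q_j^k}\lesssim M1_{Q_j^k}$ — which follows from Lemma \ref{lemma:hl_bound} since $Q_j^k\subset\kappa Q_j^k$ with $|Q_j^k|=\kappa^{-n}|\kappa Q_j^k|$ — yields
$$
  \sum_{R\in\Sc}\int_{\R^n}(M1_R)^q w \lesssim ([w]_{C_q}+1)\sum_j\int_{\R^n}(M1_{Q_j^k})^q w ,
$$
and multiplying back by $2^{kp}$ recovers $([w]_{C_q}+1)\int_{\R^n}(M_{k,p,q}h)^p w$, as desired.

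I expect the main obstacle to be the geometric step establishing $R\subset\kappa Q_j^k$: one has to combine the containment $R\subset\Omega_k$ with the Whitney separation estimate in order to rule out cubes of $\Sc$ that are much larger than the Whitney cube containing their centre. Once that is in place the remaining argument is purely organizational, resting on the two black‑box inputs Lemma \ref{summation-sparse-lemma} and Lemma \ref{lemma:hl_bound}.
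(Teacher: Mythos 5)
Your proof is correct and follows essentially the same route as the paper: partition $\Sc$ according to which Whitney cube of $\Omega_k$ contains the centre, use the Whitney separation bound together with $R\subset\Omega_k$ to show each $R$ lies in a bounded dilate of its Whitney cube, apply Lemma~\ref{summation-sparse-lemma} to each subfamily, and reassemble via $M1_{\kappa Q_j^k}\lesssim M1_{Q_j^k}$. You spell out the geometric step ($\ell(R)\lesssim\ell(Q_j^k)$ via $\dist(x_R,\Omega_k^c)\ge\ell(R)/2$ and the Whitney bound) and the final use of Lemma~\ref{lemma:hl_bound} more explicitly than the paper, which simply asserts $R_j\subset c_nQ_l$, but the argument is the same.
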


\begin{proof}
Fix $k\in \Z$ and let $\Qc_k = \{Q_l\}_l$ be the Whitney decomposition of $\Omega_k$. For each $Q_l\in \Qc_k$, let $\Sc _{k,l}$ be the family of cubes $R_j$ whose center is contained in $Q_l$. Then, by the properties of the Whitney cubes and the fact that $R_j \subset \Omega_k$, we have $R_j \subset c_n Q_l$ for every $R_j \in \Sc_{k,l}$. Moreover, each $R_j\in \Sc$ is contained in exactly one of the $\Sc_{k,l}$.

The desired estimate follows now from applying Lemma \ref{summation-sparse-lemma} to each of the collections $\Sc_{k,l}$:
\begin{align*}
  2^{kp}\sum_{R_j \in \Sc} \int_{\R^n} (M1_{R_j})^q w
  &= 2^{kp} \sum_{Q_l \in \Qc_k} \sum_{R_j \in \Sc_{k,l}} \int_{\R^n} (M1_{R_j})^q w \\
  &\lesssim \big( [w]_{C_q} + 1 \big) 2^{kp} \sum_{Q_l \in \Qc_k} \int_{\R^n} (M1_{Q_l}) ^q w \\
  & = \big( [w]_{C_q}+1\big)  \int_{\R^n} (M_{k,p,q}h)^p w. \qedhere
\end{align*}
\end{proof}

\begin{corollary}
  \label{Corollary-norm}
  Suppose that $\Sc$ is a sparse collection of cubes, $f$ is a locally integrable function, $w\in C_q$ for $1 < q < \infty$ and $0 < p < q.$ Then
 $$
    \sum_{Q\in \Sc} \langle f \rangle_Q^p \int_{\R^n} (M1_Q)^q w \le c_{n,p,q} ([w]_{C_q}+1)^2\log ([w]_{C_q}+e) \| Mf\|_{L^p(w)}^p.
 $$
\end{corollary}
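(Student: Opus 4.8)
The plan is to reduce the estimate to a combination of Lemmas~\ref{lemma-sparse} and~\ref{lemma-Marcinkiewicz} after slicing the sparse family along the level sets of $Mf$. First I would dispose of some trivial cases and technical reductions: by Remark~\ref{Remark-finiteness} one may assume $[w]_{C_q}>0$, so that $\int_{\R^n}(M1_Q)^q w<\infty$ for every cube $Q$ (otherwise the right-hand side is already infinite); replacing $f$ by $|f|$ does not change $Mf$ and only enlarges each $\langle f\rangle_Q$, so one may assume $f\ge 0$; and applying the inequality to the compactly supported truncations $f\,1_{B(0,R)}$ and letting $R\to\infty$ by monotone convergence---both $\langle f\,1_{B(0,R)}\rangle_Q\uparrow\langle f\rangle_Q$ and $M(f\,1_{B(0,R)})\uparrow Mf$ pointwise---it is enough to treat compactly supported $f$, which is the hypothesis under which Lemma~\ref{lemma-Marcinkiewicz} is stated.

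Next I would set $h:=Mf$, a non-negative lower semicontinuous function, and $\Omega_k:=\{x:Mf(x)>2^k\}$. For each $Q\in\Sc$ with $\langle f\rangle_Q>0$ I assign the unique integer $k_Q$ with $2^{k_Q}<\langle f\rangle_Q\le 2^{k_Q+1}$; since $Mf(x)\ge\langle f\rangle_Q>2^{k_Q}$ for every $x\in Q$, the cube $Q$ is contained in $\Omega_{k_Q}$. Grouping $\Sc_k:=\{Q\in\Sc:k_Q=k\}$, each $\Sc_k$ is a sparse subfamily of $\Sc$ consisting of cubes contained in $\Omega_k$ (cubes with $\langle f\rangle_Q=0$ contribute nothing and may be discarded). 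Using $\langle f\rangle_Q^p\le 2^p\,2^{k_Q p}$ this yields
$$
  \sum_{Q\in\Sc}\langle f\rangle_Q^p\int_{\R^n}(M1_Q)^q w\;\le\;2^p\sum_{k\in\Z}2^{kp}\sum_{Q\in\Sc_k}\int_{\R^n}(M1_Q)^q w.
$$

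Then I would invoke Lemma~\ref{lemma-sparse} with $h=Mf$ on each family $\Sc_k$ to bound $2^{kp}\sum_{Q\in\Sc_k}\int_{\R^n}(M1_Q)^q w\lesssim([w]_{C_q}+1)\int_{\R^n}(M_{k,p,q}(Mf))^p w$, sum over $k\in\Z$ and use the identity $\sum_k(M_{k,p,q}h)^p=(M_{p,q}h)^p$ to arrive at $\lesssim_p([w]_{C_q}+1)\int_{\R^n}(M_{p,q}(Mf))^p w$, and finally apply Lemma~\ref{lemma-Marcinkiewicz} to control this last integral by $c_nc_{p,q}\max(1,[w]_{C_q}\log[w]_{C_q})\|Mf\|_{L^p(w)}^p$. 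Since $\max(1,[w]_{C_q}\log[w]_{C_q})\le([w]_{C_q}+1)\log([w]_{C_q}+e)$ regardless of the size of $[w]_{C_q}$, the product of the two weight factors is at most $([w]_{C_q}+1)^2\log([w]_{C_q}+e)$, which is exactly the claimed bound (and the dependence $c_{p,q}\to\infty$ as $q\to p$ is inherited from Lemma~\ref{lemma-Marcinkiewicz}). I do not expect a deep obstacle: the two lemmas carry all the analytic weight, and the only points requiring care are the bookkeeping that guarantees each $\Sc_k$ is a sparse family lying inside $\Omega_k$, and the truncation/limiting argument needed so that Lemma~\ref{lemma-Marcinkiewicz}---stated for compactly supported functions---applies to a merely locally integrable $f$.
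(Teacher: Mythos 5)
Your proposal is correct and follows essentially the same route as the paper: slice the sparse family into level sets $\Sc_k$ according to $\langle f\rangle_Q$, observe $Q\subset\{Mf>2^k\}$, apply Lemma~\ref{lemma-sparse} to each slice, resum via $\sum_k(M_{k,p,q}h)^p=(M_{p,q}h)^p$, and finish with Lemma~\ref{lemma-Marcinkiewicz}. The extra reductions you spell out (to $f\geq 0$, to compactly supported $f$ by monotone convergence, and to $[w]_{C_q}>0$ via Remark~\ref{Remark-finiteness}) are all sound and merely left implicit in the paper's proof.
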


\begin{proof}
We start by making a level decomposition of the sparse family: for every $k \in \Z,$ we set
$$
\Sc_k \coloneqq \{ Q\in \Sc : 2^k < \langle |f| \rangle_Q \le 2^{k+1}\}.
$$
Clearly we have $\Sc = \bigcup_{k \in \Z} \Sc_k$. Now, for each $Q \in \Sc_k$, we have trivially $Q\subset \{ Mf>2^k\}$. Thus, Lemmas \ref{lemma-sparse} and \ref{lemma-Marcinkiewicz} give us
\begin{align*}
\sum_{Q \in \Sc} \langle |f| \rangle_Q^p \int_{\R^n} (M1_Q)^q w
&\le 2^p \sum_{k \in \Z} 2^{kp} \sum_{Q \in \Sc_k} \int_{\R^n} (M1_Q)^q w \\
&\lesssim 2^p \big( [w]_{C_q} + 1\big)  \sum_{k \in \Z} \int_{\R^n} (M_{k,p,q} Mf)^p w. \\
&=  2^p \big( [w]_{C_q} + 1 \big) \int_{\R^n} (M_{p,q} Mf)^p w \\
&\le c_{n,p,q} \big( [w]_{C_q} + 1 \big)^2 \log \big( [w]_{C_q} + e\big) \| Mf \|_{L^p(w)}^p.  \qedhere
\end{align*}
\end{proof}

\section{Proof of part I) of Theorems \ref{thm:main_result_rough} and \ref{thm:main_sparse_bounds}}
\label{section:proof_part_i}

As we stated before, part I) of Theorem \ref{thm:main_result_rough} follows easily from a combination of part I) Theorem \ref{thm:main_sparse_bounds} and Theorem \ref{thm:condealonsoetal}. Indeed, let $s$ be the one given by  Theorem \ref{thm:main_sparse_bounds}. We apply Theorem \ref{thm:condealonsoetal} with parameter $s$ and we get
\begin{align*}
\| T_\Omega f\|_{L^p(w)}  = \sup_{\| g\|_{L^{p'}(w)}=1}  | \langle T_\Omega f, gw\rangle |   
& \leq c_n \| \Omega\|_\infty \: s' \sup_{\| g\|_{L^{p'}(w)}=1} \sup_\Sc \sum_{Q\in \Sc } \langle f\rangle _Q \langle  gw\rangle_{s,Q} |Q| \\
& \leq c_{n,p,q}  \| \Omega\|_\infty \big([w]_{C_q} + 1 \big)^{3} \log\big( [w]_{C_q} + e \big) \|Mf\|_{L^p(w)},
\end{align*}
where we used part I) of Theorem \ref{thm:main_sparse_bounds} in the last inequality.

We now give the proof of part I) of Theorem \ref{thm:main_sparse_bounds}. Let us start by recalling the dyadic Carleson embedding theorem that we need a couple of times in our proofs.

\begin{theorem}[{\cite[Theorem 4.5]{hytonenperez_sharp}}]
  \label{thm:carleson_embedding}
  Let $\mathcal{D}$ be a collection of dyadic cubes, $w$ a weight and $a_Q$ a non-negative number for every $Q \in \mathcal{D}$. Suppose that there exists $A \ge 0$ such that for every $R \in \mathcal{D}$ we have
 $$
    \sum_{ Q \in \mathcal{D}, Q \subset R} a_Q \leq A w(R).
 $$
  Then, for all $1 < \alpha < \infty$ and $h \in L^\alpha(w)$, we have
$$
    \Big( \sum_{R \in \mathcal{D}} a_R ( \langle h \rangle_R^w)^\alpha \Big)^\frac{1}{\alpha} \le A^\frac{1}{\alpha} \cdot \alpha' \cdot \|h\|_{L^\alpha(w)}.
$$
\end{theorem}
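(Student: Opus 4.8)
The plan is to run the classical layer–cake argument: reduce the sum to an integral of the distribution function of a weighted dyadic maximal operator, dominate the inner sum by the Carleson hypothesis, and then close the estimate by a short bootstrap that produces exactly the constant $A^{1/\alpha}\alpha'$ (rather than the weaker constant one gets from Marcinkiewicz interpolation). We may assume $h \ge 0$, and by a limiting argument — exhaust $\mathcal{D}$ by an increasing sequence of finite subfamilies $\mathcal{D}_N$; the Carleson hypothesis is inherited by every subfamily and the left-hand side for $\mathcal{D}_N$ increases to that for $\mathcal{D}$ by monotone convergence — we may assume $\mathcal{D}$ is finite. This is what legitimises dividing by finite quantities below; with $\mathcal{D}$ finite one checks directly from $h \in L^\alpha(w)$ that the maximal function $M_{\mathcal{D}}^w h(x) := \sup_{R \in \mathcal{D},\, R \ni x} \langle h \rangle_R^w$ associated with the measure $w\,dx$ is finite everywhere and lies in $L^\alpha(w)$, and that $\Lambda := \sum_{R\in\mathcal{D}} a_R (\langle h\rangle_R^w)^\alpha < \infty$. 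Write $\Omega_s := \{x : M_{\mathcal{D}}^w h(x) > s\}$.

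First I would use the identity $t^\alpha = \alpha \int_0^\infty s^{\alpha-1} 1_{\{s<t\}}\,ds$ to rewrite
$$\Lambda = \alpha \int_0^\infty s^{\alpha-1} \Big( \sum_{R\in\mathcal{D}:\, \langle h\rangle_R^w > s} a_R \Big) \, ds.$$
For fixed $s$, let $\{R_j\}$ be the maximal cubes of $\{R\in\mathcal{D} : \langle h\rangle_R^w > s\}$; these are pairwise disjoint, every cube in that family sits inside some $R_j$, and $\bigcup_j R_j = \Omega_s$. Applying the Carleson hypothesis to each $R_j$ and using maximality (so $w(R_j) < \tfrac1s \int_{R_j} hw$),
$$\sum_{R\in\mathcal{D}:\, \langle h\rangle_R^w > s} a_R \le \sum_j \sum_{R\in\mathcal{D},\, R \subseteq R_j} a_R \le A \sum_j w(R_j) \le \frac{A}{s} \int_{\Omega_s} hw.$$
Plugging this in, swapping the order of integration, and computing $\int_0^{M_{\mathcal{D}}^w h(x)} s^{\alpha-2}\,ds = \tfrac{1}{\alpha-1}(M_{\mathcal{D}}^w h(x))^{\alpha-1}$ gives, after Hölder's inequality with exponents $\alpha, \alpha'$ (using $(\alpha-1)\alpha' = \alpha$),
$$\Lambda \le \frac{A\alpha}{\alpha-1} \int_{\R^n} h \, (M_{\mathcal{D}}^w h)^{\alpha-1} w = A\alpha' \int_{\R^n} h \, (M_{\mathcal{D}}^w h)^{\alpha-1} w \le A\alpha' \|h\|_{L^\alpha(w)} \, \|M_{\mathcal{D}}^w h\|_{L^\alpha(w)}^{\alpha-1}.$$

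It then remains to bound $\|M_{\mathcal{D}}^w h\|_{L^\alpha(w)}$, which I would do by the very same computation applied directly to $M_{\mathcal{D}}^w h$: by layer–cake and the maximal-cube bound $w(\Omega_s) = \sum_j w(R_j) \le \tfrac1s\int_{\Omega_s} hw$,
$$\|M_{\mathcal{D}}^w h\|_{L^\alpha(w)}^\alpha = \alpha \int_0^\infty s^{\alpha-1} w(\Omega_s)\, ds \le \alpha' \|h\|_{L^\alpha(w)} \, \|M_{\mathcal{D}}^w h\|_{L^\alpha(w)}^{\alpha-1},$$
and since $0 < \|M_{\mathcal{D}}^w h\|_{L^\alpha(w)} < \infty$ we may divide to get $\|M_{\mathcal{D}}^w h\|_{L^\alpha(w)} \le \alpha' \|h\|_{L^\alpha(w)}$. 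Feeding this back into the previous display yields $\Lambda \le A (\alpha')^\alpha \|h\|_{L^\alpha(w)}^\alpha$, i.e. $\Lambda^{1/\alpha} \le A^{1/\alpha}\alpha' \|h\|_{L^\alpha(w)}$, which is the claim; the finiteness reductions are removed at the end by monotone convergence.

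The argument is essentially soft, so there is no single hard step; the points that require care are the reduction to a finite family $\mathcal{D}$ (this is what makes the selection of maximal cubes, the identity $\bigcup_j R_j = \Omega_s$, and the division by $\|M_{\mathcal{D}}^w h\|_{L^\alpha(w)}$ and $\Lambda^{(\alpha-1)/\alpha}$ completely unproblematic) and the bookkeeping in the Hölder step that converts $\tfrac{\alpha}{\alpha-1}$ into $\alpha'$. The self-referential bootstrap for $M_{\mathcal{D}}^w$ is precisely what pins down the sharp constant $\alpha'$; one could instead invoke the sharp $L^\alpha(w)$-bound for the weighted dyadic maximal operator as a known fact, but the bootstrap keeps the proof self-contained.
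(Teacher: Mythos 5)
The paper does not prove this statement but simply cites it as \cite[Theorem 4.5]{hytonenperez_sharp}; your self-contained proof is correct and is in fact the standard argument (layer–cake, maximal-cube selection fed into the Carleson hypothesis, H\"older, and the sharp $L^\alpha(w)$ bound $\|M^w_{\mathcal{D}}\|_{L^\alpha(w)\to L^\alpha(w)} \le \alpha'$ for the weighted dyadic maximal operator obtained by the same bootstrap), which also matches the route taken in the cited reference. The bookkeeping — maximal cubes are disjoint and exhaust $\Omega_s$, the exponent arithmetic $(\alpha-1)\alpha'=\alpha$, and the finite-family reduction that legitimises the stopping-time selection and the division by $\|M^w_{\mathcal{D}}h\|_{L^\alpha(w)}$ — all check out.
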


Let us then prove part I) of Theorem \ref{thm:main_sparse_bounds}. Suppose that $1 < p < q < \infty$, and $w \in C_q$. We want to show that there exists $1 < s < 2$ such that
$$
  s'\sum_{Q\in \mathcal S} \langle |f|\rangle_{Q}  \langle |gw|\rangle_{s,Q}|Q| \le c_{w,n,p,q} \|Mf\|_{L^p(w)} \|g\|_{L^{p'}(w)}.
$$
By rescaling we may assume that $\|Mf\|_{L^p(w)} = \|g\|_{L^{p'}(w)} = 1$. To simplify the notation, we also assume $f,g \geq 0$. By the remark we made in Section \ref{section:definitions} when we defined sparse collections, we may assume that $\Sc \subset \Ds$.

Let $\delta$ be the Reverse H\"older constant from Theorem \ref{thm:rhi_cp} and set $s = 1 +\frac{\delta}{8p}$ and $r = 1 + \frac{1}{4p}$. It is easy to check that
\begin{equation}
  \label{estimate:s_and_r}
  sr<1+\frac 1{2p}<p' \quad \mbox{and} \quad  \Big(s - \frac{1}{r}\Big)r' = s + \frac{s-1}{r-1}<1+\delta.
\end{equation}
In particular, $(s-\frac 1r)r'$ is an admissible exponent for the Reverse H\"older inequality in Theorem~\ref{thm:rhi_cp}. Therefore, by H\"older's inequality and Theorem \ref{thm:rhi_cp} we have
\begin{align*}
  \sum_{Q \in \Sc} \langle f \rangle_{Q}  \langle g w\rangle_{s,Q}|Q|
  &\le \sum_{Q\in \Sc} \langle f \rangle_{Q}  \langle g ^{sr}w\rangle_{Q}^{\frac 1{sr}} \langle w^{(s-\frac 1r)r'}\rangle_Q^{\frac 1{sr'}}|Q|\\
  &\lesssim  \sum_{Q\in \Sc} \langle f \rangle_{Q}  \langle g ^{sr}w\rangle_{Q}^{\frac 1{sr}} \Big(\frac 1{|Q|} \int_{\R^n} (M1_Q)^q w \Big)^{1-\frac{1}{sr}} |Q|\\
  &=\sum_{Q \in \Sc} \langle f \rangle_{Q}   \Big( \int_{\R^n} (M1_Q)^q w \Big)^{1-\frac 1{sr}} \big(\langle g ^{sr}\rangle_{Q}^w \big)^{\frac {1}{sr}} w(Q)^{\frac 1{sr}}.
\end{align*}
Let us then split the sparse family into two parts. We set
$$
  \Sc_1 \coloneqq \Big\{Q \in \Sc \colon \big(\langle g ^{sr}\rangle_{Q}^w\big)^{\frac 1{sr}}  w(Q)^{\frac{1}{sr}} \le \langle f \rangle_Q^{\frac{p}{p'}} \Big(\int_{\R^n} (M1_Q)^q w\Big)^{\frac{1}{sr}} \Big\}
$$
and $\Sc_2 = \Sc \setminus \Sc_1$. For the collection $\Sc_1$, we use Corollary \ref{Corollary-norm} to see that
\begin{align*}
  \sum_{Q \in \Sc_1} \langle f \rangle_{Q} & \Big( \int_{\R^n} (M1_Q)^q w \Big)^{1-\frac 1{sr}} \big(\langle g ^{sr}\rangle_{Q}^w\big)^{\frac 1{sr}} w(Q)^{\frac 1{sr}} \\
  &\le \sum_{Q \in \Sc_1} \langle f \rangle_{Q} \Big( \int_{\R^n} (M1_Q)^q w \Big)^{1-\frac{1}{sr}} \langle f \rangle_Q^{\frac{p}{p'}} \Big( \int_{\R^n} (M1_Q)^q w \Big)^{\frac{1}{sr}} \\
  &= \sum_{Q\in \Sc_1} \langle f \rangle_{Q}^p\int_{\R^n} (M1_Q)^q w \\
  &\le c_{n,p,q} ([w]_{C_q}+1)^2\log ([w]_{C_q}+e) \| Mf\|_{L^p(w)}^p \\
  &= c_{n,p,q} ([w]_{C_q}+1)^2\log ([w]_{C_q}+e).
\end{align*}
The collection $\Sc_2$ is trickier. Recall that by Remark \ref{Remark-finiteness}, for any cube $Q$, $\int_{\R^n} (M1_Q)^q w < \infty$. Thus, we have
\begin{align*}
  \sum_{Q \in \Sc_2} \langle f \rangle_{Q} & \big(\langle g^{sr} \rangle_{Q}^w\big)^{\frac {1}{sr}}  \Big( \int_{\R^n} (M1_Q)^q w \Big)^{1-\frac 1{sr}}w(Q)^{\frac 1{sr}} \\
  &\le \sum_{Q \in \Sc_2} \big(\langle g^{sr} \rangle_{Q}^w\big)^{\frac{p'}{psr}}w(Q)^{\frac{p'}{psr}} \big(\langle g^{sr} \rangle_{Q}^w\big)^{\frac {1}{sr}}  \Big( \int_{\R^n} (M1_Q)^q w \Big)^{1 - \frac 1{sr} - \frac{p'}{psr}} w(Q)^{\frac 1{sr}} \\
  &\le \sum_{Q \in \Sc_2} \big(\langle g^{sr} \rangle_{Q}^w\big)^{\frac{p'}{sr}} w(Q) \Big(\frac{w(Q)}{\int_{\R^n} (M1_Q)^q w}\Big)^{\frac{p'}{psr}+\frac{1}{sr}-1} \\
  &= \sum_{Q \in \Sc_2} \big(\langle g^{sr} \rangle_{Q}^w\big)^{\frac{p'}{sr}} w(Q) \Big(\frac{w(Q)}{\int_{\R^n} (M1_Q)^q w}\Big)^{\frac{p'}{sr}-1}.
\end{align*}
We set $\alpha = \frac{p'}{sr}$ and
$$
  a_Q := w(Q) \Big(\frac{w(Q)}{\int_{\mathbb R^n}M(\chi_Q)^q w}\Big)^{\frac{p'}{sr}-1}
$$
for every cube $Q \in \Sc_2$. By \eqref{estimate:s_and_r}, we know that $\alpha > 1$. Suppose that there exists some $A>0$ such that for any $R \in \mathcal S_2$ we have
\begin{equation}
  \label{claim:carleson}
  \sum_{Q \in \Sc_2, Q \subset R} a_Q \le A w(R).
\end{equation}
Then, by the Carleson embedding (Theorem \ref{thm:carleson_embedding}), we know that
\begin{align*}
  \sum_{Q \in \Sc_2} \big(\langle g^{sr} \rangle_{Q}^w\big)^{\frac{p'}{sr}} w(Q) \Big(\frac{w(Q)}{\int_{\R^n} (M1_Q)^q w}\Big)^{\frac{p'}{sr}-1}
  &= \sum_{Q \in \Sc_2} a_Q \big(\langle g^{sr} \rangle_{Q}^w\big)^{\alpha} \\
  &\le \big( A^{\frac{1}{\alpha}} \alpha' \| g^{sr}\|_{L^\alpha(w)} \big)^\alpha
  = A (\alpha ')^{\alpha} \|g\|_{L^{p'}(w)}^{p'}
  \le c_p \: A .
\end{align*}
In the last inequality we have used that, by the choices of $r$ and $s$, we have $1<rs<1+\frac1{2p}$ and therefore $p'-rs>p'-1- \frac 1{4p}=\frac{3p+1}{4p(p-1)},$ which gives
\[
\Big( \Big( \frac{p'}{rs} \Big)'\Big) ^\frac{p'}{rs} = \Big( \frac{p'}{p'-rs}\Big) ^{p'} \leq \Big(\frac{4p^2}{3p+1}\Big)^{p'}=c_p .
\]
Thus, it is enough for us to show that there exists a constant $A >0$ such that \eqref{claim:carleson} holds. For this, fix $R \in \Sc_2$. We further split $\Sc_2$ into subcollections $\Sc_{2,j}$, $j \ge 1$, defined as
$$
  \Sc_{2,j} \coloneqq \Big\{ Q \in \Sc_2 \colon 2^{j-1} w(Q) \le \int_{\R^n} (M1_Q)^q w < 2^{j} w(Q) \Big\}.
$$
Let $\Sc_{2,j}^* = \Sc_{2,j}^*(R)$ be the collection of maximal subcubes in $\Sc_{2,j}$ which are contained in $R$. We now have
\begin{align*}
  \sum_{\substack{Q \in \Sc_{2,j} \\ Q \subset R}} w(Q) \Big(\frac{w(Q)}{\int_{\R^n} (M1_Q)^q w}\Big)^{\frac{p'}{sr}-1}
  &\overset{\text{(A)}}{\le} \sum_{\substack{Q \in \Sc_{2,j} \\ Q \subset R}} 2^{1-j} \int_{\R^n} (M1_Q)^q w \Big(\frac{2^{1-j} \int_{\R^n} (M1_Q)^q w}{\int_{\R^n} (M1_Q)^q w}\Big)^{\frac{p'}{sr}-1} \\
  &= 2^{1-j + (1-j)\big( \frac{p'}{sr} -1 \big)} \sum_{\substack{Q \in \Sc_{2,j} \\ Q \subset R}} \int_{\R^n} (M1_Q)^q w \\
  &= 2^{(1-j)\frac{p'}{sr}} \sum_{P \in \Sc_{2,j}^*} \, \sum_{\substack{Q\in \Sc_{2,j} \\ Q\subset P}} \int_{\R^n} (M1_Q)^q w \\
  &\overset{\text{(B)}}{\le} 2^{(1-j)\frac{p'}{sr}} \big( [w]_{C_q} + 1 \big) \sum_{P \in \Sc_{2,j}^*} \int_{\R^n} (M1_P)^q w \\
  &\overset{\text{(A)}}{\le} 2^{(1-j)\frac{p'}{sr} + j} \big( [w]_{C_q} + 1 \big) \sum_{P \in \Sc_{2,j}^*} w(P) \\
  &\overset{\text{(C)}}{\le} 2^{(1-j)\frac{p'}{sr} + j} \big( [w]_{C_q} + 1 \big) w(R),
\end{align*}
where we used (A) the definition of the collection $\Sc_{2,j}$, (B) Lemma \ref{summation-sparse-lemma} and (C) the fact that the cubes in $\Sc_{2,j}^*$ are disjoint. We now sum over $j$ and get
$$
  \sum_{\substack{Q \in \Sc_{2} \\ Q \subset R }} a_Q
   = \sum_{j \ge 1} \sum_{\substack{Q \in \Sc_{2,j} \\ Q \subset R }} a_Q
   \le \big( [w]_{C_q} + 1 \big) 2^{\frac{p'}{sr}} \sum_{j\geq 1} 2^{j\big(1-\frac{p'}{sr}\big)} w(R).
$$
Therefore \eqref{claim:carleson} holds with
\[
A := \big( [w]_{C_q} + 1 \big) 2^{\frac{p'}{sr}} \sum_{j\geq 1} 2^{j\big(1-\frac{p'}{sr}\big)} = 2 \frac{\big( [w]_{C_q}+1\big)} {1-2^{1-p'/sr}} \leq \tilde c_p  \big( [w]_{C_q}+1\big).
\]
Putting all of the above together, we proved that for $s = 1 + \frac{\delta}{8p}$ we have
$$
  s'\sum_{Q\in \mathcal S} \langle |f|\rangle_{Q}  \langle |gw|\rangle_{s,Q}|Q|
  \le s' \big( c_{n,p,q} ([w]_{C_q}+1)^2\log ([w]_{C_q}+e) + c_p   \big( [w]_{C_q}+1\big) \big).
$$
The constant $c_{n,p,q}$ is the same constant as in Corollary \ref{Corollary-norm} and thus, we have
$$
  c_{n,p,q} \approx c_n 2^{c_n' \frac{pq}{q-p}}
$$
by \cite[Lemma 5.8]{canto}. In particular, $c_{n,p,q} \to \infty$ as $q \to p$. Since $\delta = \frac{1}{B \max\{[w]_{C_q},1\}}$ where $B = B(n,q)$ (see \cite[Theorem 2.13]{canto}), we have
$$
  s' = \frac{8p}{\delta} + 1 \approx 8p B \max\{[w]_{C_q},1\}.
$$
Hence we see that
$$
  s'\sum_{Q\in \mathcal S} \langle |f|\rangle_{Q}  \langle |gw|\rangle_{s,Q}|Q|
  \le C_{n,p,q} \big([w]_{C_q} + 1 \big)^3 \log\big( [w]_{C_q} + e \big)
$$
for a constant $C_{n,p,q}$ such that $C_{n,p,q} \to \infty$ as $q \to p$.

\section{Sparse domination for rough singular integrals revisited} \label{section:sparse}

Before we prove part II) of Theorems \ref{thm:main_result_rough} and \ref{thm:main_sparse_bounds}, we revisit the sparse domination principle in \cite{condealonsoculiucdiplinioou} and prove a version of it that is more suitable for the case $0 < p < 1$. Let us first consider a Calder\'on--Zygmund operator $T$. It is now well-known (see e.g. \cite{lacey, hytonenroncaltapiola, lerner_pointwise}) that $T$ satisfies a pointwise sparse bound of the type
$$
  Tf(x) \le C_T \sum_{i, Q \in \Sc_i} 1_Q (x) \langle |f| \rangle_Q.
$$
Now, for $0 < p < 1$, we trivially have
$$
  |Tf(x)|^p \le C_T^p \sum_{i, Q \in \Sc_i} 1_Q(x) \langle |f| \rangle_Q^p,
$$
and thus, for $q = 1 + \lambda$ and $w \in C_q$ for any $\lambda > 0$, Corollary \ref{Corollary-norm} gives us
\begin{align*}
  \int_{\R^n} |Tf|^p w &\le C_T^p \sum_{i, Q \in \Sc_i} w(Q) \langle |f| \rangle_Q^p \\
                       &\le C_T^p \sum_{i, Q \in \Sc_i} \langle |f| \rangle_Q^p \int_{\R^n} (M1_Q)^q w \\
                       &\le C_T^p c_{n,p,q} ([w]_{C_q}+1)^2\log ([w]_{C_q}+e) \| Mf\|_{L^p(w)}^p.
\end{align*}
Qualitative version of this result was proven recently as a part of \cite[Theorem 17]{cejasliperezrivera-rios} using different techniques.

To mimic this proof strategy for rough homogeneous singular integrals, we prove the following sparse domination result:

\begin{theorem}
  \label{thm:sparse_domination}
  Suppose that $0 < \theta < 1$ and $1 < s \leq \frac{1}{1-\theta}$. Then there exists a sparse collection $\Sc$ such that
  \begin{equation*}
    |\langle  |T_\Omega f|^\theta, g \rangle| \lesssim  (s')^\theta\|\Omega\|_{L^\infty(\Sb^{n-1})}^\theta \sum_{Q\in \Sc}|Q| \langle |f| \rangle_Q^\theta \langle |g| \rangle_{s,Q}.
  \end{equation*}
\end{theorem}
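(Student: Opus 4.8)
The plan is to revisit the proof of Theorem~\ref{thm:condealonsoetal} from \cite{condealonsoculiucdiplinioou} (in the streamlined form of \cite{lerner_weak-type}) and to run the underlying stopping-time recursion on the quantity $\langle|T_\Omega(\cdot)|^{\theta},g\rangle$ instead of on the bilinear form $\langle T_\Omega\cdot,g\rangle$ itself. As in the proof of part~I) we may assume $f,g\ge 0$ with $f$ bounded and compactly supported and $g\in L^{s}_{\mathrm{loc}}$, and by the one-third trick it is enough, for a fixed dyadic lattice $\mathcal D$ and a fixed large cube $Q_0\in\mathcal D$, to produce a $\tfrac12$-sparse family $\Sc\subset\mathcal D$ with $Q_0\in\Sc$ such that
\[
  \int_{Q_0}\bigl|T_\Omega(f1_{3Q_0})\bigr|^{\theta}g
  \lesssim (s')^{\theta}\|\Omega\|_{L^\infty}^{\theta}\sum_{Q\in\Sc}|Q|\,\langle f\rangle_{3Q}^{\theta}\langle g\rangle_{s,Q};
\]
replacing the dilates $3Q$ by $Q$ at the end (again by the one-third trick, which only produces a finite union of sparse families) and letting $Q_0$ exhaust $\R^n$ then gives the theorem. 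Everything reduces to a one-step estimate: there is a family $\Ec(Q_0)$ of pairwise disjoint cubes of $\mathcal D$ contained in $Q_0$ with $\sum_{P\in\Ec(Q_0)}|P|\le\tfrac12|Q_0|$ and
\[
  \int_{Q_0}\bigl|T_\Omega(f1_{3Q_0})\bigr|^{\theta}g
  \lesssim (s')^{\theta}\|\Omega\|_{L^\infty}^{\theta}|Q_0|\,\langle f\rangle_{3Q_0}^{\theta}\langle g\rangle_{s,Q_0}
  +\sum_{P\in\Ec(Q_0)}\int_{P}\bigl|T_\Omega(f1_{3P})\bigr|^{\theta}g,
\]
which is then iterated, the leftover tails vanishing in the limit by the usual argument (restrict a priori to cubes of side length $\ge 2^{-L}$ and send $L\to\infty$).

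First I would set up the stopping cubes: for a large structural constant $\Lambda$, let $\Ec(Q_0)$ be the maximal cubes $P\in\mathcal D$, $P\subset Q_0$, for which $\langle f\rangle_{3P}>\Lambda\langle f\rangle_{3Q_0}$ or $\langle g^{s}\rangle_{P}>\Lambda\langle g^{s}\rangle_{Q_0}$; the weak $(1,1)$ bound for the dyadic maximal operator forces $\sum_{P\in\Ec(Q_0)}|P|\le\tfrac12|Q_0|$ once $\Lambda$ is large. Writing the Calder\'on--Zygmund-type decomposition $f1_{3Q_0}=\gamma+\sum_i\beta_i$ relative to these cubes, we have $\operatorname{supp}\gamma\subset 3Q_0$, $\|\gamma\|_{L^\infty}\lesssim\Lambda\langle f\rangle_{3Q_0}$, and each $\beta_i$ is supported in some $R_i\in\Ec(Q_0)$ with $\int\beta_i=0$. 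Since $0<\theta<1$, $t\mapsto t^{\theta}$ is subadditive, so $\bigl|T_\Omega(f1_{3Q_0})\bigr|^{\theta}\le|T_\Omega\gamma|^{\theta}+\bigl|T_\Omega(\sum_i\beta_i)\bigr|^{\theta}$, and it is precisely this elementary splitting — lossless for $\theta<1$ but useless for $\theta=1$ — that lets the two pieces be treated by genuinely different arguments.

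The good part $\int_{Q_0}|T_\Omega\gamma|^{\theta}g$ is handled by H\"older and the $L^{\rho}$-boundedness of $T_\Omega$. Put $\rho:=\theta(s'+1)$. The hypothesis $s\le\tfrac1{1-\theta}$, i.e.\ $s'\ge\tfrac1\theta$, gives $\rho\ge 1+\theta>1$, and one checks the elementary identity $(\rho/\theta)'=(s'+1)'=2-\tfrac1s\le s$. Hence, using $\|T_\Omega\|_{L^{\rho}\to L^{\rho}}\lesssim_{\theta} s'\|\Omega\|_{L^\infty}$ for this range of $\rho$ and $\|\gamma\|_{L^{\rho}}\lesssim\Lambda\langle f\rangle_{3Q_0}|Q_0|^{1/\rho}$,
\[
  \int_{Q_0}|T_\Omega\gamma|^{\theta}g
  \le\|T_\Omega\gamma\|_{L^{\rho}(Q_0)}^{\theta}\,\|g\|_{L^{(\rho/\theta)'}(Q_0)}
  \lesssim (s')^{\theta}\|\Omega\|_{L^\infty}^{\theta}|Q_0|\,\langle f\rangle_{3Q_0}^{\theta}\langle g\rangle_{s,Q_0},
\]
the powers of $|Q_0|$ summing to $1$ because $\tfrac{\theta}{\rho}+\tfrac1{(\rho/\theta)'}=\tfrac1{s'+1}+\tfrac{s'}{s'+1}=1$, and $\Lambda^{\theta}$ being absorbed. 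Passing to the $\theta$-th power is essential here: it keeps the output integrability exponent $\rho$ of $T_\Omega$ strictly above $1$, whereas for $\theta=1$ a single stopping cube could already let $T_\Omega(f1_{3Q_0})$ concentrate like $|x-x_0|^{-n}\in L^{1,\infty}\setminus L^{1}_{\mathrm{loc}}$.

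The hard part, and the genuine obstacle, is the bad term $\int_{Q_0}\bigl|T_\Omega(\sum_i\beta_i)\bigr|^{\theta}g$: because the kernel of $T_\Omega$ is only homogeneous of degree $-n$ and carries no smoothness, the classical off-support cancellation estimate for $T_\Omega\beta_i$ is unavailable, which is exactly where the rough theory departs from the Calder\'on--Zygmund one. Here I would reproduce, with the bookkeeping changes forced by the $\theta$-th power, the Fourier-analytic estimates behind Theorem~\ref{thm:condealonsoetal} in \cite{condealonsoculiucdiplinioou} (see also \cite{lerner_weak-type,duoandikoetxearubiodefrancia}): decompose $T_\Omega=\sum_m T_m$, where $T_m$ has kernel supported in $|y|\sim 2^m$, and combine the crude size bound $\|T_m\|_{L^1\to L^1}\lesssim\|\Omega\|_{L^\infty}$ with the decaying $L^2\to L^2$ estimates for the $T_m$ acting on functions that are flat at scale $2^m$. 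Organizing the sum over $m$ against the two stopping structures so that the annuli comparable to the running scale land in the main term, the small-scale annuli are fed into the recursion, and the remaining non-local annuli are controlled via the $L^2$-decay and $s>1$, one uses the subadditivity of $t\mapsto t^{\theta}$ once more, together with a Kolmogorov-type passage from the resulting $L^{s}$ block estimates to the $\theta$-th power on the $f$-side, to obtain the required bound for the bad term; combined with the good-part estimate this closes the recursion with the stated dependence $(s')^{\theta}\|\Omega\|_{L^\infty}^{\theta}$. I expect the bulk of the technical work to be in this faithful adaptation of \cite{condealonsoculiucdiplinioou}.
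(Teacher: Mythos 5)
There is a genuine gap in the bad-part estimate, and it is where the entire difficulty of the result lives. You set up a Calder\'on--Zygmund decomposition of $f1_{3Q_0}$ and use the subadditivity of $t\mapsto t^\theta$ to split off $|T_\Omega\gamma|^\theta$ and $|T_\Omega(\sum_i\beta_i)|^\theta$, then estimate the good part by H\"older and the $L^\rho$ bound for $T_\Omega$ — that piece is fine. But for the bad part you correctly observe that the rough kernel kills the pointwise off-support decay of $T_\Omega\beta_i$, and then propose to ``reproduce, with bookkeeping changes, the Fourier-analytic estimates behind Theorem~\ref{thm:condealonsoetal}.'' This is not a proof but an intention, and it is moreover a misleading description of what would actually be required. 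The argument in \cite{condealonsoculiucdiplinioou} does not run a Calder\'on--Zygmund decomposition of $f$ against the annular pieces $T_m$ the way your outline suggests; it is a different, bilinear stopping-time scheme in which the annular decomposition interacts with both $f$ and $g$. Pushing a $\theta$-power through that scheme, so that it closes against the sparse form $\sum |Q|\langle f\rangle_Q^\theta\langle g\rangle_{s,Q}$ and still produces the constant $(s')^\theta$, is substantial new work and there is no evidence in your proposal that it works.

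The paper takes a much shorter and structurally different route that avoids re-proving any of this. It introduces the grand maximal operator $\Ms_T^\theta(f,g)(x)=\sup_{Q\ni x}\frac{1}{|Q|}\int_Q|T(f1_{\R^n\setminus 3Q})|^\theta|g|$ and proves a $\theta$-power variant of Lerner's abstract sparse domination theorem (Theorem~\ref{thm:lerner_maximal}): if $T$ has weak type $(q,q)$ and $\Ms_T^\theta$ satisfies a suitable $L^{\nu,\infty}$ bound, then the sparse bound for $\langle|Tf|^\theta,|g|\rangle$ holds; the only modification to Lerner's proof is in the definition of the two exceptional sets. The required weak-type bound for $\Ms_{T_\Omega}^\theta$ is then obtained by a pointwise H\"older factorisation
\begin{equation}
\Ms_{T_\Omega}^\theta(f,g)(x)\le \bigl(\Ns^1_{s'\theta,T_\Omega}f(x)\bigr)^\theta\,M_sg(x),
\qquad \Ns^1_{t,T_\Omega}f(x):=\sup_{Q\ni x}\Bigl(\frac{1}{|Q|}\int_Q|T_\Omega(f1_{\R^n\setminus 3Q})|^t\Bigr)^{1/t},
\end{equation}
followed by H\"older for weak spaces and the weak type $(s,s)$ of $M_s$. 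The last and decisive input is Lerner's already established weak $(1,1)$ bound
$\|\Ns^1_{t,T_\Omega}f\|_{L^{1,\infty}}\lesssim t\|\Omega\|_\infty\|f\|_{L^1}$ for $t\ge 1$ (\cite[Theorem~1.1, Lemma~3.3]{lerner_weak-type}), applied with $t=s'\theta$; the hypothesis $s\le\frac{1}{1-\theta}$ is exactly the condition $s'\theta\ge 1$ needed here. All of the \cite{condealonsoculiucdiplinioou}-style Fourier analysis is thus packaged inside a result of Lerner that the paper simply cites, and the whole argument reduces to a one-page verification. To repair your proposal you would either have to actually carry out the Fourier-analytic adaptation you only sketch, or switch to the grand-maximal framework and invoke Lerner's weak $(1,1)$ estimate as the paper does; the latter is strongly recommended.
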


Our proof is strongly based on techniques used by Lerner in \cite{lerner_weak-type}. For a sublinear operator $T$ and $0 < \theta < 1$, we define
$$
  \Ms_{T}^\theta (f,g)(x) \coloneqq \sup_{Q \ni x} \frac{1}{|Q|} \int_Q |T(f1_{\R^n \setminus 3Q})|^\theta |g| \, dy.
$$
Our main tool is the following variant of \cite[Theorem 3.1]{lerner_weak-type}:

\begin{theorem}
  \label{thm:lerner_maximal}
  Let $1 \le q\le r$, $0< \theta < 1$ and $s \ge 1$. Assume that $T$ is a sublinear operator of weak type $(q,q)$ and $\Ms_T^\theta$ satisfies the following estimate:
$$
    \|\mathscr M_T^\theta(f,g)\|_{L^{\nu,\infty}}\le N \|f\|_{L^r}^\theta \|g\|_{L^s},
$$
  for exponents satisfying the relation
  \begin{align*}
    \frac{1}{\nu} = \frac{\theta}{r} + \frac{1}{s}.
  \end{align*}
  Then for every compactly supported $f \in L^r(\R^n)$ and every $g \in L_{\text{loc}}^s$, there exists a sparse collection of cubes $\Sc$ such that
$$
    \langle |Tf|^\theta, |g| \rangle \le C_{T,N} \sum_{Q \in \Sc} |Q| \langle |f| \rangle_{r,Q}^\theta \langle |g| \rangle_{s,Q},
$$
  where
$
    C_{T,N} \coloneqq c_n \big(\|T\|_{L^q\rightarrow L^{q,\infty}}^\theta+N \big).
$
\end{theorem}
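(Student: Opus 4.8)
The plan is to follow the standard local-oscillation/stopping-time scheme for producing a sparse bound, adapted to the $\theta$-power setting. I would argue by induction on the side length of an initial cube containing $\operatorname{supp} f$, or equivalently, perform a single stopping-time construction on a dyadic lattice and then iterate. Fix a large dyadic cube $Q_0$ containing $\operatorname{supp} f$; it suffices to bound $\int_{Q_0} |Tf|^\theta |g|$ and sum over the (boundedly many) translates of the dyadic grid needed to cover all relevant cubes. On $Q_0$ write the pointwise-in-form decomposition
\[
\int_{Q_0} |Tf|^\theta |g| \le \int_{Q_0} |T(f 1_{3Q_0})|^\theta |g| + \int_{Q_0} |T(f 1_{\R^n \setminus 3Q_0})|^\theta |g|.
\]
The second (far) term is immediately controlled by the hypothesis on $\Ms_T^\theta$: indeed for $x \in Q_0$ one has $|T(f 1_{\R^n \setminus 3Q_0})(x)|^\theta \le$ a term comparable to $\langle |f|\rangle_{r,3Q_0}^\theta$ by the weak-type $L^{\nu,\infty}$ estimate combined with Kolmogorov's inequality (here the relation $\tfrac1\nu = \tfrac\theta r + \tfrac1s$ is exactly what makes the Hölder bookkeeping close), producing the single sparse term $|Q_0|\langle|f|\rangle_{r,Q_0}^\theta \langle|g|\rangle_{s,Q_0}$ with constant $\lesssim N$.

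For the local term I would run the Calderón–Zygmund stopping time: let $E_0 \subset Q_0$ be the set where either $M_r(f1_{3Q_0}) > C\langle|f|\rangle_{r,Q_0}$ or $M_s(g1_{Q_0}) > C\langle|g|\rangle_{s,Q_0}$ or $\Ms_T^\theta(f1_{3Q_0}, g1_{Q_0}) > C\,N\,\langle|f|\rangle_{r,Q_0}^\theta\langle|g|\rangle_{s,Q_0}$; by the weak-type bounds for $M_r$, $M_s$ and the hypothesis on $\Ms_T^\theta$ (again via Kolmogorov), choosing $C$ a large dimensional constant forces $|E_0| \le \tfrac12|Q_0|$. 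Let $\{Q_j\}$ be the maximal dyadic cubes contained in $\{M(\cdots) > C(\cdots)\}$; these cover $E_0$ up to measure zero, are disjoint, and satisfy $\sum_j |Q_j| \le \tfrac12|Q_0|$, so $E_{Q_0} := Q_0 \setminus \bigcup_j Q_j$ has measure $\ge \tfrac12|Q_0|$. On $Q_0 \setminus \bigcup_j Q_j$ one replaces $|T(f1_{3Q_0})|^\theta$ by a sum over the $Q_j$'s of $|T(f1_{3Q_j})|^\theta$ plus an error that is pointwise $\lesssim \Ms_T^\theta(f1_{3Q_0},g1_{Q_0})$-type and hence $\lesssim N\langle|f|\rangle_{r,Q_0}^\theta\langle|g|\rangle_{s,Q_0}$ off the stopping cubes; this uses sublinearity of $T$ together with $|a+b|^\theta \le |a|^\theta + |b|^\theta$ for $0<\theta<1$, which is precisely where the $\theta$-power (rather than the usual linear form) is essential and harmless. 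One then iterates the construction inside each $Q_j$, collecting the initial cube of each stage into the sparse family $\Sc$; the geometric decay $\sum|Q_j|\le\frac12|Q_0|$ at each level guarantees $\Sc$ is $\tfrac12$-sparse, and summing the per-cube contributions $\lesssim (\|T\|_{L^q\to L^{q,\infty}}^\theta + N)\,|Q|\langle|f|\rangle_{r,Q}^\theta\langle|g|\rangle_{s,Q}$ over $\Sc$ yields the claim with $C_{T,N} = c_n(\|T\|_{L^q\to L^{q,\infty}}^\theta + N)$.

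The main obstacle I expect is the error-term analysis when peeling off the stopping cubes: one must show that for $x \in Q_0\setminus\bigcup_j Q_j$ the quantity $|T(f1_{3Q_0})(x)|^\theta - \sum_j 1_{Q_j}(x)|T(f1_{3Q_j})(x)|^\theta$ is pointwise dominated (after averaging against $|g|$ over the right cube) by a constant multiple of $\Ms_T^\theta(f1_{3Q_0},g1_{Q_0})$ evaluated at a stopping parent, which is where the hypothesis on $\Ms_T^\theta$ gets used a second time and where the weak-type $(q,q)$ assumption on $T$ enters to handle the piece $f 1_{3Q_0\setminus 3Q_j}$ near each stopping cube. Getting the interplay between the $3Q$-dilations in the definition of $\Ms_T^\theta$ and the dyadic stopping cubes to match up — so that $f1_{3Q_j}$ and $f1_{\R^n\setminus 3Q_j}$ split cleanly relative to the lattice — is the delicate bookkeeping; this is handled exactly as in \cite[Theorem 3.1]{lerner_weak-type}, the only genuine modification being the systematic use of subadditivity of $t\mapsto t^\theta$ in place of the triangle inequality, which if anything simplifies the estimates since no $L^1$-type cancellation across scales is needed.
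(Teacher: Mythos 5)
Your overall scheme (local/far split, stopping-time on an initial dyadic cube, recursion, subadditivity of $t\mapsto t^\theta$) is the right one and is the same as the paper's, which simply follows \cite[Theorem~3.1]{lerner_weak-type}. However, there is a genuine gap in your choice of stopping conditions: your exceptional set $E_0$ consists of the sets where $M_r(f1_{3Q_0})$, $M_s(g1_{Q_0})$ or $\Ms_T^\theta(f1_{3Q_0},g1_{Q_0})$ are large, but it does not contain the level set of $|T(f1_{3Q_0})|$ itself. This is precisely the set that the paper takes as
\[
E_1=\{x\in Q_0:\ |T(f1_{3Q_0})(x)|>A\,\langle |f|\rangle_{q,3Q_0}\},
\]
whose measure is controlled by the weak-type $(q,q)$ assumption on $T$, and it is indispensable: on the good region $Q_0\setminus\bigcup_j Q_j$, none of your three stopping conditions yields any pointwise control on $|T(f1_{3Q_0})(x)|^\theta$. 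The maximal operator $\Ms_T^\theta$ only sees $T(f1_{\R^n\setminus 3Q})$ for cubes $Q\ni x$, so it cannot recover $|T(f1_{3Q_0})(x)|^\theta$ (the difference $T(f1_{3Q\cap 3Q_0})$ is invisible to it), and bounds on $M_r f$ and $M_s g$ say nothing about $Tf$. Hence the term $\int_{Q_0\setminus\bigcup_j Q_j}|T(f1_{3Q_0})|^\theta|g|$ is not estimated by your construction. Relatedly, you attribute the use of the weak $(q,q)$ bound to the treatment of $f1_{3Q_0\setminus 3Q_j}$ near the stopping cubes, but that piece is exactly what the $\Ms_T^\theta$ hypothesis handles (since $f1_{3Q_0\setminus 3Q_j}=(f1_{3Q_0})1_{\R^n\setminus 3Q_j}$); the weak $(q,q)$ assumption enters only through $E_1$, and this is also where the contribution $\|T\|_{L^q\to L^{q,\infty}}^\theta$ to the final constant $C_{T,N}$ originates. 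Once you replace your $M_r$/$M_s$ stopping conditions by the level set $E_1$ and keep the local version of the $\Ms_T^\theta$ level set as your $E_2$, the argument runs exactly as in Lerner's Theorem~3.1. Two smaller points: the ``far'' term $\int_{Q_0}|T(f1_{\R^n\setminus 3Q_0})|^\theta|g|$ is identically zero once $\operatorname{supp} f\subset Q_0\subset 3Q_0$, so no Kolmogorov argument is needed there at the initial scale; and your narrative that ``on $Q_0\setminus\bigcup_j Q_j$ one replaces $|T(f1_{3Q_0})|^\theta$ by a sum over the $Q_j$'s of $|T(f1_{3Q_j})|^\theta$'' conflates the good region with the stopping cubes — the replacement happens on each $Q_j$, while the good region is handled by $E_1^c$.
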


\begin{proof}
  The proof is essentially the same as the proof of \cite[Theorem 3.1]{lerner_weak-type}. The only difference is the definition of the sets $E_1$ and $E_2$: the first set is the same, namely
$$
    E_1 = \{x \in Q_0 \colon |T(f1_{3Q_0})| > A \langle |f|\rangle_{q, 3Q_0} \},
$$
  and we define the second set as
 $$
    E_2 = \{x \in Q_0 \colon \Ms_{T, Q_0}^\theta(f,g)(x) > B \langle |f|\rangle_{r, 3Q_0}^\theta \langle |g|\rangle_{s, Q_0}\}.
$$
  The rest of the proof works as it is with the the obvious changes.
\end{proof}

With the help of Theorem \ref{thm:lerner_maximal}, the proof of Theorem \ref{thm:sparse_domination} is fairly straightforward.

\begin{proof}[Proof of Theorem \ref{thm:sparse_domination}]
  Let $T_\Omega$ be a rough homogeneous singular integral. We want to apply Theorem \ref{thm:lerner_maximal} with $q = 1 = r$. Let $1 < s \le \frac{1}{1-\theta}$. Since $T_\Omega$ is of weak-type $(1,1)$ by \cite{seeger}, we only need to check the bound for $\Ms_{T_\Omega}^\theta$. To be more precise, we need to show that
  \begin{equation}
    \label{pf:to_check}
    \|\Ms_{T_\Omega}^\theta(f,g)\|_{L^{\nu,\infty}} \le N \|f\|_{L^1}^\theta \|g\|_{L^s},
  \end{equation}
  where
$
    \frac{1}{\nu} = \theta + \frac{1}{s}.
$
  Let us define an auxiliary operator $\Ns_{p, T_\Omega}^\theta$ by setting
 $$
    \Ns_{p, T_\Omega}^\theta f(x) = \sup_{Q \ni x} \Big(\frac{1}{|Q|} \int_Q |T_\Omega(f1_{\R^n \setminus 3Q})|^{p\theta} dy \Big)^{\frac{1}{p}}.
$$
  Notice that we have $\Ns_{p, T_\Omega}^\theta f(x) = \big(\Ns_{p\theta, T_\Omega}^1 f(x) \big)^\theta$. By H\"older's inequality, we have the pointwise bound
  \begin{align*}
    \Ms_{T_\Omega}^\theta (f,g)(x)
    &\le \sup_{Q \ni x} \Big( \int_Q \big| T_\Omega ( f1_{\R^n \setminus 3Q}) \big|^{s' \theta} \Big)^{\frac{1}{s'}} \Big( \int_Q |g|^s \Big)^{\frac{1}{s}} \\
    &\le \Ns_{s', T_\Omega}^\theta f(x) M_s g(x)
    = \big( \Ns_{s'\theta, T_\Omega}^1 f(x) \big)^\theta M_s g(x).
  \end{align*}
 Now, combining this pointwise bound with H\"older's inequality for weak spaces (see e.g. \cite[Ex. 1.1.15]{grafakos}), the straightforward estimate $\big\| \big( \Ns_{s'\theta, T_\Omega}^1 f \big)^\theta \big\|_{L^{\frac{1}{\theta},\infty}} = \big\| \Ns_{s'\theta, T_\Omega}^1 f \big\|_{L^{1,\infty}}^\theta$ and the weak type $(s,s)$ of $M_s$ (see Section \ref{section:maximal_and_singular}) we get
  \begin{align*}
    \|\Ms_{T_\Omega}^\theta(f,g)\|_{L^{\nu,\infty}}
    &\le \nu^{-\frac{1}{\nu}} \theta^{-\theta} s^{\frac{1}{s}} \big\| \big( \Ns_{s'\theta, T_\Omega}^1 f \big)^\theta \big\|_{L^{\frac{1}{\theta},\infty}} \|M_s g\|_{L^{s,\infty}}\\
    &\lesssim \nu^{-\frac{1}{\nu}} \theta^{-\theta} s^{\frac{1}{s}} \| \Ns_{s'\theta, T_\Omega}^1 f \|_{L^{1,\infty}}^\theta \|g\|_{L^s}.
  \end{align*}
  By \cite[Theorem 1.1, Lemma 3.3]{lerner_weak-type}, we know that
  $$
    \| \Ns_{s'\theta, T_\Omega}^1 f \|_{L^{1,\infty}} \lesssim s'\theta \|\Omega\|_{L^\infty(\Sb^{n-1})} \|f\|_{L^1},
$$
  provided that $1 \le s'\theta < \infty$ which is equivalent to $1 < s \le \frac{1}{1-\theta}$. Therefore, we have
  \begin{align*}
    \|\Ms_{T_\Omega}^\theta(f,g)\|_{L^{\nu,\infty}}
    &\lesssim \nu^{-\frac{1}{\nu}} \theta^{-\theta} s^{\frac{1}{s}} (s'\theta)^\theta \|\Omega\|_{L^\infty(\Sb^{n-1})}^\theta \|f\|_{L^1}^\theta \|g\|_{L^s} \\
    &\lesssim (s')^\theta\|\Omega\|_{L^\infty(\Sb^{n-1})}^\theta \|f\|_{L^1}^\theta \|g\|_{L^s},
  \end{align*}
since $\theta < 1 < s$, $\nu = s/(\theta s +1)$ and
$$
 \nu^{-\frac{1}{\nu}} \theta^{-\theta} s^{\frac{1}{s}} (s'\theta)^\theta  = s^{-\theta} (s')^\theta (s\theta+1)^{\theta + \frac 1s} \lesssim s^{\frac 1s} (s')^\theta \lesssim (s')^\theta.
$$
Thus, \eqref{pf:to_check} holds for $N = c_n (s')^\theta\|\Omega\|_{L^\infty(\Sb^{n-1})}^\theta$. Since $\|T_\Omega\|_{L^1 \to L^{1,\infty}} \lesssim \|\Omega\|_{L^\infty(\Sb^{n-1})}$ by \cite{seeger}, we can apply \ref{thm:lerner_maximal}, which finishes the proof.
\end{proof}

\section{Proof of part II) of Theorems \ref{thm:main_result_rough} and \ref{thm:main_sparse_bounds}} \label{section:proof_part_ii}

Firstly, we deduce part II) of Theorem \ref{thm:main_result_rough} from the sparse domination presented in Theorem \ref{thm:sparse_domination} and the bound for the sparse form from Theorem \ref{thm:main_sparse_bounds}. Let $0<p\le 1$, we have
\[
\| T_\Omega f\|_{L^p(w)} = \| |T_\Omega f|^p \|_{L^{1}(w)}^\frac{1}{p} =   | \langle |T_\Omega f|^p, w \rangle |^\frac 1p.
\]
Now, we use Theorem \ref{thm:sparse_domination} to dominate the term $|\langle |T_\Omega f|^p, w \rangle |$, and apply part II) of Theorem~\ref{thm:main_sparse_bounds}. We get
\begin{align*}
| \langle |T_\Omega f|^p, w \rangle |^\frac 1p
	& \lesssim \| \Omega\|_{L^\infty}   s' \Big( \sum_{Q\in \Sc} |Q| \langle f \rangle_Q^p \langle w \rangle_{s,Q} \Big)^\frac1p \\
	& \le  c_{n,p,q}  \| \Omega\|_{L^\infty}  \big( [w]_{C_q} + 1 \big)^{1+\frac2p} \log^{\frac1p} \big( [w]_{C_q} + e\big) \| Mf\|_{L^p(w)}.
\end{align*}

We now turn to the proof of part II) of Theorem \ref{thm:main_sparse_bounds}. Suppose that $0 < p \leq 1$, $w \in C_q$ for some $q > 1$ and $\Sc$ is a sparse collection. We want to show that there exists $1 < s < \min\{2,\frac 1{1-p}\}$ such that
$$
  (s')^p \sum_{Q\in \Sc} |Q| \langle |f| \rangle_Q^p \langle w \rangle_{s,Q} \le c_{w,p,q,n} \|Mf\|_{L^p(w)}^p .
$$
We choose $s = 1+ p\delta$, where $\delta$ is the Reverse H\"older exponent from Theorem \ref{thm:rhi_cp}. Hence $s' \lesssim ([w]_{C_q}+1)/p$ and we have
\begin{align*}
(s')^p \sum_{Q\in \Sc} |Q| \langle |f| \rangle_Q^p \langle w \rangle_{s,Q}
&\lesssim  \Big(\frac{[w]_{C_q}+1}p\Big)^p\sum_{Q\in \Sc}  \langle |f| \rangle_Q^p \int \big(M1_Q\big)^q w\\
&\lesssim p^{-p} \big( [w]_{C_q} + 1 \big)^{p+2} \log  \big( [w]_{C_q} + e\big)\|Mf\|_{L^p(w)}^p,
\end{align*}where we have used Corollary \ref{Corollary-norm} in the last step. The implicit constant $c_{n,p,q}$ satisfies $c_{n,p,q} \to \infty$ as $q \to p$ by the same arguments as in the end of Section \ref{section:proof_part_i}. This completes the proof of Theorem \ref{thm:main_sparse_bounds}.

\section{Reverse H\"older and weak self-improving properties of $C_p$} \label{section:RH}

It is well-known that $A_p$ weights are self-improving: if $w \in A_p$, then there exists $\eps > 0$ such that $w \in A_{p-\eps}$ \cite[Lemma 2]{coifmanfefferman}. Since this is a particularly convenient property in many proofs, it would be desirable if $C_p$ weights had a similar property, i.e. for every $w \in C_p$ there existed $\eps > 0$ such that $w \in C_{p+\eps}$. In particular, this property together with Sawyer's results would prove Muckenhoupt's conjecture. Unfortunately, this is not true due to an example by Kahanp\"a\"a and Mejlbro \cite[Theorem 11]{kahanpaamejlbro}. We discuss their counterexample and its generalizations in detail in Section \ref{section:kahanpaamejlbro}.

The failure of this self-improving property raises natural questions about weaker self-improving properties of $C_p$ weights. For example, although the well-known self-improving property of classical Reverse H\"older weights \cite[Lemma 3]{gehring} fails in spaces of homogeneous type \cite[Section 7]{andersonhytonentapiola}, the weights are still self-improving in a weak sense even in this more general setting \cite[Section 6]{andersonhytonentapiola} (see also \cite[Theorem 3.3]{zatorska-goldstein}). Although we show in Section \ref{section:weak_cp} that weakening the definition of $C_p$ in an obvious way does not actually change the structure of the corresponding weight class, various self-improvement and Reverse H\"older questions remain open. In particular:

\begin{openproblem}
  \label{problem:rhi}
  Suppose that $w \in C_p$ for some $1 < p < \infty$ and let $\delta$ be the Reverse H\"older parameter from Theorem \ref{thm:rhi_cp}. Does there exist $c_w > 1$ such that
$$
    \Big( \avgint_Q w^{c(1+\delta)}\Big)^{\frac{1}{c(1+\delta)}} \lesssim \frac{1}{|Q|} \int_{\R^n} M(1_Q)^p w
$$
  for every cube $Q$ and every $1 < c \le c_w$?
\end{openproblem}

In this section, we record two observations related to Problem \ref{problem:rhi}. First, we prove the following analogue to the well-known $A_\infty$ result ``$w \in A_\infty \Rightarrow w^{1+\eps} \in A_\infty$ for small $\eps$'' (see e.g. \cite[Corollary 3.17]{hytonenroncaltapiola}):

\begin{proposition}
  \label{prop:power_weight}
  Let $w \in C_p$ for some $1 < p < \infty$. Then there exists $\eps_0 > 0$ such that $w^{1+\eps} \in C_p$ for every $0 < \eps \leq \eps_0$.
\end{proposition}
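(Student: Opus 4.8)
The plan is to use the quantitative Reverse Hölder characterization of $C_p$ from Theorem~\ref{thm:rhi_cp}. That theorem says $w \in C_p$ if and only if there are constants $C, \delta > 0$ such that
$$
  \Big( \avgint_Q w^{1+\delta} \Big)^{\frac{1}{1+\delta}} \le \frac{C}{|Q|} \int_{\R^n} (M1_Q)^p w
$$
for every cube $Q$. So to prove $w^{1+\eps} \in C_p$ it suffices to find a small $\eps_0 > 0$ such that for all $0 < \eps \le \eps_0$ there are constants $C', \delta' > 0$ with
$$
  \Big( \avgint_Q w^{(1+\eps)(1+\delta')} \Big)^{\frac{1}{(1+\eps)(1+\delta')}} \le \frac{C'}{|Q|} \int_{\R^n} (M1_Q)^p w^{1+\eps}
$$
for every cube $Q$.

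\textbf{Key steps.} First I would pick $\eps_0$ so small that $(1+\eps_0)(1+\delta') \le 1 + \delta$ for a suitable $\delta' > 0$, where $\delta = \delta(n,p,[w]_{C_p})$ is the exponent from Theorem~\ref{thm:rhi_cp}; concretely one can take, say, $1 + \eps_0 = \sqrt{1+\delta}$ and $1 + \delta' = \sqrt{1+\delta}$, or any similar splitting, so that the left-hand side power $w^{(1+\eps)(1+\delta')}$ is a legitimate integrable power controlled by the Reverse Hölder inequality for $w$. Then, using that $\eps \le \delta$ (after shrinking $\eps_0$ if needed so that $1+\eps \le 1+\delta$), the one-sided bound
$$
  \Big( \avgint_Q w^{(1+\eps)(1+\delta')} \Big)^{\frac{1}{(1+\eps)(1+\delta')}} \le \frac{C}{|Q|} \int_{\R^n} (M1_Q)^p w
$$
follows directly from Theorem~\ref{thm:rhi_cp} applied to $w$. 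The remaining and genuinely substantive step is to pass from $\int_{\R^n}(M1_Q)^p w$ on the right to $\big(\int_{\R^n}(M1_Q)^p w^{1+\eps}\big)^{1/(1+\eps)}$ (up to a structural factor and a $|Q|^{1/(1+\eps)-1}$-type normalization). This is where I would work: one wants a reverse-Hölder-type inequality for the measure $(M1_Q)^p w\,dx$ against the measure $(M1_Q)^p w^{1+\eps}\,dx$, i.e. an estimate of the form
$$
  \frac{1}{|Q|}\int_{\R^n}(M1_Q)^p w \lesssim \Big( \frac{1}{|Q|}\int_{\R^n}(M1_Q)^p w^{1+\eps}\Big)^{\frac{1}{1+\eps}}.
$$
I expect this to follow by decomposing $\int_{\R^n}(M1_Q)^p w$ dyadically according to the level sets of $M1_Q$ (i.e. splitting $\R^n$ into the annuli $\{M1_Q \approx 2^{-k}\}$, each roughly a dilate $2^{k/n}Q$), applying the ordinary Reverse Hölder inequality of Theorem~\ref{thm:rhi_cp} on each such dilate $2^{k/n}Q$ to compare $\avgint_{2^{k/n}Q} w$ with $\big(\avgint_{2^{k/n}Q} w^{1+\eps}\big)^{1/(1+\eps)}$, and then resumming; the factor $(M1_Q)^p$ produces a geometric series in $k$ that converges because $p > 0$, which absorbs into the structural constant.

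\textbf{Main obstacle.} The main obstacle is the last step: handling the global integral $\int_{\R^n}(M1_Q)^p w$ rather than a local average. Unlike the $A_\infty$ situation, here $w$ need not be integrable over all of $\R^n$ and one cannot simply apply a single Reverse Hölder inequality on a fixed cube; the convergence of the annular decomposition must be controlled carefully, and one must keep track of the fact that the Reverse Hölder constant in Theorem~\ref{thm:rhi_cp} is uniform over all cubes (and in particular over the dilates $2^{k/n}Q$), which it is. Once that reverse inequality is in hand, combining it with the forward direction gives the full Reverse Hölder bound for $w^{1+\eps}$, and Theorem~\ref{thm:rhi_cp} (the converse direction) then yields $w^{1+\eps} \in C_p$, with $\eps_0$ depending only on $n$, $p$ and $[w]_{C_p}$ through $\delta$.
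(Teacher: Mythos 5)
Your overall strategy matches the paper's: both appeal to the Reverse H\"older characterization in Theorem~\ref{thm:rhi_cp} and split $1+\delta$ as a product $(1+\eps_0)(1+\delta')$ so that $\avgint_Q w^{(1+\eps_0)(1+\delta')}$ is controlled by the known Reverse H\"older inequality for $w$ (the paper's concrete choice is $\eps_0 = \delta/2$, $1+\delta' = 1+\tfrac{\delta}{2+\delta}$). You also correctly pinpoint the remaining task: passing from $\tfrac{1}{|Q|}\int(M1_Q)^p w$ to $\big(\tfrac{1}{|Q|}\int(M1_Q)^p w^{1+\eps}\big)^{1/(1+\eps)}$.

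What you misjudge is the nature of that remaining step. It is \emph{not} a reverse-H\"older-type inequality and requires no annular decomposition: it is forward H\"older applied to the measure $d\mu = (M1_Q)^p\,dx$, namely
$$
  \int_{\R^n} w\,d\mu \le \Big(\int_{\R^n} w^{1+\eps}\,d\mu\Big)^{\frac{1}{1+\eps}} \mu(\R^n)^{\frac{\eps}{1+\eps}},
$$
combined with $\mu(\R^n) = \int_{\R^n}(M1_Q)^p \le c_n p'\,|Q|$ from the $L^p$-boundedness of $M$. This is exactly what the paper does, and it makes the ``main obstacle'' you flag (non-integrability of $w$ on $\R^n$) a non-issue. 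Your dyadic-annuli plan would in fact collapse to the same thing: the local step $\avgint_{2^kQ}w \le \big(\avgint_{2^kQ}w^{1+\eps}\big)^{1/(1+\eps)}$, which you attribute to Theorem~\ref{thm:rhi_cp}, is just H\"older and does not use Reverse H\"older at all (the Reverse H\"older inequality points the other way); a second H\"older in $k$ then collects the geometric series. So the route works but is a detour with a mislabeled tool.

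Two smaller corrections. First, the target Reverse H\"older inequality for $w^{1+\eps}$ should carry the exponent $\tfrac{1}{1+\delta'}$ on the left, not $\tfrac{1}{(1+\eps)(1+\delta')}$: since $(w^{1+\eps})^{1+\delta'} = w^{(1+\eps)(1+\delta')}$, Theorem~\ref{thm:rhi_cp} for $w^{1+\eps}$ asks for $\big(\avgint_Q w^{(1+\eps)(1+\delta')}\big)^{1/(1+\delta')} \le \tfrac{C'}{|Q|}\int_{\R^n}(M1_Q)^p w^{1+\eps}$. Second, make explicit the passage from $\eps=\eps_0$ to all $0<\eps\le\eps_0$: either fix $\delta'$ and check $(1+\eps)(1+\delta')\le 1+\delta$ for all such $\eps$, or (as the paper does) conclude via $\langle w^{1+\eps}\rangle_Q^{1/(1+\eps)} \le \langle w^{1+\eps_0}\rangle_Q^{1/(1+\eps_0)}$.
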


\begin{proof}
  Let $\delta$ be the Reverse H\"older parameter from Theorem \ref{thm:rhi_cp} and set $\eps_0 = \frac{\delta}{2}$. Then, for $s = 1 + \frac{\delta}{2+\delta}$, we have $s(1+\eps_0) = 1+\delta$. Thus, we get
  \begin{align*}
    \Big( \avgint_Q w^{(1+\eps_0)s} \Big)^\frac{1}{s}
    &\le \Big( \frac{1}{|Q|} \int_{\R^n} (M1_Q)^p w \Big)^\frac{1+\delta}{s}
     =  \Big( \frac{1}{|Q|} \int_{\R^n} (M1_Q)^p w\Big)^{1+\eps_0}\\
    &\le \Big( \frac{1}{|Q|} \int_{\R^n} (M1_Q)^p\Big)^\frac{1+\eps_0}{1+\frac{1}{\eps_0}} \Big( \frac{1}{|Q|} \int_{\R^n} (M1_Q)^p w^{1+\eps_0}\Big) \\
    &\le( c_n \, p')^{\eps_0} \cdot \frac{1}{|Q|} \int_{\R^n} (M1_Q)^p w^{1+\eps_0},
  \end{align*}
  where we used first Theorem \ref{thm:rhi_cp}, then the standard H\"older's inequality and finally the $L^p$-boundedness of the Hardy--Littlewood maximal operator. Thus, the weight $w^{1+\eps_0}$ satisfies a Reverse H\"older inequality in the sense of Theorem \ref{thm:rhi_cp} and therefore $w^{1+\eps_0} \in C_p$.

  The fact that now also $w^{1+\eps} \in C_p$ for every $0 < \eps \le \eps_0$ follows easily from H\"older's inequality: for every cube $Q$ we have $\langle w^{1+\eps} \rangle_Q^{\frac{1}{1+\eps}} \le \langle w^{1+\eps_0} \rangle_Q^{\frac{1}{1+\eps_0}}$.
\end{proof}

In the light of Proposition \ref{prop:power_weight}, answering the following question would solve Problem \ref{problem:rhi}:
\begin{openproblem}
  \label{problem:rhi2}
  Suppose that $w \in C_p$ for some $1 < p < \infty$. Do there exist $\eps_0 > 0$ and $C \ge 1$ such that
\begin{equation}
    \label{question:rh}
   \Big( \frac{1}{|Q|} \int_{\R^n} (M1_Q)^p w^{1+\eps} \Big)^\frac{1}{1+\eps}
    \le C \frac{1}{|Q|} \int_{\R^n} (M1_Q)^p w
  \end{equation}
  for every cube $Q$ and every $0 < \eps \le \eps_0$?
\end{openproblem}
As a consequence of Proposition \ref{prop:power_weight} we get something slightly worse than \eqref{question:rh}:

\begin{corollary}
  \label{tail-self-improv}
  Suppose $w\in C_p$ for some $1 \le p < \infty$ and let $\delta_0$ be the Reverse H\"older exponent from Theorem \ref{thm:rhi_cp}. Then for every $0 < \delta \le \delta_0$ and every cube $Q$ we have
$$
  \Big( \frac{1}{|Q|} \int_{\R^n} (M1_Q)^p w^{1+\delta} \Big)^\frac{1}{1+\delta}
    \le C_{n,p,\delta} \frac{1}{|Q|} \int_{\R^n} (M1_Q)^{\frac{p+\delta}{1+\delta}} w,
$$
\end{corollary}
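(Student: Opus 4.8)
The plan is to reduce the tail integral on the left to the scale of $Q$ by a dyadic/annular decomposition and then to apply the Reverse Hölder inequality of Theorem~\ref{thm:rhi_cp}, but with the \emph{smaller} exponent $b:=\frac{p+\delta}{1+\delta}\in(1,p]$ in place of $p$. The first point is that $w\in C_p$ implies $w\in C_b$ with $[w]_{C_b}\le[w]_{C_p}$, because $M1_Q\le1$ forces $(M1_Q)^p\le(M1_Q)^b$, so the defining inequality \eqref{cond:c_p} with exponent $p$ gives the one with exponent $b$. Hence Theorem~\ref{thm:rhi_cp} applies to $w$ with exponent $b$ and (up to a dimensional multiple of) the same admissible $\delta\le\delta_0$, and we may use that for every cube $R$,
\begin{equation}\label{pf:rhb}
  \int_R w^{1+\delta}=|R|\avgint_R w^{1+\delta}\le C^{1+\delta}|R|^{-\delta}\Big(\int_{\R^n}(M1_R)^{b}w\Big)^{1+\delta}.
\end{equation}
By Remark~\ref{Remark-finiteness} we may assume $[w]_{C_b}>0$, so that $T:=\int_{\R^n}(M1_Q)^{b}w<\infty$; otherwise the right-hand side of the claimed inequality is infinite. (When $p=1$, $b=1$ and $C_1$ takes the role of $C_b$; the scheme below is the same.)

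Decomposing $\R^n$ into the dyadic annuli $2Q$ and $2^{k+1}Q\setminus2^kQ$ ($k\ge1$), on which $M1_Q\approx2^{-kn}$, gives
\begin{equation}\label{pf:annuli}
  \int_{\R^n}(M1_Q)^pw^{1+\delta}\lesssim_{n,p}\sum_{k\ge0}2^{-knp}\int_{2^{k+1}Q}w^{1+\delta}.
\end{equation}
I would tile each $2^{k+1}Q$ by the $2^{(k+1)n}$ lattice translates $Q'$ of $Q$ contained in it and apply \eqref{pf:rhb} to every such $Q'$, obtaining
$$\int_{2^{k+1}Q}w^{1+\delta}\le C^{1+\delta}|Q|^{-\delta}\sum_{Q'\subseteq 2^{k+1}Q}\Big(\int_{\R^n}(M1_{Q'})^{b}w\Big)^{1+\delta}.$$
Plugging this into \eqref{pf:annuli} and exchanging the order of summation — a translate $Q'$ lies in $2^{k+1}Q$ exactly when $2^k\gtrsim d(Q'):=1+\dist(Q',Q)/\ell(Q)$, and $\sum_{2^k\gtrsim d(Q')}2^{-knp}\approx d(Q')^{-np}$ since $p>1$ — reduces everything to
\begin{equation}\label{pf:key}
  \sum_{Q'}\Big(\int_{\R^n}(M1_{Q'})^{b}w\Big)^{1+\delta}d(Q')^{-np}\lesssim_{n,p,\delta}T^{1+\delta},
\end{equation}
the sum over all lattice translates $Q'$ of $Q$. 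Granting \eqref{pf:key} we get $\int_{\R^n}(M1_Q)^pw^{1+\delta}\lesssim|Q|^{-\delta}T^{1+\delta}$, which is the assertion after taking $(1+\delta)$-th roots and dividing by $|Q|^{1/(1+\delta)}$.

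The remaining, and main, point is \eqref{pf:key}. Here the exponents conspire nicely: if one has a growth bound $\int_{\R^n}(M1_{Q'})^{b}w\lesssim d(Q')^{n\gamma}\,T$, then, since there are $\approx d^{\,n-1}$ translates at dyadic distance $d$ and $b(1+\delta)=p+\delta$, the left side of \eqref{pf:key} is controlled by $T^{1+\delta}\sum_d d^{\,n-1+n\gamma(1+\delta)-np}$, which converges precisely when $\gamma<b-1=\frac{p-1}{1+\delta}$. The crude pointwise estimate $M1_{Q'}\lesssim d(Q')^{n}M1_Q$ only yields $\gamma=b$ and a divergent series, so the heart of the argument is to improve it: one must show that for a $C_b$ weight $w$ the tail mass $\int_{\R^n}(M1_{Q'})^{b}w$ grows at most like $d(Q')^{n\gamma}\,T$ with $\gamma$ strictly below the critical value $b-1$ (with only a logarithmically small prefactor in the borderline case). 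This sub-critical growth is exactly what the $C_b$ condition buys — a weight that is too heavy far from $Q$ violates \eqref{cond:c_p} for the large cubes $2^jQ$, and the Reverse Hölder self-improvement of Theorem~\ref{thm:rhi_cp} / Proposition~\ref{prop:power_weight} quantifies how much concentration is allowed. I expect this tail estimate to be the only real difficulty; the rest is summation of geometric series.
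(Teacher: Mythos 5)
Your reduction — annular decomposition of the tail, Reverse H\"older on each translate $Q'$ of $Q$, and then a re-summation over translates — is clean up to the point where you arrive at the estimate
\begin{equation*}
\sum_{Q'}\Big(\int_{\R^n}(M1_{Q'})^{b}w\Big)^{1+\delta}d(Q')^{-np}\lesssim_{n,p,\delta}\Big(\int_{\R^n}(M1_{Q})^{b}w\Big)^{1+\delta},\qquad b=\tfrac{p+\delta}{1+\delta},
\end{equation*}
which you correctly flag as the heart of the matter and leave unproved. This is a genuine gap, and I do not believe the estimate is obtainable from the hypotheses. You need sub-critical growth $\int_{\R^n}(M1_{Q'})^bw\lesssim d(Q')^{n\gamma}\int_{\R^n}(M1_Q)^bw$ with $\gamma<b-1$. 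Applying the $C_b$ condition \eqref{cond:c_p} to $R=2^jQ\supset Q'$ (with $2^j\approx d(Q')$) improves the crude bound $\gamma=b$ only to $\gamma=b-\eps$, where $\eps$ is the exponent in \eqref{cond:c_p}; to reach $\gamma<b-1$ you would need $\eps>1$, whereas $\eps$ is small (cf.\ \eqref{quantity:eps_cp}). Discretizing both sides via \cite[Lemma 3.2]{canto} and writing $S_j:=\sum_{m\ge j}2^{-n(b-1)m}\avgint_{2^mQ}w$ shows that the left-hand side of your target estimate is controlled by $\sum_j 2^{jn(1+\delta)(1-\eps)}S_j^{1+\delta}$; when $\eps<1$ this can be made arbitrarily large relative to $S_0^{1+\delta}$ by choosing $w$ so that $S_j$ decays slowly, so the claim fails uniformly over $C_p$ weights. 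This last point matters because the constant $C_{n,p,\delta}$ in the corollary is not allowed to depend on $[w]_{C_p}$, so you cannot trade worse dependence on the $C_p$ characteristic for a smaller $\gamma$.

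The paper's proof sidesteps this obstruction by never applying the Reverse H\"older inequality to \emph{translates}. It discretizes $\frac{1}{|Q|}\int(M1_Q)^pw^{1+\delta}$ over the \emph{concentric dilates} $2^kQ$ using \cite[Lemma 3.2]{canto}, applies Theorem \ref{thm:rhi_cp} (with exponent $p$, not $b$) to each $\avgint_{2^kQ}w^{1+\delta}$, and then re-discretizes the resulting factors $\frac{1}{|2^kQ|}\int(M1_{2^kQ})^pw$. The crucial structural advantage of dilates over translates is that each of these re-discretized factors is a reindexed tail of the \emph{same} series $\sum_m 2^{-n(p-1)m}\avgint_{2^mQ}w$, not an independent quantity; the outer $(1+\delta)$-power is then absorbed with Lemma \ref{Embedding} (the $\ell^{1/(1+\delta)}\hookrightarrow\ell^1$ embedding), and summing the inner geometric series produces the decay rate $2^{-n(p-1)m/(1+\delta)}$, i.e.\ the exponent $\frac{p+\delta}{1+\delta}$ on $M1_Q$. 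In short: the exponent improvement comes from the arithmetic of the nested geometric sums, not from a gain in the $C_p$ condition itself, which is exactly what your lattice-translate decomposition loses.
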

Since the proof of Corollary \ref{tail-self-improv} is a fairly technical computation, we formulate explicitly the following well-known embedding property of $\ell^p$ spaces:

\begin{lemma}
  \label{Embedding}
  Let $0 < \alpha < \beta < \infty$. Then, for positive numbers $a_n$, $n \in \N$, we have
$$
   \Big( \sum_n a_n^\beta \Big)^\frac{1}{\beta}
    \le \Big( \sum_n a_n^\alpha \Big)^\frac{1}{\alpha}.
$$
\end{lemma}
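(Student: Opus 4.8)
The plan is to reduce to a normalized situation and then use the elementary pointwise inequality $t^{\beta}\le t^{\alpha}$ valid for $t\in[0,1]$ when $\beta>\alpha$. First I would set $S\coloneqq\sum_{n}a_{n}^{\alpha}$. If $S=\infty$ the right-hand side of the asserted inequality is infinite and there is nothing to prove, so I may assume $S<\infty$; in particular the right-hand side is finite. Next I observe that the inequality is unaffected by the rescaling $a_{n}\mapsto a_{n}/S^{1/\alpha}$: replacing each $a_{n}$ by $\lambda a_{n}$ with $\lambda>0$ multiplies $\bigl(\sum_{n}a_{n}^{\beta}\bigr)^{1/\beta}$ and $\bigl(\sum_{n}a_{n}^{\alpha}\bigr)^{1/\alpha}$ by the same factor $\lambda$. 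Hence, without loss of generality, I assume $\sum_{n}a_{n}^{\alpha}=1$.

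Under this normalization every term satisfies $a_{n}^{\alpha}\le 1$, so $0\le a_{n}\le 1$ for all $n$. Since $\beta-\alpha>0$, this forces $a_{n}^{\beta-\alpha}\le 1$, and multiplying by $a_{n}^{\alpha}\ge 0$ yields the pointwise bound $a_{n}^{\beta}\le a_{n}^{\alpha}$ for every $n$. Summing over $n$ gives $\sum_{n}a_{n}^{\beta}\le\sum_{n}a_{n}^{\alpha}=1$, and raising to the power $1/\beta$ produces
\[
\Bigl(\sum_{n}a_{n}^{\beta}\Bigr)^{1/\beta}\le 1=\Bigl(\sum_{n}a_{n}^{\alpha}\Bigr)^{1/\alpha},
\]
which is exactly the claimed estimate. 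Undoing the rescaling recovers the general case.

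There is essentially no serious obstacle here; the statement is the classical $\ell^{\alpha}\hookrightarrow\ell^{\beta}$ embedding. The only point deserving a line of care is the homogeneity step, where one verifies that the normalization $S=1$ is legitimate, together with the trivial remark that $S>0$ since the $a_{n}$ are positive, so the division by $S^{1/\alpha}$ is harmless.
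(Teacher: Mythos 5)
Your proof is correct; it is the standard argument for the $\ell^{\alpha}\hookrightarrow\ell^{\beta}$ embedding (normalize $\sum_n a_n^{\alpha}=1$, use $a_n\le1\Rightarrow a_n^{\beta}\le a_n^{\alpha}$, sum, and undo the homogeneous rescaling). The paper states the lemma as a well-known fact and gives no proof, so there is no authorial argument to compare against.
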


\begin{proof}[Proof of Corollary \ref{tail-self-improv}]
  We argue by discretizing the tail. By \cite[Lemma 3.2]{canto}, we have
$$
    \frac{1}{|Q|}\int_{\R^n} (M1_Q)^p w \approx_{n,p} \sum_{k=0}^\infty 2^{-n(p-1)k} \avgint_{2^kQ} w,
$$
  for $1 \le p < \infty$ and any weight $w$. The implicit constants do not blow up when $p$ tends to $1$, but they do blow up when $p \rightarrow \infty$. We get
  \begin{align*}
    \frac{1}{|Q|} \int_{\R^n}(M1_Q)^p w^{1+\delta}
    &\overset{\text{(A)}}{\approx}_{n,p} \sum_{k=0}^\infty 2^{-n(p-1)k} \avgint_{2^kQ} w^{1+\delta} \\
    &\overset{\text{(B)}}{\lesssim} \sum_{k=0}^\infty 2^{-n(p-1)k}\Big(\frac{1}{|2^{k}Q|} \int_{\R^n}(M1_{2^kQ})^p w\Big)^{1+\delta} \\
    &\overset{\text{(A)}}{\lesssim}_{n,p,\delta} \sum_{k=0}^\infty 2^{-n(p-1)k} \Big( \sum_{j=0}^\infty 2^{-n(p-1)j}  \avgint_{2^{j+k}Q} w\Big)^{1+\delta}\\
    &\overset{\text{(C)}}{\le}\Big( \sum_{k,j=0}^\infty 2^{-n(p-1) \frac{k}{1+\delta}} 2^{-n(p-1)j}  \avgint_{2^{j+k}Q} w\Big)^{1+\delta} \\
    &= \Big( \sum_{m=0}^\infty \Big( \sum_{i=0}^m 2^{-n(p-1)\big( \frac{i}{1+\delta}+(m-i)\big)} \Big) \avgint_{2^mQ}w\Big)^{1+\delta} \\
    &\overset{\text{(D)}}{\lesssim} \Big( \sum_{m=0}^\infty  2^{-n(p-1) \frac{m}{1+\delta}} \avgint_{2^mQ}w\Big)^{1+\delta} \\
    &=\Big( \sum_{m=0}^\infty  2^{-n\Big( \frac{p+\delta}{1+\delta}-1 \Big) m} \avgint_{2^mQ}w\Big)^{1+\delta} \\
    &\overset{\text{(A)}}{\approx}_{n,p,\delta}\Big( \frac{1}{|Q|} \int_{\R^n} (M1_Q)^{\frac{p+\delta}{1+\delta}} w\Big)^{1+\delta},
  \end{align*}
  where we (A) used the discretization, (B) used the Reverse H\"older inequality, (C) applied Lemma~\ref{Embedding} with $\alpha = \frac{1}{1 + \delta}$ and $\beta = 1$, and (D) calculated the geometric sum and made obvious estimates.
\end{proof}

\section{On weak $C_p$ and dyadic $C_p$}
\label{section:weak_cp}

When we compare the characterizations of $A_\infty$ \eqref{defin:a_infty} and $C_p$ \eqref{cond:c_p}, it is obvious that $A_\infty \subset C_p$ for every $p$. However, $A_\infty$ weights are not good representatives of $C_p$ weights because the $C_p$ classes are much bigger than the $A_\infty$ class. For example, $A_\infty$ weights are always doubling and they cannot vanish in a set of positive measure whereas $C_p$ weights can grow arbitrarily fast and their supports can contain holes of infinite measure. Thus, the structure of a general $C_p$ weight can be very messy.

In this section, we consider some examples and properties related to $C_p$ weights. We also introduce weak and dyadic $C_p$ weights as an analogy to weak and dyadic $A_\infty$ weights. Although these new classes of weights seem like they are larger than $C_p$, this is not the case: weak and dyadic $C_p$ weights are just $C_p$ weights.

We start by proving an elementary lemma for the Hardy--Littlewood maximal operator that we already used in the previous sections:

\begin{lemma}
  \label{lemma:hl_bound}
  Let $Q_0 \subset \R^n$ be a cube and $E_0 \subset Q_0$ a measurable subset such that $|E_0| \ge \eta |Q_0|$
  for some $0 < \eta \le 1$. Then there exists a structural constant $C_n$ such that
$$
    M(1_{Q_0})(x) \le \frac{C_n}{\eta} M(1_{E_0})(x)
$$
  for almost every $x \in \R^n$.
\end{lemma}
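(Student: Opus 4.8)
The plan is to establish the pointwise inequality directly from the definition of $M$, in fact for \emph{every} $x \in \R^n$ (which is stronger than the stated almost-everywhere claim). Fix $x$ and an arbitrary cube $Q \ni x$. It suffices to show that $\langle 1_{Q_0} \rangle_Q \le \frac{C_n}{\eta}\, M(1_{E_0})(x)$, since taking the supremum over all cubes $Q$ containing $x$ then gives $M(1_{Q_0})(x) \le \frac{C_n}{\eta}\, M(1_{E_0})(x)$. We may clearly assume $Q \cap Q_0 \neq \emptyset$, as otherwise $\langle 1_{Q_0}\rangle_Q = 0$ and there is nothing to prove.

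The key step is to produce an auxiliary cube $Q'$ containing both $x$ and $Q_0$ whose side length is controlled by $\ell(Q) + \ell(Q_0)$. Since $Q$ and $Q_0$ are axis-parallel cubes that intersect, their coordinate projections overlap, so the bounding box of $Q \cup Q_0$ has every side length at most $\ell(Q) + \ell(Q_0)$; hence there is a cube $Q' \supseteq Q \cup Q_0$ with $\ell(Q') = \ell(Q) + \ell(Q_0)$, and in particular $x \in Q'$ and $Q_0 \subseteq Q'$. Using $E_0 \subseteq Q_0 \subseteq Q'$ and $|E_0| \ge \eta |Q_0|$ we then get
\[
  M(1_{E_0})(x) \ge \frac{|Q' \cap E_0|}{|Q'|} = \frac{|E_0|}{(\ell(Q)+\ell(Q_0))^n} \ge \frac{\eta\,\ell(Q_0)^n}{(\ell(Q)+\ell(Q_0))^n}.
\]

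To finish, one compares this lower bound with $\langle 1_{Q_0}\rangle_Q = \frac{|Q \cap Q_0|}{|Q|} \le \min\{1,(\ell(Q_0)/\ell(Q))^n\} = \ell(Q_0)^n/\max\{\ell(Q),\ell(Q_0)\}^n$. Since $\ell(Q)+\ell(Q_0) \le 2\max\{\ell(Q),\ell(Q_0)\}$, the displayed bound gives $M(1_{E_0})(x) \ge 2^{-n}\eta\,\ell(Q_0)^n/\max\{\ell(Q),\ell(Q_0)\}^n \ge 2^{-n}\eta\,\langle 1_{Q_0}\rangle_Q$, so the lemma holds with $C_n = 2^n$. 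This argument is entirely elementary and I do not anticipate any genuine obstacle; the only point requiring a little care is the construction of $Q'$ and keeping track of which of $\ell(Q)$, $\ell(Q_0)$ is larger, both handled cleanly by the estimate $\ell(Q')=\ell(Q)+\ell(Q_0)\le 2\max\{\ell(Q),\ell(Q_0)\}$.
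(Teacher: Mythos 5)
Your proof is correct, and it gives the pointwise bound for every $x$ with the explicit constant $C_n = 2^n$. The paper's own proof proceeds by a dichotomy on $\mathrm{dist}(x,Q_0)$ versus $\ell(Q_0)$: in the ``near'' case it compares against a fixed auxiliary cube $Q(x, 2c_n\ell(Q_0))$ of side length comparable to $\ell(Q_0)$ and uses the trivial bound $M(1_{Q_0})(x)\le 1$, while in the ``far'' case it works with the centered maximal function and the estimate $|Q_0\cap Q(x,r)|\le |Q_0|$ for $r>\mathrm{dist}(x,Q_0)$. Your argument replaces this case split with the single observation that any cube $Q\ni x$ meeting $Q_0$ sits, together with $Q_0$, inside a cube $Q'$ of side length $\ell(Q)+\ell(Q_0)$; this unifies the two cases of the paper (the ``near'' regime corresponds to $\ell(Q)\lesssim\ell(Q_0)$, the ``far'' regime to $\ell(Q)\gtrsim\ell(Q_0)$), avoids passing through the equivalence of centered and uncentered maximal functions, and pins down the structural constant. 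The underlying idea — produce a cube through $x$ that contains $Q_0$ and has controlled measure — is the same in both proofs, but your packaging is cleaner and slightly stronger.
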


\begin{proof}
  Let $Q(x,r)$ be the cube with center point $x$ and side length $r$. There exists a structural constant $c_n \ge 1$ such that
$$
    E_0 \subset Q_0 \subset Q(x, c_n(\dist(x,Q_0) + \ell(Q_0))).
$$
  The proof now consists of two cases:
  \begin{enumerate}
    \item[1)] Suppose that $\dist(x,Q_0) \le \ell(Q_0)$. Then $Q_0 \subset Q(x,2c_n\ell(Q_0)) \eqqcolon Q_x$ and $|Q_0| \approx |Q_x|$. Thus,
$$
      M(1_{E_0})(x) \ge \frac{|E_0 \cap Q_x|}{|Q_x|} \approx \frac{|E_0|}{|Q_0|} \ge \frac{1}{\eta} \ge \frac{1}{\eta} M(1_{Q_0})(x).
$$

    \item[2)] Suppose that $\dist(x,Q_0) > \ell(Q_0)$. Then
    \begin{align*}
      &M(1_{Q_0})(x) = \sup_{r > \dist(x,Q_0)} \frac{|Q_0 \cap Q(x,r)|}{|Q(x,r)|}
                    \le \sup_{r > \dist(x,Q_0)} \frac{ c_n'|Q_0|}{|Q(x,2c_n r)|} \\
                    &\le \sup_{r > \dist(x,Q_0)} \frac{c_n'}{\eta} \frac{|E_0|}{|Q(x,2c_n r)|}
                    = \sup_{r > \dist(x,Q_0)} \frac{c_n'}{\eta} \frac{|E_0 \cap Q(x,2c_n r)|}{|Q(x,2c_n r)|}
                    \le \frac{c_n'}{\eta} M(1_{E_0})(x).  \qedhere
    \end{align*}
  \end{enumerate}
\end{proof}

\subsection{Weak $A_\infty$ weights}

Let us recall the definition of the weak $A_\infty$ classes. The Fujii--Wilson type characterization of these weights was studied in detail in \cite{andersonhytonentapiola} but earlier they have appeared in other forms in the study of e.g. weighted norm inequalities \cite{sawyer_two_weight} and elliptic partial differential equations and quantitative rectifiability; see e.g. \cite{hofmannlemartellnystrom} and references therein.

\begin{defin}
  Suppose that $\gamma \ge 1$. We say that a weight $w$ belongs to the \emph{$\gamma$-weak $A_\infty$ class $A_\infty^\gamma$} if there
  exist positive constants $C, \delta > 0$ such that
  \begin{equation}
    \label{defin:weak_a_infty}
    w(E) \le C \left( \frac{|E|}{|Q|} \right)^\delta w(\gamma Q)
  \end{equation}
  for any cube $Q$ and any measurable subset $E \subset Q$, where $\gamma Q$ is the cube of side length $\gamma \ell(Q)$ with the same center point as $Q$.
\end{defin}

We denote $A_\infty^\text{weak} \coloneqq \bigcup_{\gamma \ge 1} A_\infty^\gamma$. It was shown in \cite{andersonhytonentapiola} that this definition does not give us a continuum of different weak $A_\infty$ classes but the dilation parameter $\gamma$ is irrelevant for the structure of the class as long as $\gamma > 1$:

\begin{theorem}[{\cite{andersonhytonentapiola}}]
  We have
  \begin{enumerate}
    \item[i)] $A_\infty \subsetneq A_\infty^\gamma$ for every $\gamma > 1$;
    \item[ii)] $A_\infty^\gamma = A_\infty^\text{weak}$ for every $\gamma > 1$;
    \item[iii)] $w \in A_\infty^\text{weak}$ if and only if for every $\lambda > 1$
                there exists a constant $[w]_{A_\infty^\lambda}$ such that
             $$
                  \int_Q M(1_Q w) \le [w]_{A_\infty^\lambda} w(\lambda Q)
              $$
                for every cube $Q$.
  \end{enumerate}
\end{theorem}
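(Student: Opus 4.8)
The plan is to route the whole statement through the Fujii--Wilson-type testing condition of iii) and to isolate one quantitative lemma that carries all the weight; throughout, ``$w$ satisfies the testing bound at scale $\mu$'' abbreviates $\int_Q M(1_Q w)\le K_\mu\,w(\mu Q)$ for every cube $Q$. For \textbf{part i)}, the inclusion $A_\infty\subseteq A_\infty^\gamma$ is immediate from $Q\subseteq\gamma Q$ and $w\ge 0$: \eqref{defin:a_infty} gives $w(E)\le C(|E|/|Q|)^\delta w(Q)\le C(|E|/|Q|)^\delta w(\gamma Q)$. For strictness I would take the half-space weight $w=1_H$, $H=\{x\in\R^n:x_1>0\}$. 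If $Q\cap H\neq\emptyset$ then the first coordinate of the center of $Q$ exceeds $-\tfrac12\ell(Q)$, so $\gamma Q$ protrudes into $H$ by at least $\tfrac{\gamma-1}{2}\ell(Q)$ in the $e_1$-direction, whence $w(\gamma Q)=|\gamma Q\cap H|\ge\tfrac{\gamma-1}{2}|Q|$ and \eqref{defin:weak_a_infty} holds with $\delta=1$, $C=\tfrac{2}{\gamma-1}$ (if $Q\cap H=\emptyset$ the left side vanishes); thus $w\in A_\infty^\gamma$ for every $\gamma>1$. On the other hand, choosing $Q$ with $|Q\cap H|=\eps|Q|$ and $E=Q\cap H$ gives $w(E)/w(Q)=1$ while $|E|/|Q|=\eps\to0$, so \eqref{defin:a_infty} fails and $w\notin A_\infty$.

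The two ``local'' halves of iii) I would obtain by Calder\'on--Zygmund arguments. For the direction ``testing bound at some $\mu_0>1$ $\Rightarrow$ power decay'': given $E\subseteq Q$, run the Calder\'on--Zygmund decomposition of $1_E$ relative to $Q$ at height $|E|/|Q|$, producing disjoint cubes $Q_j\subseteq Q$ with $|E\cap Q_j|/|Q_j|\approx|E|/|Q|$ and $E\subseteq\bigcup_j Q_j$ up to a null set; since $M(1_Qw)\ge\langle w\rangle_{Q_j}$ on $Q_j$, summation gives $w(E)\le\sum_j w(Q_j)\le\sum_j\int_{Q_j}M(1_Qw)\le\int_Q M(1_Qw)\le K_{\mu_0}w(\mu_0 Q)$, and a standard iteration of this estimate down the Calder\'on--Zygmund children converts $w(E)\lesssim w(\mu_0 Q)$ into $w(E)\lesssim(|E|/|Q|)^{\delta}w(\mu_0 Q)$. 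For the converse, ``$w\in A_\infty^\gamma$ $\Rightarrow$ testing bound at any $\mu_0>1+\sqrt n\,\gamma$'': write $\int_Q M(1_Qw)=\int_0^\infty|\{x\in Q:M(1_Qw)(x)>t\}|\,dt$, bound the level set trivially by $|Q|$ for $t$ below a large multiple of $\langle w\rangle_Q$, and for larger $t$ note that the cubes realizing $M(1_Qw)(x)>t$ are smaller than $Q$, so a bounded dilate of each stays inside $\mu_0 Q$; a Vitali/good-$\lambda$ argument fed by \eqref{defin:weak_a_infty} then yields geometric decay of the $w$-mass of the level sets and, upon integrating in $t$, the testing bound.

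The heart of the statement, and the step I expect to be the main obstacle, is the \emph{dilation self-improvement} of the testing bound: \emph{if $w$ satisfies the testing bound at some $\mu_0>1$, it satisfies it at every $\mu>1$.} I would prove this by a finite chaining argument: fixing $\mu\in(1,\mu_0)$, subdivide $Q$ into $N=N(n,\mu_0,\mu)$ congruent subcubes $Q_i$ small enough that a $\mu_0$-dilate of each stays inside $\mu Q$; for $x\in Q_i$ split $M(1_Qw)(x)$ into the supremum over cubes $R\ni x$ with $\ell(R)$ small relative to $\ell(Q_i)$ -- then $R$ lies in that dilate of $Q_i$, so this part is dominated by $M\big(1_{\mu_0 Q_i}(1_Qw)\big)(x)$, whose integral over $Q_i$ is controlled, via the testing bound applied to a dilate of $Q_i$, by $w(\mu Q)$ -- and the supremum over larger $R$, for which $\tfrac1{|R|}\int_{R\cap Q}w\le\tfrac{w(Q)}{|Q_i|}\lesssim_N\langle w\rangle_{\mu Q}$, so this part integrates over $Q_i$ to $\lesssim_N w(\mu Q)$; summing over the $N$ subcubes finishes the lemma. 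Everything then assembles: for $w\in A_\infty^{\gamma'}$, the converse half of iii) gives the testing bound at some $\mu_0$, the lemma gives it at every $\mu>1$, and the first half of iii) gives $w\in A_\infty^{\mu}$ for every $\mu>1$ -- which is simultaneously iii) (its ``for every $\lambda$'' being precisely the lemma) and, together with the trivial monotonicity $A_\infty^{\mu}\subseteq A_\infty^{\mu'}$ for $\mu\le\mu'$, claim ii). The reason the lemma is the genuine difficulty is purely quantitative: the number $N$ of subcubes and the various covering constants in the Calder\'on--Zygmund and Vitali steps blow up as $\mu\downarrow1$, so the real work is to verify that the resulting constant $[w]_{A_\infty^{\mu}}$, while depending on $\mu$, stays finite for each fixed $\mu>1$.
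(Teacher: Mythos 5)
The paper does not prove this theorem at all---it is cited verbatim from Anderson--Hyt\"onen--Tapiola---so you are being judged against a correct proof in general, not against a particular route in the paper. Your part i) is fine: the half-space weight $1_{\{x_1>0\}}$ lies in $A_\infty^\gamma$ for every $\gamma>1$ but not in $A_\infty$, by exactly the computation you give. Your plan of routing ii) and iii) through a dilation self-improvement of the testing bound is sensible, and the sketch of that lemma (subdivide $Q$ into $N$ pieces, split $M(1_Q w)$ on each piece into a small-scale part controlled by the testing bound on a dilate of the piece and a large-scale part controlled by the trivial $L^\infty$ bound) is essentially right once you replace $\mu_0 Q_i\subseteq\mu Q$ by $\mu_0^2 Q_i\subseteq\mu Q$, since the testing bound applied to the cube $\mu_0 Q_i$ produces $w(\mu_0^2 Q_i)$ on the right, not $w(\mu_0 Q_i)$.

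The genuine gap is in the direction ``testing at $\mu_0$ $\Rightarrow$ power decay.'' Read your chain again: $w(E)\le\sum_j w(Q_j)\le\sum_j\int_{Q_j}M(1_Q w)\le\int_Q M(1_Q w)\le K_{\mu_0}w(\mu_0 Q)$. Every step is correct, but the conclusion $w(E)\le K_{\mu_0}w(\mu_0 Q)$ is trivially true without any hypothesis, simply because $E\subseteq Q\subseteq\mu_0 Q$; the Calder\'on--Zygmund cubes and the testing bound play no role, and the smallness of $|E|/|Q|$ is never used. ``Standard iteration'' cannot repair this: iterating a vacuous estimate produces a vacuous estimate. Extracting a decay $w(E)\lesssim(|E|/|Q|)^\delta w(\mu_0 Q)$ from a Fujii--Wilson-type hypothesis is precisely the (weak) reverse H\"older self-improvement of Fujii and Hyt\"onen--P\'erez, which in this setting is the main theorem of Anderson--Hyt\"onen--Tapiola; it requires a genuine stopping-time or good-$\lambda$ argument showing that the super-level sets of $M(1_Q w)$ lose a fixed proportion of their $w$-mass at each dyadic generation, with the weak-$A_\infty$ hypothesis invoked at every generation, not once. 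You should also be aware that the converse local half (``$A_\infty^\gamma\Rightarrow$ testing'') has an overlap problem: the stopping cubes $Q_j^t$ tiling $\{M(1_Q w)>t\}$ are disjoint, but their $\gamma$-dilates need not have bounded overlap, so $\sum_j w(\gamma Q_j^t)$ is not a priori comparable to $w(\mu_0 Q)$; a Vitali selection controls the measure of the union, not the sum of the masses. As written, both local halves of iii)---which carry the whole argument---are missing their real content.
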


\subsection{Examples and some properties of $C_p$ weights}

Let us then gather some known results from the literature and consider
some other examples and properties of $C_p$ weights.
\begin{enumerate}
  \item[i)] From $A_p$ theory, \eqref{defin:weak_a_infty}, Lemma \ref{lemma:hl_bound},
            \cite{andersonhytonentapiola} and Theorem \ref{thm:kahanpaamejlbro}, it follows that for $1 < p < q < \infty$ we have
         $$
              A_1 \subsetneq A_p \subsetneq A_q \subsetneq A_\infty \subsetneq A_\infty^\text{weak} \subsetneq C_q \subsetneq C_p \subsetneq C_1.
         $$

  \item[ii)] If follows easily from the argument in
             \cite[Example 3.2]{andersonhytonentapiola} that $A_\infty^\text{weak}$
             contains all non-negative functions that are monotonic in each variable.
             By i), all these functions are also contained in $C_p$ for every $p$. In particular, $C_p$ weights are generally non-doubling.

  \item[iii)] If $w \in C_p$ is a doubling weight such that $w(\widetilde{Q}) \le 2^p w(Q)$,
              where $\widetilde{Q}$ is the concentric dilation of $Q$ with $\ell(\widetilde{Q}) = 2 \ell(Q)$, then $w \in A_\infty$ \cite[Section 7]{buckley}.

  \item[iv)] If $w \in A_\infty$, then $w 1_{[0,\infty)} \in C_p$ for every
              $1 \le p < \infty$ \cite{muckenhoupt_norm_inequalities}.

  \item[v)] More generally, if $w \in A_\infty$ and $g$ is a \emph{convexely contoured}
             weight (i.e. a weight such that $\{x \in \R^n \colon g(x) < \alpha\}$ is a convex set for every $\alpha \ge 0$), then $wg \in C_p$ for every $1 \le p < \infty$ \cite[Proposition 7.3]{buckley}.

  \item[vi)] If $w$ is a compactly supported weight, then $w \notin C_p$ for any $p$.
            It is straightforward to prove this. Let us denote $P \coloneqq \text{supp} \, w$. For every $k \in \N$, let $P_k$ be a cube such that $P \subset P_k$ and $|P_k| \ge 2^k |P|$. Now, for $E = P$, we have
$$
              \int_{\R^n} (M1_{P_k})^p w = \int_P (M1_{P_k})^p w = \int_P w = w(P) \in (0,\infty)
$$
            for every $k$ since $w$ is locally integrable. However,
$$
             \Big( \frac{|E|}{|P_k|} \Big)^\eps \le\Big( \frac{|P|}{2^k|P|} \Big)^\eps \searrow 0 \quad \text{ as } k \nearrow \infty
$$
            for every $\eps > 0$. Thus, there do not exist constants $C$ and $\eps$ such that \eqref{cond:c_p} holds for every cube $Q$. This argument also proves that if $w\in C_p$, then $w \notin L^1(\R^n)$.

  \item[vii)] Even though $C_p$ weights cannot have compact support, their support can have arbitrarily small measure. Indeed, suppose that $w \in A_\infty$ and $P = \bigcup_{k=1}^\infty [10^k, 10^k + \tfrac{1}{2^k}]$. Then $|P| = 1$ but $P$ is unbounded. We set $v(x) \coloneqq w(x) 1_P(x)$.
             \begin{enumerate}
               \item[$\bu$] If $w(x) = x^4$, then $\int_{\R} M(1_Q)^2 v = \infty$ for every cube $Q$ and thus, $v \in C_2$.

               \item[$\bu$] If $w(x) = 1$, then $w$ is integrable and, by vi), $w \not \in C_p$ for any $p$.
             \end{enumerate}
  \item[viii)] Suppose that $w$ is a weight such that $w(x) \ge \alpha > 0$ for every
              $x \in \R^n \setminus A$, where $A$ is a bounded set. Since $M(1_Q) \notin L^1(dx)$ for any cube $Q$, we have
$$
                \int_{\R^n} M(1_Q) w \ge \alpha \int_{\R^n \setminus A} M(1_Q) = \infty
$$
              and thus, $w \in C_1$.
\end{enumerate}

\subsection{Weak $C_p$ and dyadic $C_p$}

Let us then consider two generalizations of the $C_p$ class. Suppose that $\gamma \ge 1$. We write
\begin{enumerate}
  \item[i)] $w \in C_p^\Ds$ if we $w$ satisfies condition \eqref{cond:c_p} for
            all $Q \in \Ds$ instead of all cubes;
  \item[ii)] $w \in C_p^\gamma$, if $w$ satisfies condition \eqref{cond:c_p}  for
             $1_{\gamma Q}$ instead of $1_Q$, and all cubes $Q$;
  \item[iii)] $w \in C_p^\text{weak}$ if $w \in \bigcup_{\alpha \ge 1} C_p^\alpha$.
\end{enumerate}
We also define $A_\infty^\Ds$ similarly as $C_p^\Ds$.

Usually, these types of generalizations genuinely weaken the objects in question. For example, in the case of $A_\infty$, we already saw that $A_\infty$ is a proper subset of $A_\infty^\text{weak}$, and since $1_{[0,\infty)} \in A_\infty^\Ds$, we also have $A_\infty \subsetneq A_\infty^\Ds$. However, because of the non-local nature of the $C_p$ condition, these generalizations for $C_p$ classes just end up giving us back $C_p$:

\begin{proposition}
  \label{proposition:c_p-structure}
  We have $C_p = C_p^\Ds = C_p^\gamma = C_p^\text{weak}$ for every $\gamma \ge 1$.
\end{proposition}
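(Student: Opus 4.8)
The plan is to prove the two chains of inclusions $C_p \subseteq C_p^\Ds \subseteq C_p$ and $C_p \subseteq C_p^\gamma \subseteq C_p$ for each $\gamma \ge 1$; the equality $C_p^{\mathrm{weak}} = C_p$ is then immediate since $C_p^{\mathrm{weak}} = \bigcup_{\alpha \ge 1} C_p^\alpha$. The inclusions $C_p \subseteq C_p^\Ds$ and $C_p \subseteq C_p^\gamma$ are trivial: the first just restricts the family of testing cubes, and for the second $Q \subseteq \gamma Q$ gives $M1_Q \le M1_{\gamma Q}$ pointwise, so the right-hand side of \eqref{cond:c_p} only grows when $1_Q$ is replaced by $1_{\gamma Q}$. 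The real point, shared by both reverse inclusions, is that the nonlocal quantity $\int_{\R^n}(M1_{Q})^p w$ is stable under replacing $Q$ by a concentric dilate $\lambda Q$: one has $M1_Q \le M1_{\lambda Q}$, while Lemma \ref{lemma:hl_bound} applied with $Q_0 = \lambda Q$ and $E_0 = Q$ (so $\eta = \lambda^{-n}$) yields $M1_{\lambda Q} \le C_n \lambda^n M1_Q$ almost everywhere; hence $\int_{\R^n}(M1_{\lambda Q})^p w \approx_{n,p,\lambda} \int_{\R^n}(M1_Q)^p w$.

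For $C_p^\Ds \subseteq C_p$, fix an arbitrary cube $Q$ and a measurable $E \subseteq Q$, and choose $k \in \Z$ with $2^{-k} \ge \ell(Q) > 2^{-k-1}$. Since the generation-$k$ dyadic cubes partition $\R^n$ and each edge of $Q$ has length at most $2^{-k}$, the cube $Q$ meets at most $2^n$ of them, say $D_1,\dots,D_m$ with $m \le 2^n$; then $Q \subseteq \bigcup_j D_j$, $|Q| \le |D_j| < 2^n|Q|$, and an elementary $\ell^\infty$-diameter estimate shows $D_j \subseteq 5Q$ for each $j$. Writing $E_j := E \cap D_j$, applying $w \in C_p^\Ds$ to each $D_j$, and using $|E_j|/|D_j| \le |E|/|Q|$ together with $\int_{\R^n}(M1_{D_j})^p w \le \int_{\R^n}(M1_{5Q})^p w \lesssim_{n,p} \int_{\R^n}(M1_Q)^p w$ from the stability remark, we obtain after summing over $j \le 2^n$
\[
w(E) \le \sum_{j=1}^m w(E_j) \lesssim_{n,p} \Big(\frac{|E|}{|Q|}\Big)^\eps \int_{\R^n}(M1_Q)^p w,
\]
with the \emph{same} exponent $\eps$ as in the $C_p^\Ds$ hypothesis. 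This is exactly \eqref{cond:c_p}, so $w \in C_p$.

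For $C_p^\gamma \subseteq C_p$, set $N := \lceil \gamma \rceil$ and partition an arbitrary cube $Q$ into $N^n$ congruent subcubes $Q_i$ of side $\ell(Q)/N$. Each $\gamma Q_i$ has side at most $\ell(Q)$ and is concentric with $Q_i \subseteq Q$, so $\gamma Q_i \subseteq 2Q$. With $E_i := E \cap Q_i$ one has $|E_i|/|Q_i| = N^n|E_i|/|Q| \le N^n|E|/|Q|$ and $\int_{\R^n}(M1_{\gamma Q_i})^p w \le \int_{\R^n}(M1_{2Q})^p w \lesssim_{n,p} \int_{\R^n}(M1_Q)^p w$; applying $w \in C_p^\gamma$ to each $Q_i$ and summing over the $N^n$ pieces gives the $C_p$ condition for $Q$, again with the same $\eps$, the factors $N^n$ and $N^{n\eps}$ being absorbed into the constant. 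Hence $C_p^\gamma = C_p$ for every $\gamma \ge 1$, and taking the union over $\gamma$ gives $C_p^{\mathrm{weak}} = C_p$. I do not anticipate a genuine obstacle here; the only thing to watch is the bookkeeping of the finitely many auxiliary cubes and checking that none of the structural factors ($2^n$, the passage to $5Q$ or $2Q$, $N^n$, $N^{n\eps}$) degrades the exponent $\eps$ — they only enlarge the multiplicative constant, which is harmless for membership in $C_p$.
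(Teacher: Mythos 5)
Your proof is correct and takes essentially the same approach as the paper: the trivial inclusions in one direction, and for the reverse, Lemma \ref{lemma:hl_bound} to control the tail integral $\int_{\R^n}(M1_{\lambda Q})^p w$ by $\int_{\R^n}(M1_Q)^p w$ together with a covering of an arbitrary $Q$ by finitely many dyadic cubes of comparable size (for $C_p^\Ds \subseteq C_p$). One small remark: for $C_p^\gamma \subseteq C_p$ your partition of $Q$ into $N^n$ subcubes is unnecessary. Your own stability observation already does the whole job in one step — given $w\in C_p^\gamma$, a cube $Q$ and $E\subseteq Q$, apply the $C_p^\gamma$ condition to $Q$ itself and then replace $\int_{\R^n}(M1_{\gamma Q})^p w$ by $C_{\gamma,n,p}\int_{\R^n}(M1_Q)^p w$ via Lemma \ref{lemma:hl_bound}; this preserves $\eps$ and only enlarges the constant, which is precisely how the paper disposes of this case.
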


\begin{proof}
  The inclusions
$$
    C_p \subset C_p^\Ds \quad \text{ and } \quad C_p \subset C_p^\gamma \subset C_p^\text{weak}
$$
  are obvious and
$$
    C_p \supset C_p^\gamma \supset C_p^\text{weak}
$$
  follow from Lemma \ref{lemma:hl_bound}. Thus, we only need to show that $C_p^\Ds \subset C_p$.

  Suppose that $w \in C_p^\Ds$ and let $Q \subset \R^n$ be any cube and $E \subset Q$ a measurable set. There exists $2^n$ dyadic cubes $Q_i \in \Ds$ and a uniformly bounded constant $\alpha \ge 1$ such that
  \begin{enumerate}
    \item[1)] the cubes $Q_i$ are pairwise disjoint,
    \item[2)] $\ell(Q_i) \approx \ell(Q)$,
    \item[3)] $Q \subset \bigcup_i Q_i \subset \alpha Q$.
  \end{enumerate}
  Applying the $C_p^\Ds$ property to the sets $Q_i \cap E$ and Lemma \ref{lemma:hl_bound} to $M(1_{\alpha Q})$ gives us
  \begin{align*}
    w(E) = \sum_i w(E \cap Q_i)
         &\le C \sum_i \Big( \frac{|E \cap Q_i|}{|Q_i|}\Big)^\eps \int_{\R^n} (M1_{Q_i})^p w \\
         &\lesssim C \sum_i \Big( \frac{|E|}{|Q|} \Big)^\eps \int_{\R^n} M(1_{\alpha Q})^p w \\
         &\lesssim C 2^n \Big( \frac{|E|}{|Q|} \Big)^\eps \int_{\R^n} (M1_{Q})^p w. \qedhere
  \end{align*}
\end{proof}

\section{$\widetilde{C}_p$ and Kahanp\"a\"a--Mejlbro counterexample revisited}
\label{section:kahanpaamejlbro}

This last section is devoted to the counterexample constructed by Kahanp\"a\"a and Mejlbro in \cite{kahanpaamejlbro}
and the $C_\psi$ classes introduced by Lerner in \cite{lerner_remarks}. These classes are generalizations of $C_p$ classes that depend on a Young function $\psi$ instead of $p$. Because of the limited availability of \cite{kahanpaamejlbro}, and for convenience of the reader, we give a self-contained description of their counterexample.

We give a detailed proof of the failure of the self-improving properties of $C_p$ classes and generalize this also to the context of $C_\psi$ for a carefully chosen $\psi$. Although we use many central ideas of Kahanp\"a\"a and Mejlbro, the proof we present here is different from the one given in  \cite{kahanpaamejlbro}. In particular, we avoid using the explicit Hilbert transform estimates that had a key role in \cite{kahanpaamejlbro} and our techniques allow us to consider dimensions higher than $1$.

Let us start by recalling the central results and objects.

\subsection{The Kahanp\"a\"a--Mejlbro weights}

As we mentioned earlier, Muckenhoupt's conjecture would be trivially true if every $C_p$ weight was self-improving
with respect to $p$. Unfortunately, this is not true due to a construction by Kahanp\"a\"a and Mejlbro.
For every $k \in \Z$, let us denote
$$
  I_k \coloneqq [4k-3, 4k-1] \quad \text{ and } \quad \Omega_k \coloneqq \Big[4k - \frac{1}{2}\ell_k, 4k + \frac{1}{2} \ell_k \Big],
$$
where $\ell_k \in (0,1]$ are numbers such that $\inf_{k \in \Z} \ell_k = 0$. Let also
$h_k$ be numbers such that $0 < h_k < N$ for every $k  \in \Z$ and some universal constant $N$.
We define the weight $w$ as
\begin{equation}
  \label{THEWEIGHT}
  w = \sum_{k \in \Z} 1_{I_k} + \sum_{k \in \Z} h_k 1_{\Omega_k}.
\end{equation}

We note that in \cite{kahanpaamejlbro}, the sum in the definition of $w$ was indexed as
$k \geq 0$. Here we write $k \in \Z$ because of symmetry and because this way
it is easier to generalize the constructions to higher dimensions.

\begin{theorem}[{\cite[Theorem 11, Proposition 12]{kahanpaamejlbro}}]
  \label{thm:kahanpaamejlbro}
  Let $p>1$. For suitable choices of the numbers $h_k$, the weight $w$ satisfies $w \in C_p$ and $w \notin C_{p+\eps}$ for any $\eps > 0$.
  In particular,
  \begin{align}
    \label{failure:self_improving} C_p \setminus \bigcup_{q > p} C_q \neq \emptyset.
  \end{align}
\end{theorem}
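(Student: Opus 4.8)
The plan is to construct the weight $w$ of \eqref{THEWEIGHT} with a specific choice of the parameters $h_k$ and $\ell_k$ so that it lies in $C_p$ but in no $C_q$ with $q>p$. The skeleton of the argument is to balance two competing requirements: the ``bumps'' $h_k 1_{\Omega_k}$ must be large enough and narrow enough that the $C_q$ inequality fails for every $q>p$ when tested against the sets $E=\Omega_k$ inside the cubes $I_k\cup\Omega_k$ (or suitable enlargements), yet small enough that the $C_p$ inequality still holds uniformly over \emph{all} cubes and all measurable subsets.

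First I would fix notation for the tail integrals. For a cube (interval) $Q$, the quantity $\int_{\R}(M1_Q)^p w$ essentially behaves like $\sum_{k}\bigl(\tfrac{\ell(Q)}{\ell(Q)+\dist(Q,I_k\cup\Omega_k)}\bigr)^p\,w(I_k\cup\Omega_k)$, which is comparable to $\ell(Q)^p$ times a convergent series once $p>1$; this is where the $p>1$ hypothesis enters and why compactly supported pieces are harmless (cf. item vi) in Section \ref{section:weak_cp}). I would record the elementary estimate $\int_\R (M1_Q)^p w \gtrsim_p \ell(Q)^p$ and, for $Q$ of unit scale centered near $4k$, $\int_\R(M1_Q)^p w \approx_p w(I_k)+h_k\ell_k + (\text{a } k\text{-independent tail})\approx_p 1$. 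The failure of $C_{p+\eps}$ is then read off by taking $Q_k = I_k\cup\Omega_k$ (a fixed-length interval) and $E=\Omega_k$: the left side is $w(E)=h_k\ell_k$, the ratio $|E|/|Q|\approx \ell_k$, and the right side is $\approx_p (\ell_k)^{\eps'}\cdot O(1)$ for the relevant exponent; choosing $h_k\to\infty$ and $\ell_k\to 0$ along a sequence coupled by, say, $h_k \approx \ell_k^{-1}\cdot(\log(1/\ell_k))^{-1}$ (or $h_k=\ell_k^{p-q_k}$ with $q_k\downarrow p$) forces $h_k\ell_k$ to beat $(\ell_k)^{\eps}$ for every fixed $\eps>0$ once $k$ is large, so no $C_{p+\eps}$ inequality can hold. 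One must also check that this choice keeps $h_k<N$ bounded — if not, as in \cite{kahanpaamejlbro}, one instead lets $h_k\to\infty$ slowly and adjusts, or truncates; I would follow the parameter regime of \cite[Proposition 12]{kahanpaamejlbro}.

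The substantive step is verifying $w\in C_p$: one must produce $C,\eps>0$ with $w(E)\le C(|E|/|Q|)^\eps\int_\R(M1_Q)^p w$ for \emph{all} cubes $Q$ and \emph{all} $E\subseteq Q$. I would split into cases by the scale and location of $Q$: (a) $Q$ much smaller than unit length — then $Q$ meets at most one plateau piece, $w$ is essentially constant (equal to $1$, $h_k$, or $0$) on the support it sees, so the inequality degenerates to the trivial $w(E)\le w(Q)\le (\text{const})\cdot\int(M1_Q)^pw$ with room to spare from $\ell(Q)^p$; (b) $Q$ of order unit length — finitely many pieces are involved and the worst case is exactly the test configuration above, where the chosen decay of $h_k\ell_k$ versus $\ell_k$ gives a genuine (if $k$-dependent, hence one must be careful to extract a uniform $\eps$) power gain — this is the delicate point, handled by noting $h_k\ell_k \le (\ell_k)^{\eps_0}$ for a \emph{fixed} small $\eps_0$ and all $k$, which the parameter choice guarantees; (c) $Q$ much larger than unit length — then $\int_\R(M1_Q)^pw \gtrsim_p \ell(Q)^p$ while $w(E)\le w(Q)\lesssim \ell(Q)\cdot N$ (the plateaus contribute $O(\ell(Q))$ since they have bounded density), so $w(E)/\int(M1_Q)^pw \lesssim \ell(Q)^{1-p}\le 1$ and one still needs the $(|E|/|Q|)^\eps$ factor, which is supplied because $E$ with small relative measure can only capture a proportionally small part of the bounded-density mass. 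The main obstacle I anticipate is case (b): obtaining a \emph{single} $\eps>0$ that works simultaneously for all $k$ while the bumps are engineered to defeat \emph{every} $C_{p+\eps}$ — this is precisely the tension Kahanp\"a\"a and Mejlbro resolve by a logarithmic coupling of $h_k$ and $\ell_k$, and I would mirror that choice, then assemble the three cases with Lemma \ref{lemma:hl_bound} to pass between $M1_Q$ and $M1_{\alpha Q}$ when comparing a general cube to the dyadic-type pieces, concluding \eqref{failure:self_improving}.
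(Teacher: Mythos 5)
Your overall plan (build the Kahanp\"a\"a--Mejlbro weight, couple $h_k$ to $\ell_k$, verify $C_p$ and disprove $C_{p+\lambda}$) is the right one, but the concrete execution has a gap that would kill the argument, rooted in the choice of test set and in the parameter regime.

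The paper's route (Theorem~\ref{theorem-counterexample} i)--ii) with the choice $h_k = (\ell_k)^{p-1}$) detects the exponent $q$ by testing the $C_q$ condition with $E = Q = \Omega_k$ and then invoking the ``hole'' estimate of Remark~\ref{REMARK-Cp-tail-hole}: since $w$ vanishes on a unit neighbourhood of $\Omega_k$, one gets $\int_{\R}(M1_{\Omega_k})^q w \lesssim \int_{\R\setminus\Omega_k}(M1_{\Omega_k})^q w \lesssim \ell_k^q$, whence $h_k\ell_k \lesssim \ell_k^q$, i.e.\ $h_k \lesssim \ell_k^{q-1}$. This is exactly where $q$ enters; taking $h_k = \ell_k^{p-1}$ then makes $C_q$ impossible for every $q>p$ because $\ell_k^{p-q}\to\infty$. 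Your test instead fixes a unit-scale $Q$ and takes $E=\Omega_k\subsetneq Q$. For such $Q$ the tail integral $\int_\R(M1_Q)^q w$ is comparable to $1$ \emph{uniformly in $q$} in the relevant range, so the test only produces the inequality $h_k\ell_k\le C(\ell_k)^{\eps}$, which is a constraint on the auxiliary exponent $\eps$ in \eqref{cond:c_p}, not on $q$. It therefore cannot distinguish $C_p$ from $C_{p+\lambda}$. Worse, if your parameters made this test fail, they would make it fail for $q=p$ as well, contradicting the other half of what you want.

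This is indeed what happens with the parameter choices you float. With $h_k\approx \ell_k^{-1}(\log(1/\ell_k))^{-1}$ (or, more generally, any $h_k\to\infty$) the construction \eqref{THEWEIGHT} is not even admissible, since it requires $0<h_k<N$ uniformly. More importantly, the inequality you lean on in case (b) of the $C_p$ verification, namely $h_k\ell_k\le (\ell_k)^{\eps_0}$ for a fixed $\eps_0>0$ and all $k$, is false for that choice: $h_k\ell_k\approx(\log(1/\ell_k))^{-1}$ decays only logarithmically, while $(\ell_k)^{\eps_0}$ decays polynomially, so the ratio blows up. In fact, your own unit-scale test then shows $w\notin C_p$, so the proposed weight is not a member of $C_p$ at all. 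The logarithmic coupling you recall is the one used for the strictly different purpose of exhibiting $w\in\widetilde C_p\setminus\bigcup_{\eps>0}C_{p+\eps}$ (Theorem~\ref{thm:strict_inclusions}~b), with $h_k=\varphi_p(\ell_k)/\ell_k = \ell_k^{p-1}/\log^2(1+1/\ell_k)$); for the present statement the correct choice is the bounded one $h_k=(\ell_k)^{p-1}\le 1$. To repair the proposal you would need to (1) switch to $h_k=\ell_k^{p-1}$, (2) replace the unit-scale test by the test $E=Q=\Omega_k$, and (3) prove and use the hole estimate $\int_\R(M1_Q)^q w\lesssim\int_{\R\setminus Q}(M1_Q)^q w$ for $w\in C_q$; without step (3) the argument for necessity of $h_k\lesssim\ell_k^{q-1}$ is circular, since the $\Omega_k$-part of the tail integral is exactly $w(\Omega_k)$.

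Your three-case analysis (a)/(b)/(c) for the sufficiency of $h_k=(\ell_k)^{p-1}$ is in the right spirit and lines up with the paper's cases 1, 2a, 2b in the proof of Theorem~\ref{theorem-counterexample}~ii), and the use of Lemma~\ref{lemma:hl_bound} to pass between $M1_Q$ and $M1_{\alpha Q}$ is also what the paper does; but as written the sufficiency half rests on an inequality that is false for your parameters, so that part does not go through either.
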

The property \eqref{failure:self_improving} can also be seen as a corollary of Theorem \ref{thm:strict_inclusions} a).

\subsection{The $C_\psi$ classes of Lerner}

The classes $C_\psi$ were introduced by Lerner in \cite{lerner_remarks} as intermediate classes between
$C_p$ and $C_q$ for $q > p \ge 1$ and a new way to attack Muckenhoupt's conjecture. To be more precise,
we define generalizations of $C_p$ classes that depend on a Young function $\psi$ instead of $p$.
As we will see, the choice of the function $\psi$ affects the structure of the class in a significant way.

Let $\psi$ be a function defined on $[0,1]$. We denote $w \in C_\psi$ if there exist constants
$C_w, \varepsilon_w > 0$ such that for every cube $Q$ and measurable $E\subset Q$ we have
\begin{equation}
  \label{C-psi}
  w(E) \le C_w \Big( \frac{|E|}{|Q|}\Big)^{\varepsilon_w} \int_{\R^n } \psi \big( M1_Q \big) w.
\end{equation}
Without loss of generality, we may assume that $C_w \ge 1$.

\begin{example}
  If we choose the function $\psi$ in a suitable way, we recover classes that we have considered earlier:
  \begin{enumerate}
    \item[$\bu$] Let $\psi_p(t) = t^p$, for $1 < p < \infty$. Then $C_{\psi_p} = C_p$.
    \item[$\bu$] Let $\psi_\infty = 1_{\{1\}}$. Then we have $\psi_\infty(M1_Q) = 1_Q$ and thus, $C_{\psi_\infty} = A_\infty$.
    \item[$\bu$] Let $0 < a < 1$ and $\psi_a = 1_{[a,1]}$. Then we have $\psi_a(M1_Q) = 1_{C_a Q}$ for some constant $C_a > 1$ and thus, $C_{\psi_a} = A_\infty^\text{weak}$.
  \end{enumerate}
\end{example}

For the rest of the paper, we consider a $C_\psi$ class with a carefully chosen $\psi$:
\begin{defin}
  Let $p > 1$. We set $\widetilde{C}_p \coloneqq C_{\varphi_p}$ for the function $\varphi_p$ such that $\varphi_p(0) = 0$ and
 $$
    \varphi_p(t) = \frac{t^p}{\log^2(1+\frac{1}{t})}, \quad t\in (0,1].
$$
\end{defin}
For notational convenience, we also set $\varphi_p(t) = \varphi_p(1)$ for every $t > 1$.
It is straightforward to check that the function $\varphi_p$ satisfies the following properties:
\begin{enumerate}
  \item $\lim_{t \to 0} \varphi_p(t) = 0$ and $\varphi_p(1) = \tfrac{1}{\log^2 2} > 1$,

  \item both $\varphi_p$ and $t \mapsto t^{-1} \varphi_p(t)$ are increasing functions,

  \item $\varphi_p(2t) \le C \varphi_p(t)$ for some $C > 0$ and
        all $t \ge 0$ (and thus, $\varphi_p(\lambda t) \le C_\lambda \varphi_p(t)$ for any $\lambda > 1$ and $t \ge 0$),

  \item $\int_0^1 \varphi_p(t) \frac{dt}{t^{p+1}} < \infty$.
\end{enumerate}

The key property of $\widetilde{C}_p$ is that $\bigcup_{q > p} C_q \subset \widetilde{C}_p$ and we have
\begin{equation}
  \label{implications:sharp_max}
  w \in \widetilde{C}_p \ \ \implies \ \ \|Mf \|_{L^p(w)} \lesssim \|M^\sharp f\|_{L^p(w)}
                            \implies \ \ w \in C_p,
\end{equation}
where $M^\sharp$ is the sharp maximal operator of Fefferman and Stein \cite{feffermanstein_hp}. The implications \eqref{implications:sharp_max} were first proven by Yabuta \cite[Theorem 1, Theorem 2]{yabuta} for $w \in \bigcup_{q > p} C_q$ and then improved by Lerner \cite[Theorem 6.1]{lerner_remarks} to this form. By \cite[Remark 6.2]{lerner_remarks} and \cite[Subsection 1.5]{cejasliperezrivera-rios}, we know that this result also gives us \eqref{ineq:coifman-fefferman} for e.g. Calder\'on--Zygmund operators and every $w \in \widetilde{C}_p$.

\begin{theorem}[{\cite[Remark 6.2]{lerner_remarks},\cite[Subsection 1.5]{cejasliperezrivera-rios}}]
\label{thm:Cptilde}
In any dimension, we have: If  $w \in \widetilde{C}_p$ then  \eqref{ineq:coifman-fefferman} holds for
Calder\'on--Zygmund operators.
\end{theorem}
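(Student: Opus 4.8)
The plan is to obtain Theorem~\ref{thm:Cptilde} by combining two facts, each essentially available in the literature: the Fefferman--Stein inequality for $\widetilde{C}_p$ weights due to Lerner, and the classical pointwise estimate comparing a Calder\'on--Zygmund operator $T$ to the fractional sharp maximal function. Throughout, $p>1$ is the fixed exponent (this is implicit in the definition of $\widetilde{C}_p$) and $w\in\widetilde{C}_p$.

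The first step is to fix some $0<\delta<1$ and record the two pointwise bounds
$$
  |Tf(x)|\le M_\delta(Tf)(x)\qquad\text{and}\qquad M^\sharp_\delta(Tf)(x)\le c_{n,\delta}\,C_K\,Mf(x)
$$
for a.e.\ $x\in\R^n$, where $M_\delta g:=\big(M(|g|^\delta)\big)^{1/\delta}$ and $M^\sharp_\delta g(x):=\sup_{Q\ni x}\inf_{c\in\R}\big(\avgint_Q|g-c|^\delta\big)^{1/\delta}$. The first bound is just Lebesgue differentiation. The second is the Alvarez--P\'erez estimate: its proof splits $Tf=T(f1_{3Q})+T(f1_{(3Q)^c})$, controls the local term using only the weak $(1,1)$ boundedness of $T$ together with Kolmogorov's inequality---which is precisely why one is forced to take $\delta<1$, since $T$ fails to be bounded on $L^1$---and controls the tail term using the Calder\'on--Zygmund smoothness of the kernel.

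The second step is to invoke the Fefferman--Stein inequality for $\widetilde{C}_p$ weights. Its $\delta=1$ instance is the first implication in \eqref{implications:sharp_max}, namely $\|Mh\|_{L^p(w)}\lesssim\|M^\sharp h\|_{L^p(w)}$; running Lerner's local mean oscillation / Marcinkiewicz-integral argument \cite[Theorem~6.1]{lerner_remarks} with $\delta$-oscillations in place of ordinary ones---this is where properties (1)--(4) of $\varphi_p$ and the summation estimates of Section~\ref{section:Marcinkiewicz}, in particular Corollary~\ref{Corollary-norm}, enter---gives the more general inequality
$$
  \|M_\delta g\|_{L^p(w)}\lesssim\|M^\sharp_\delta g\|_{L^p(w)}.
$$
Applying this with $g=Tf$ and chaining it with the two bounds of the first step yields
$$
  \|Tf\|_{L^p(w)}\le\|M_\delta(Tf)\|_{L^p(w)}\lesssim\|M^\sharp_\delta(Tf)\|_{L^p(w)}\lesssim\|Mf\|_{L^p(w)},
$$
which is \eqref{ineq:coifman-fefferman}. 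As usual, one first proves this for $f$ bounded with compact support---so that $Tf$ lies in every $L^{p_0}$, $1<p_0<\infty$, hence the maximal functions above are honest, a.e.\ finite functions and the displayed chain is legitimate (after the standard truncation used in the proof of the Fefferman--Stein inequality)---and then removes the restriction by a limiting argument. Nothing in the argument is particular to dimension one, so the statement holds in all dimensions.

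I expect the main obstacle to be the middle step: one has to make sure that Lerner's $\widetilde{C}_p$ Fefferman--Stein inequality holds in the $M_\delta/M^\sharp_\delta$ form under the \emph{same} hypothesis $w\in\widetilde{C}_p$, rather than a stronger one. The cheap reduction to the case $\delta=1$ via the substitution $g\mapsto|g|^\delta$ is of no help: it turns the exponent $p$ into $p/\delta>p$ and so would only give the conclusion for the strictly smaller class $\widetilde{C}_{p/\delta}$. The point is that $M^\sharp_\delta g$ dominates $\big(M^\sharp(|g|^\delta)\big)^{1/\delta}$ but is not comparable to it, so the inequality with $M^\sharp_\delta$ on the right is genuinely weaker, and Lerner's local mean oscillation argument produces it directly, keeping $\widetilde{C}_p$ intact. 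For the precise execution of this step I would refer to \cite[Theorem~6.1, Remark~6.2]{lerner_remarks} and \cite[Subsection~1.5]{cejasliperezrivera-rios}.
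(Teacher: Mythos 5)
Your argument is correct and coincides with the one the paper defers to via its citations: the paper states this theorem without proof, pointing to \cite[Remark 6.2]{lerner_remarks} and \cite[Subsection 1.5]{cejasliperezrivera-rios}, and those references run exactly the $M_\delta$/$M^\sharp_\delta$ chain you describe. You have also correctly isolated the key subtlety---that the naive substitution $g\mapsto |g|^\delta$ would only yield the conclusion under the strictly stronger hypothesis $w\in\widetilde{C}_{p/\delta}$, so the $\delta$-oscillation form of the Fefferman--Stein inequality must be established directly, keeping the $\widetilde{C}_p$ hypothesis intact.
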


\subsection{The Kahanp\"a\"a--Mejlbro weights and $\widetilde{C}_p$}

Since $\varphi_p(t) \lesssim t^p$ for all $t \in [0,1]$, we have $\widetilde  C_p \subset C_p$. On the other hand, since $t^q \lesssim \varphi_p(t)$ for every $q > p$, we have
$C_q \subset \widetilde C_p$ for any $q > p$. Thus, for any $p > 1$, we have
\begin{equation}
  \label{inclusions:strict}
  \bigcup_{q>p} C_q = \bigcup_{\varepsilon >0} C_{p+\varepsilon} \subset \widetilde C_p \subset C_p.
\end{equation}
This raises a natural question: Are these inclusions strict? If the first one is not,
we get a self-improving property for $\widetilde{C}_p$ weights. If the second one is not,
we have solved Muckenhoupt's conjecture. Unfortunately, we will next show that $\widetilde{C}_p \setminus \bigcup_{q > p} C_q \neq \emptyset$ and $C_p \setminus \widetilde{C}_p \neq \emptyset$. This does not prove or disprove Muckenhoupt's conjecture but it is one step closer to understanding the solution.

Our main tool for proving that the inclusions in \eqref{inclusions:strict} are strict in dimension one is the following generalization of Kahanp\"a\"a--Mejlbro techniques:

\begin{theorem}\label{theorem-counterexample}
  Let $1 < p < \infty$ and let $w$ be a weight as in \eqref{THEWEIGHT}.
  \begin{enumerate}
    \item[i)] If $w \in C_p$, then $h_k \lesssim (\ell_k)^{p-1}.$
    \item[ii)] If $h_k = (\ell_k)^{p-1}$, then $w\in C_p$.
    \item[iii)] If $w \in \widetilde C_p$, then $h_k \lesssim \int_0^{\ell_k} \varphi_p(t) \frac{dt}{t^2}.$
    \item[iv)] If $h_k = \frac{\varphi_p(\ell_k)}{\ell_k}$, then $w\in \widetilde C_p$.
  \end{enumerate}
\end{theorem}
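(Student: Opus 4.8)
The plan is to treat the four items as two \emph{necessity} statements (i) and iii)), each proved by testing the defining inequality on a single well-chosen pair $(Q,E)$ and absorbing, and two \emph{sufficiency} statements (ii) and iv)), each proved by a case analysis over all cubes.

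For i) and iii) I would fix $k$ with $\ell_k$ small, set $E=\Omega_k$ and take $Q=\lambda\Omega_k$, the concentric dilate by a large constant $\lambda=\lambda(w)$ to be chosen, small enough that $\lambda\ell_k<1$, so that $Q$ lies in the gap $(4k-1,4k+1)$ and meets $\text{supp}\,w$ only in $\Omega_k$. Then $M1_Q\equiv1$ on $Q$, so $\int_Q(M1_Q)^pw=w(Q)=h_k\ell_k$ and $\int_Q\varphi_p(M1_Q)w=\varphi_p(1)h_k\ell_k$, while the tail is controlled by the adjacent intervals $I_k,I_{k+1}$ on which $M1_Q\approx\lambda\ell_k$; summing the geometrically decaying contributions of the remaining $I_j$ and $\Omega_j$ (using $h_j<N$, $\ell_j\le1$, $p>1$, and for iii) the estimate $\varphi_p(t/m)\le m^{-p}\varphi_p(t)$) gives, with absolute implicit constants,
$$\int_{\R}(M1_Q)^pw\lesssim h_k\ell_k+(\lambda\ell_k)^p,\qquad \int_{\R}\varphi_p(M1_Q)w\lesssim h_k\ell_k+\varphi_p(\lambda\ell_k).$$
Inserting these into the defining inequality for $w$ yields $h_k\ell_k\le C\lambda^{-\varepsilon}\bigl(h_k\ell_k+c(\lambda\ell_k)^p\bigr)$ (and its $\varphi_p$-analogue), and fixing $\lambda$ so that $cC\lambda^{-\varepsilon}\le\tfrac12$ lets me absorb the $h_k\ell_k$ term and conclude $h_k\lesssim\ell_k^{p-1}$, respectively $h_k\lesssim\varphi_p(\lambda\ell_k)/\ell_k\lesssim_\lambda\varphi_p(\ell_k)/\ell_k\approx_p\int_0^{\ell_k}\varphi_p(t)\,\frac{dt}{t^2}$ (the last comparison because $t\mapsto t^{-1}\varphi_p(t)$ is increasing). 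The complementary range where $\ell_k$ stays bounded below is trivial, since $h_k<N$ while the right-hand sides are then $\gtrsim1$.

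For ii) and iv) I would verify the defining inequality for every interval $Q$ of length $r$ and every $E\subset Q$, with $h_k=\ell_k^{p-1}$ for ii) and $h_k=\varphi_p(\ell_k)/\ell_k$ for iv). The lower bound for $\int_{\R}(M1_Q)^pw$ has three sources: $\ge w(Q)$ always; the background ``halo'', which since every point is within distance $1$ of some length-$2$ interval $I_j$ contributes $\gtrsim r^p$ when $r\le2$ and $\gtrsim r$ when $r\ge2$; and, when a spike $\Omega_k$ of length $\ell_k>r$ overlaps $Q$, integrating $(M1_Q)^p$ over $\Omega_k$ contributes the extra amount $\gtrsim h_kr$. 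For the upper bound I would write $w(E)=\bigl|E\cap\bigcup_jI_j\bigr|+\sum_jh_j|E\cap\Omega_j|$, bound the first term by $|E|\le r^{1-\varepsilon}|E|^\varepsilon$, and bound each term of the sum using $|E\cap\Omega_j|\le\min(\ell_j,r)$ and $h_j=\ell_j^{p-1}$ by $\ell_j^{p-1}\min(\ell_j,r)^{1-\varepsilon}|E\cap\Omega_j|^\varepsilon$, summing over the $O(1+r)$ spikes meeting $Q$ by H\"older (with $\varepsilon<p$ chosen small enough that the relevant power of $\ell_j$ is at least $\ell_j$, so the $\ell_j$-sum is $\lesssim1+r$). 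Matching the two bounds case by case — according to whether $r$ is smaller or larger than $2$, whether $Q$ lies in an $I_j$, in a gap, or straddles a boundary, and whether the spikes meeting $Q$ are larger or smaller than $r$ — should give $w(E)\lesssim(|E|/|Q|)^\varepsilon\int_{\R}(M1_Q)^pw$ in all cases. Part iv) follows the same scheme with $t^p$ replaced by $\varphi_p(t)$ throughout, now using all four recorded properties of $\varphi_p$: doubling (property (3)) to pass between $\varphi_p(\lambda t)$ and $\varphi_p(t)$, monotonicity of $\varphi_p$ and of $t^{-1}\varphi_p(t)$ (properties (1)--(2)) in the halo estimates, $\int_0^1\varphi_p(t)\,t^{-p-1}\,dt<\infty$ (property (4)) for the convergence of the tails, and the identification $\int_0^a\varphi_p(t)\,t^{-2}\,dt\approx_p\varphi_p(a)/a$.

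I expect the sufficiency direction (ii) and iv)) to be the main obstacle. The delicate configuration is a small cube $Q$ sitting inside or next to a comparatively large spike $\Omega_k$: there one must use the halo lower bound $\int(M1_Q)^pw\gtrsim h_kr$ rather than the cruder $\ge w(Q)$, and pair it with the sharp bound $|E\cap\Omega_k|\le\min(\ell_k,r)$ — this is precisely the regime in which the necessity estimate of i)/iii) is sharp, so the two directions have to be balanced against each other. For iv) the additional work is essentially bookkeeping: every comparison that is transparent for the power $t^p$ must be re-derived for the non-power function $\varphi_p$ from its listed structural properties.
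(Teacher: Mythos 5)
Your overall architecture matches the paper's: for the necessity statements i) and iii) you test on (a dilate of) $\Omega_k$ and absorb, and for the sufficiency statements ii) and iv) you verify the defining inequality via a case analysis on the position and scale of the test cube. For i) and iii) the paper first isolates the absorption step in a separate statement (Lemma \ref{lemma-hole} and Remark \ref{REMARK-Cp-tail-hole}, obtained by applying the $C_p$/$\widetilde C_p$ condition with $E=Q$ and $\alpha Q$ for a fixed dilate $\alpha$) and then estimates only the tail $\int_{\R\setminus\Omega}(M1_\Omega)^p w$; you instead inline that absorption by testing directly on the dilated cube $\lambda\Omega_k$ and choosing $\lambda$ large. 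These are the same idea written in two orders, and the tail estimate that follows is identical, so i) and iii) are fine (your reduction $\int_0^{\ell_k}\varphi_p(t)\,t^{-2}\,dt\approx_p\varphi_p(\ell_k)/\ell_k$ is correct, using the monotonicity of $t\mapsto t^{-1}\varphi_p(t)$ for the lower bound and $\varphi_p(2^{-j}\ell)\le 2^{-jp}\varphi_p(\ell)$ for the upper one).

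For ii) and iv), however, the lower bound you record for the halo is not strong enough in the case $|I\cap A|>0$. You state that ``the background halo contributes $\gtrsim r^p$ when $r\le2$'', which is the right estimate when $Q$ sits strictly inside a gap at distance $\sim 1$ from the nearest $I_j$, but it is \emph{not} the right estimate when $Q$ touches the support set $A$: there one needs $\int(M1_Q)^pw\gtrsim r$, and $r^p$ is strictly smaller for small $r$. Concretely, take $Q$ of length $r\ll 1$ straddling the endpoint $4j-1$ of $I_j$, with $E\subset Q\cap I_j$ of measure comparable to $|Q\cap I_j|$; then $w(E)=|E|$ and none of your three listed sources (``$\ge w(Q)$'', halo $\gtrsim r^p$, spike $\gtrsim h_k r$) reaches the needed $\gtrsim r$, so the intended matching $w(E)\le (|E|/r)^\varepsilon r \le (|E|/r)^\varepsilon\int(M1_Q)^p w$ fails by a factor $r^{1-p}$. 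The paper closes this by observing $w(3I)\ge |3I\cap A|\gtrsim |I|$ via the quantitative homogeneity property \eqref{prop:homogeneity} of $A$ together with Lemma \ref{lemma:hl_bound} to pass from $M1_{3I}$ to $M1_I$; equivalently, once $Q$ is adjacent to an $I_j$ the halo on $I_j$ improves to $\gtrsim r$ because $M1_Q\gtrsim 1$ on a subset of $I_j$ of length $\gtrsim r$. You neither state the homogeneity of $A$ nor record this improved adjacency estimate, so as written your Case~1 does not close. This is a genuine gap, though a local one: once you replace the uniform ``$\gtrsim r^p$'' halo by the case-dependent ``$\gtrsim r$ whenever $|Q\cap A|>0$'' (using homogeneity of $A$) the rest of your matching argument for ii) and iv) should go through and reproduces the paper's Cases 1, 2a, 2b.
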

In i) and iii) we mean that the inequality holds for all $k$ with implicit constant independent of $k$. One can also prove similar statements as iii) and iv) for the more general class $C_\psi$ assuming that $\psi$ satisfies certain conditions, but for the sake of simplicity we only consider the class $\widetilde C_p$.

 Before giving the proof of Theorem \ref{theorem-counterexample}, we use the theorem to prove the strictness of the inclusions:

\begin{theorem}
  \label{thm:strict_inclusions}
  We have
  \begin{enumerate}
    \item[a)] $C_p \setminus \widetilde C_p \neq \emptyset$,
    \item[b)] $ \cup_{\varepsilon>0} C_{p+\varepsilon} \subsetneq \widetilde C_p$.
  \end{enumerate}
\end{theorem}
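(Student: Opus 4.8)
The plan is to read both parts off from Theorem~\ref{theorem-counterexample} by choosing the parameters $\ell_k$ and $h_k$ of the Kahanp\"a\"a--Mejlbro weight \eqref{THEWEIGHT} suitably. In each case I would fix a sequence $(\ell_k)_{k\in\Z}$ with $0<\ell_k\le 1$ and $\inf_k\ell_k=0$ (say $\ell_k=2^{-|k|}$, so that $\ell_k\to 0$ along a subsequence), so that there is no loss in assuming the weight has the form \eqref{THEWEIGHT}; the only thing to verify is that the resulting $h_k$ is uniformly bounded, which will follow from the monotonicity properties of $\varphi_p$ recorded after the definition of $\widetilde C_p$.

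For a), take $h_k=(\ell_k)^{p-1}$. Then $w\in C_p$ by part ii) of Theorem~\ref{theorem-counterexample}. To see $w\notin\widetilde C_p$ I would use part iii): it is enough to show that $h_k\big/\int_0^{\ell_k}\varphi_p(t)\,t^{-2}\,dt$ is unbounded over $k$. Since $t\mapsto\log(1+\tfrac1t)$ is decreasing, for $t\le\ell$ we have $\log(1+\tfrac1t)\ge\log(1+\tfrac1\ell)$, whence
\[
  \int_0^{\ell}\varphi_p(t)\,\frac{dt}{t^2}=\int_0^{\ell}\frac{t^{p-2}}{\log^2(1+\tfrac1t)}\,dt\le\frac{1}{\log^2(1+\tfrac1\ell)}\int_0^{\ell}t^{p-2}\,dt=\frac{\ell^{p-1}}{(p-1)\log^2(1+\tfrac1\ell)} .
\]
Hence $\int_0^{\ell_k}\varphi_p(t)\,t^{-2}\,dt=o\big((\ell_k)^{p-1}\big)=o(h_k)$ along $\ell_k\to0$, contradicting the necessary condition in iii); therefore $w\notin\widetilde C_p$ and a) follows.

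For b), the inclusion $\bigcup_{\varepsilon>0}C_{p+\varepsilon}\subset\widetilde C_p$ is already contained in \eqref{inclusions:strict}, so only strictness is at issue. Take $h_k=\varphi_p(\ell_k)/\ell_k$; by monotonicity of $t\mapsto t^{-1}\varphi_p(t)$ this is bounded by $\varphi_p(1)=1/\log^2 2$, and part iv) of Theorem~\ref{theorem-counterexample} gives $w\in\widetilde C_p$. Now fix $\varepsilon>0$ and apply part i) with exponent $p+\varepsilon$: if $w$ were in $C_{p+\varepsilon}$ we would need $h_k\lesssim(\ell_k)^{p+\varepsilon-1}$, but
\[
  \frac{h_k}{(\ell_k)^{p+\varepsilon-1}}=\frac{\varphi_p(\ell_k)}{(\ell_k)^{p+\varepsilon}}=\frac{1}{(\ell_k)^{\varepsilon}\log^2(1+\tfrac1{\ell_k})}\longrightarrow\infty
\]
along $\ell_k\to0$, since a positive power of $1/\ell$ eventually dominates any power of $\log(1/\ell)$. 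Thus $w\notin C_{p+\varepsilon}$ for every $\varepsilon>0$, i.e. $w\notin\bigcup_{\varepsilon>0}C_{p+\varepsilon}$, and b) is proved.

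I do not expect a real obstacle here once Theorem~\ref{theorem-counterexample} is in hand: the content reduces to the two elementary asymptotics above (that $\int_0^\ell\varphi_p(t)\,t^{-2}\,dt$ is of smaller order than $\ell^{p-1}$, and that $\ell^{-\varepsilon}$ outgrows $\log^2(1/\ell)$). The only point calling for a little attention is checking that each chosen $h_k$ stays uniformly bounded so that $w$ is genuinely a weight of the form \eqref{THEWEIGHT}, and this is exactly where the listed properties of $\varphi_p$ are used.
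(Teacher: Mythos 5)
Your proposal is correct and takes essentially the same approach as the paper: both parts use the Kahanp\"a\"a--Mejlbro weight from \eqref{THEWEIGHT} with exactly the same choices $h_k=(\ell_k)^{p-1}$ for a) and $h_k=\varphi_p(\ell_k)/\ell_k$ for b), together with parts i)--iv) of Theorem~\ref{theorem-counterexample}. The only (inessential) difference is in how the asymptotic in a) is verified: the paper evaluates $\lim_{t\to 0^+}\int_0^t\varphi_p(s)s^{-2}\,ds\,/\,t^{p-1}=0$ via L'H\^opital, while you obtain the sharper explicit bound $\int_0^\ell\varphi_p(t)t^{-2}\,dt\le\ell^{p-1}/\big((p-1)\log^2(1+\tfrac1\ell)\big)$ by monotonicity of $\log(1+\tfrac1t)$; your remark that one must check each $h_k$ is uniformly bounded (which the paper leaves implicit) is a worthwhile addition.
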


\begin{proof}
  We construct weights $w$ of the type \eqref{THEWEIGHT} and then use Theorem \ref{theorem-counterexample} to prove the claims.
  \begin{enumerate}
    \item[a)] Let us set $h_k = (\ell_k)^{p-1}$ for every $k \in \Z$. By part ii) of Theorem
               \ref{theorem-counterexample}, we know that $w \in C_p$. Let us then use part iii) of Theorem \ref{theorem-counterexample} to show that $w \notin \widetilde{C}_p$. It is enough to show that
              $$
                 \inf_{0<t<1} \frac{\int_0^{t} \varphi_p(s) \frac{ds}{s^2}}{t^{p-1}} =0.
        $$
               This can be seen easily by computing the limit as $t \to 0^+$: by L'H\^opital's rule and the Fundamental theorem of calculus, we have
   $$
                 \lim_{t \to 0^+} \frac{\int_0^{t} \varphi_p(s) \frac{ds}{s^2}}{t^{p-1}}
                 = \lim_{t \to 0^+} \frac{\varphi_p(t) t^{-2}}{(p-1) t^{p-2}}
                 = \frac{1}{p-1} \lim_{t \to 0^+} \frac{ \varphi_p(t)}{t^p}
                 = 0.
$$
               Thus, by part iii) of Theorem \ref{theorem-counterexample}, $w \notin \widetilde{C}_p$.

    \item[b)] Let us set
            $$
                h_k = \frac{ \varphi_p(\ell_k)}{\ell_k}= \frac{(\ell_k)^{p-1} }{\log^2(1+\frac{1}{\ell_k})}.
          $$
              for every $k \in \Z$. By part iv) of Theorem \ref{theorem-counterexample}, we know that $w \in \widetilde C_p$. We then use part i) of Theorem \ref{theorem-counterexample} to show that $w\not \in C_{p+\varepsilon}$ for any
              $\varepsilon>0$. To see this, we prove
            $$
                \inf_{0<t<1} \frac{t^{p+\varepsilon-1}}{\varphi_p(t) t^{-1}} =0.
            $$
              As in the previous case, we show this by computing the limit as $t \to 0^+$. We get
              $$
                \lim_{t \to 0^+} \frac{t^{p+\varepsilon-1}}{\varphi_p(t) t^{-1}}
                = \lim_{t \to 0^+} t^\eps \log^2\Big(\frac{1+t}{t}\Big)
                = \lim_{t \to 0^+} t^\eps \big( \log(1+t) - \log(t) \big)^2
                = 0,
           $$
              since $x^\alpha \log(x) \to 0$ as $x \to 0^+$ for any $\alpha > 0$. Thus, by part i) of Theorem \ref{theorem-counterexample}, $w \not \in C_{p+\varepsilon}.$ for any $\eps > 0$. \qedhere
  \end{enumerate}
\end{proof}
From Theorem \ref{thm:Cptilde} we know that $ \widetilde C_p$ is sufficient for \eqref{ineq:coifman-fefferman}, but from Theorem \ref{thm:strict_inclusions} b) there exists a weight $w \in \widetilde C_p \setminus \cup_{\eps>0}C_{p+\eps}$. In particular:

\begin{corollary}
  \label{cor:not_nec}
  The condition $C_{p+\varepsilon}$ is not necessary for \eqref{ineq:coifman-fefferman} to hold for Calder\'on--Zygmund operators.
\end{corollary}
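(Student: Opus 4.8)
The proof is a short combination of the two headline results of this section, so the plan is simply to produce one weight witnessing the claim. First I would invoke Theorem~\ref{thm:strict_inclusions}~b): it yields a weight $w$ of the form \eqref{THEWEIGHT}, namely the one with $h_k = \varphi_p(\ell_k)/\ell_k$, which belongs to $\widetilde{C}_p$ by part~iv) of Theorem~\ref{theorem-counterexample} but fails to belong to $C_{p+\varepsilon}$ for every $\varepsilon>0$ by part~i) of the same theorem, once one checks (as in the proof of Theorem~\ref{thm:strict_inclusions}) that $\lim_{t\to 0^+} t^{\varepsilon}\log^2\!\big(\tfrac{1+t}{t}\big)=0$.

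Second, I would apply Theorem~\ref{thm:Cptilde}: because $w\in\widetilde{C}_p$, the Coifman--Fefferman inequality \eqref{ineq:coifman-fefferman} holds for this $w$ and every Calder\'on--Zygmund operator. Since the same $w$ satisfies no $C_{p+\varepsilon}$ condition, the condition $C_{p+\varepsilon}$ cannot be necessary for \eqref{ineq:coifman-fefferman}, which is exactly the assertion of the corollary. In dimension one this is all; the higher-dimensional analogue is Theorem~\ref{thm:anyd}, obtained by replacing the interval construction behind \eqref{THEWEIGHT} by its $\Z^n$-indexed version described after \eqref{THEWEIGHT}.

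There is no genuine obstacle remaining at this point: every difficult ingredient has already been established. The real work sits in Theorem~\ref{theorem-counterexample}, whose parts~i)--iv) compare the Kahanp\"a\"a--Mejlbro weights simultaneously against the $C_p$ and $\widetilde{C}_p$ scales and are proven in arbitrary dimension via the tail-discretization technique (thus avoiding the explicit Hilbert-transform estimates of \cite{kahanpaamejlbro}), and in Theorem~\ref{thm:Cptilde}, which packages the sufficiency of $\widetilde{C}_p$ for \eqref{ineq:coifman-fefferman} through the results of Yabuta \cite{yabuta} and Lerner \cite{lerner_remarks}. If anything requires a moment of care, it is only bookkeeping: ensuring that the weight chosen in part~b) of Theorem~\ref{thm:strict_inclusions} is literally the one fed into Theorem~\ref{thm:Cptilde}, and tracking the dimension in the passage from Theorem~\ref{thm:strict_inclusions} to Theorem~\ref{thm:anyd}.
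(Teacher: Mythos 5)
Your proposal is correct and matches the paper's argument exactly: the paper also obtains the corollary by combining Theorem~\ref{thm:strict_inclusions}~b) (existence of $w\in\widetilde{C}_p\setminus\bigcup_{\varepsilon>0}C_{p+\varepsilon}$) with Theorem~\ref{thm:Cptilde} (sufficiency of $\widetilde{C}_p$ for \eqref{ineq:coifman-fefferman}). Your additional remarks on the explicit weight and the $n$-dimensional passage via Theorem~\ref{thm:anyd} are accurate but not part of the paper's (one-line) justification.
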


The following counterpart of \cite[Proposition 8]{kahanpaamejlbro} will be useful for us in the proof of Theorem~\ref{theorem-counterexample}.

\begin{lemma}
  \label{lemma-hole}
  Let $p>1$ and $w \in \widetilde{C}_p$. Then there exists a constant $C = C_{\varphi,w} > 0$ such that for any cube $Q$ we have
$$
    %
    \int_{\R^n} \varphi_p (M1_Q) w
    \le C \int_{\R^n \setminus Q} \varphi_p (M1_Q )w.
$$
  The constant $C$ depends on $\varphi_p$ and $w$.
\end{lemma}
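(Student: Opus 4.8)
The plan is to reduce the lemma to the single estimate $w(Q) \lesssim_w \int_{\R^n \setminus Q} \varphi_p(M1_Q)\,w$. Since $M1_Q \equiv 1$ on $Q$ and $M1_Q \le 1$ everywhere, we have $\int_{\R^n}\varphi_p(M1_Q)\,w = \varphi_p(1)\,w(Q) + \int_{\R^n\setminus Q}\varphi_p(M1_Q)\,w$, so once that estimate is in hand the lemma follows with $C = 1 + \varphi_p(1)\,C'$, where $C'$ is the implicit constant (and if the tail integral is infinite the lemma is trivial, while $w(Q)<\infty$ always since $w$ is locally integrable). Thus the whole content is the "reverse" bound: controlling the mass of $w$ on $Q$ by the mass of $\varphi_p(M1_Q)\,w$ away from $Q$.

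To prove this I would fix a large dilation parameter $N = N_w \ge 2$, depending only on $w$ and the dimension, chosen so that $C_w N^{-n\varepsilon_w}\varphi_p(1) \le \tfrac12$, where $C_w,\varepsilon_w>0$ are the $\widetilde{C}_p = C_{\varphi_p}$ constants of $w$ from \eqref{C-psi}; such an $N$ exists precisely because $\varepsilon_w>0$. Applying the $\widetilde{C}_p$ condition to the pair $Q \subset NQ$ gives $w(Q) \le C_w N^{-n\varepsilon_w}\int_{\R^n}\varphi_p(M1_{NQ})\,w$, and I split the integral over $NQ$ and $\R^n\setminus NQ$. On $NQ$ one has $M1_{NQ}\equiv 1$, so that part equals $\varphi_p(1)\,w(NQ) = \varphi_p(1)\big(w(Q) + w(NQ\setminus Q)\big)$; the term $\varphi_p(1)\,w(Q)$ is then absorbed into the left-hand side thanks to the choice of $N$ (legitimate because $w(Q)<\infty$).

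It remains to dominate $w(NQ\setminus Q)$ and $\int_{\R^n\setminus NQ}\varphi_p(M1_{NQ})\,w$ by a $w$- and $\varphi_p$-dependent multiple of the tail $\int_{\R^n\setminus Q}\varphi_p(M1_Q)\,w$. For the first, a one-line geometric check gives $M1_Q \ge (2N)^{-n}$ pointwise on $NQ$, hence $\varphi_p(M1_Q) \ge \varphi_p((2N)^{-n})>0$ on $NQ\setminus Q$ by monotonicity of $\varphi_p$, so $w(NQ\setminus Q) \le \varphi_p((2N)^{-n})^{-1}\int_{NQ\setminus Q}\varphi_p(M1_Q)\,w$. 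For the tail, Lemma \ref{lemma:hl_bound} applied with $E_0 = Q$ and $Q_0 = NQ$ (so $\eta = N^{-n}$) yields $M1_{NQ} \le C_n N^n M1_Q$ a.e., and then the doubling property of $\varphi_p$ (property (3) in the list following the definition of $\widetilde{C}_p$) gives $\varphi_p(M1_{NQ}) \le C_{\varphi_p,N}\,\varphi_p(M1_Q)$ with $C_{\varphi_p,N}$ a fixed constant; hence $\int_{\R^n\setminus NQ}\varphi_p(M1_{NQ})\,w \le C_{\varphi_p,N}\int_{\R^n\setminus Q}\varphi_p(M1_Q)\,w$. Collecting the three pieces, absorbing the $\tfrac12\,w(Q)$ term, and renaming constants finishes the proof.

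The one delicate point is the comparison $\varphi_p(M1_{NQ}) \approx \varphi_p(M1_Q)$ off $Q$: replacing $M1_{NQ}$ by $M1_Q$ costs a factor $\sim N^n$ inside $\varphi_p$, and iterating the doubling inequality to absorb it costs a multiplicative constant that blows up as $N\to\infty$. This is harmless here exactly because the lemma allows $C$ to depend on $w$ — we first fix $N$ using the quantitative exponent $\varepsilon_w$ so that the bad $w(Q)$-term can be absorbed, and only then pay the now-fixed, $N$-dependent doubling price. An analogous argument with a purely dimensional constant would not close, which matches the fact that the definition of $\widetilde{C}_p$ carries no built-in quantitative $\varepsilon$.
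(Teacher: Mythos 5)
Your proof is correct and follows essentially the same route as the paper: apply the $\widetilde{C}_p$ condition to the pair $Q\subset\alpha Q$ with $\alpha$ calibrated against $\varepsilon_w$ to absorb the $w(Q)$ term, then use Lemma \ref{lemma:hl_bound} plus the doubling of $\varphi_p$ to replace $M1_{\alpha Q}$ by $M1_Q$ off $Q$. The only (cosmetic) difference is that you split the integral at $NQ$ and therefore need the extra lower bound $M1_Q\gtrsim N^{-n}$ on the annulus $NQ\setminus Q$, whereas the paper splits at $Q$ and so avoids that step entirely.
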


\begin{proof}
  Let us fix a cube $Q$ and set $\alpha = (2 \varphi_p(1) C_w)^{\frac{1}{n \eps_w}}$, where $C_w$ and $\eps_w$ are the constants in the definition of $\widetilde{C}_p = C_{\varphi_p}$ \eqref{C-psi}. Notice that $\alpha \ge 1$. Now applying the $\widetilde{C}_p$ condition for $\alpha Q$ and $Q$ gives us
  \begin{align*}
    w(Q)
    &\le C_w \Big(\frac{|Q|}{\alpha^n |Q|}\Big)^{\eps_w} \int_{\R^n} \varphi_p(M1_{\alpha Q}) w = \frac{1}{2\varphi_p(1)} \int_{\R^n} \varphi_p(M1_{\alpha Q}) w \\
    &\le \frac{1}{2} w(Q) + \frac{1}{2\varphi_p(1)} \int_{\R^n \setminus Q} \varphi_p(M 1_{\alpha Q}) w,
  \end{align*}
  since $M1_{\alpha Q} = 1$ on $Q$ and $\varphi_p(1) > 1$. In particular,
$$
    w(Q) \le \frac{1}{\varphi_p(1)} \int_{\R^n \setminus Q} \varphi_p(M1_{\alpha Q})w.
$$
  Thus,
  \begin{align*}
    \int_{\R^n} \varphi_p(M1_Q)w
    &= \varphi_p(1) w(Q) + \int_{\R^n \setminus Q} \varphi_p(M1_Q)w \\
    &\le \int_{\R^n \setminus Q} \varphi_p(M1_{\alpha Q})w + \int_{\R^n \setminus Q} \varphi_p(M1_Q)w \\
    &\overset{\text{(A)}}{\le} \int_{\R^n \setminus Q} \varphi_p(c_\alpha M1_Q)w + \int_{\R^n \setminus Q} \varphi_p(M1_Q)w \\
    &\overset{\text{(B)}}{\le} C_\alpha \int_{\R^n \setminus Q} \varphi_p(M1_Q)w,
  \end{align*}
  where we used (A) Lemma \ref{lemma:hl_bound} and the fact that $\varphi_p$ is increasing, and (B) the doubling property of $\varphi_p$.
\end{proof}

\begin{remark}\label{REMARK-Cp-tail-hole}
If $w\in C_p$, one can prove with almost the same proof as above that
$$
\int_{\R^n} (M1_Q)^pw \leq C \int_{\R^n\setminus Q} (M1_Q)^pw.
$$
\end{remark}


\begin{proof}[Proof of Theorem \ref{theorem-counterexample}]
  Let us fix an interval $I$ and a subset $E \subset I$. We denote $A \coloneqq \bigcup_k I_k$.
  It is straightforward to check that for almost every $x \in A$ and every $r > 0$ we have
  \begin{equation}
    \label{prop:homogeneity}
    |A \cap (x-r,x+r)| \ge c_A r,
  \end{equation}
  for a uniformly bounded constant $c_A > 0$.
 \begin{enumerate}
    \item[i)]  Suppose that $w \in C_p$. Notice that by the definition of the weight $w$,
              we have $h_k \ell_k = w(\Omega_k)$. To simplify the notation, we only consider the case $k = 0$ and denote $h \coloneqq h_0$, $\ell \coloneqq \ell_k$ and $\Omega_0 \coloneqq \Omega$.
              Now applying the $C_p$ condition for the set $\Omega = [-\tfrac{1}{2}\ell, \tfrac{1}{2} \ell]$ gives us
              \begin{align*}
                h \ell
                = w(\Omega)
                \le    \int_{\R} (M1_{\Omega})^p w
                &\overset{\text{(A)}}{\le} C \int_{\R \setminus \Omega} (M1_{\Omega})^p w
                = C \int_{|x| > \frac{\ell}{2}} (M1_{\Omega}(x))^p w(x) \,dx \\
                &\overset{\text{(B)}}{=} C \int_{|x| > 1}\Big( \sup_{J \ni x} \frac{|\Omega \cap J|}{|J|} \Big)^p w(x) \,dx \\
                &\overset{\text{(C)}}{\le} C_p \int_{|x|>1} \Big( \frac{|\Omega|}{|x|}\Big)^p \, dx
                \le C_p \ell ^p \int_{|x|>1} |x|^{-p} \, dx
                \le C_p \ell^p,
              \end{align*}
              where we used (A) Remark \ref{REMARK-Cp-tail-hole}, (B) the fact that $w(x) = 0$ for every $x$ such that $\frac{\ell}{2} < |x| < 1$, and (C) for $|x| > 1$ we have $|J| \gtrsim |x|$ for every interval $J$ such that $\Omega \cap J \neq \emptyset$. Thus, we have
              $h \lesssim \ell^{p-1}$.

    \item[ii)] Suppose then that $h_k = (\ell_k)^{p-1}$. We want to show that there
               exist constants $C > 0$ and $\eps > 0$ such that they are independent of $I$ and $E$ and
              $$
                 w(E) \le C\Big( \frac{|E|}{|I|}\Big)^\eps \int_\R (M1_I)^p w.
            $$
               Naturally, we may assume that $w(I)>0$. We split the proof into two cases, depending on the interaction between $I$ and the support of $w$.

               \textbf{Case 1: $|I\cap A| > 0$}. By \eqref{prop:homogeneity}, we know that there exists a point $x_0 \in I\cap A$ such that
          $$
                 \big| A \cap ( x_0 - |I|, x_0 + |I|) \big| \ge c_A |I|.
             $$
               See Figure \ref{fig1} for this case.

               \begin{figure}[ht]
                 \centering
                 \begin{tikzpicture}[scale=1.25]


                    \draw[thin] (-3.5,0)-- (-0.5,0);
                    \draw[thin] (1.6,0)-- (7.5,0);

                    \node at (-3,0) {$\textbf{[}$};
                    \node at (-2,0) {$|$};
                    \node at (-2,-0.7) {$\underbrace{\hspace{2cm} }_{I_{k-1}}$};
                    \draw[ultra thick] (-3,0)--(-1,0);
                    \node at (-1,0) {$\textbf{]}$};

                    \node at (0,0) {$|$};

                    \node at (1,0) {$\textbf{[}$};
                    \node at (2,0) {$|$};
                    \node at (2,-0.7) {$\underbrace{\hspace{2cm} }_{I_{k}}$};
                    \draw[ultra thick] (1,0)--(3,0);
                    \node at (3,0) {$\textbf{]}$};

                    \node at (4,0) {$|$};

                    \node at (5,0) {$\textbf{[}$};
                    \node at (6,0) {$|$};
                    \node at (6,-0.7) {$\underbrace{\hspace{2cm} }_{I_{k+1}}$};
                    \draw[ultra thick] (5,0)--(7,0);
                    \node at (7,0) {$\textbf{]}$};

                    \node[red] at (-1/2,0)  {$\textbf{[}$};
                    \node[red] at (1.6,0)  {$\textbf{]}$};
                    \draw[color=red,ultra thick,dashed] (-1/2,0)--(1.6,0);
                    \node[red] at (0.55,0.7) {$\overbrace{\hspace{2cm} }^{I}$};


                    \node[blue] at (-0.8,0)  {$\Big($};

                    \filldraw[blue] (1.3,0) circle(1.5pt);
                    \small{\node[blue] at (1.3,0.2) {$x_0$};}
                    \normalsize
                    \node[blue] at (1.3,-1.2) {$\underbrace{\hspace{4.5cm} }_{(x_0-|I|,x_0+|I|)}$};
                    \node[blue] at (3.4,0)  {$\Big)$};
                 \end{tikzpicture}

                 \caption{Case 1: $|I\cap A|>0$.} \label{fig1}
               \end{figure}
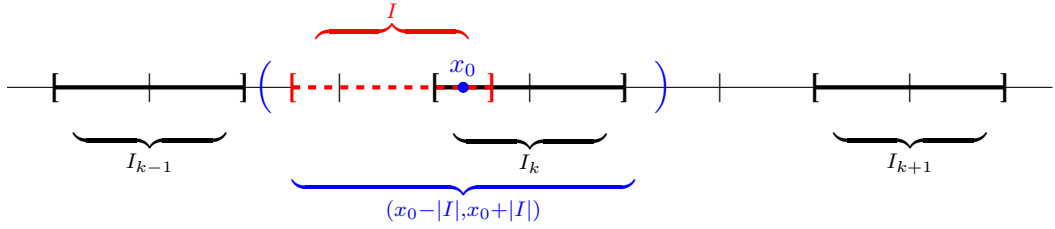
               \noindent Thus, since $1_A \le w \le 1$ a.e. and it holds that $(x_0 - |I|, x_0 + |I|)\subset 3I$, we have
               \begin{align*}
                 w(E)
                 \le |E|
                 &\le c_A^{-1} \frac{|E|}{|I|} \big| A \cap ( x_0 - |I|, x_0 + |I| ) \big| \\
                 &\le c_A^{-1} \frac{|E|}{|I|} w(3I) \le C_{A,p} \frac{|E|}{|I|} \int_\R (M1_I)^p w,
               \end{align*}
               where we used Lemma \ref{lemma:hl_bound} in the last inequality.

               \textbf{Case 2: $|I\cap A| = 0$}. In this case, we only have exactly one $k_0 \in \Z$ such that $I \cap \Omega_{k_0} \neq \emptyset$. Let us consider two subcases.

               \textbf{Case 2a: $|I|\le|\Omega_{k_0}|$}. In this case, we know that $w \le (\ell_{k_0})^{p-1}$ on $E \cap \Omega_{k_0}$. Thus, we get
               \begin{equation}
                 \label{estimate:case2a1}
                 w(E)
                 \le (\ell_{k_0})^{p-1} |\Omega_{k_0}\cap E|
                 \le (\ell_{k_0})^{p-1} |E|
                 = (\ell_{k_0})^{p-1} \frac{|E|}{|I|} |I|.
               \end{equation}
               Since $I \cap \Omega_{k_0} \neq \emptyset$ and $|I| \le |\Omega_{k_0}|$, there exists $\widetilde{I} \subset  \Omega_{k_0}$ such that $\widetilde{I} \subset 3I$ and $|\widetilde I| = |I|$. See Figure \ref{fig2a}.

               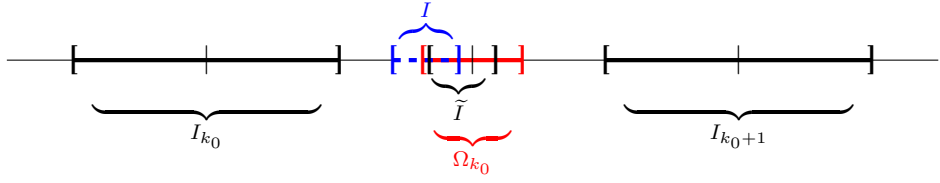
\begin{figure}[ht]
                 \centering
                 \begin{tikzpicture}[scale=1.75]

                    \draw[thin] (-3.5,0)-- (-0.6,0);
                    \draw[thin] (-0,0)-- (3.5,0);

                    \node at (-3,0) {$\textbf{[}$};
                    \draw[ultra thick] (-3,0)--(-1,0);
                    \node at (-2,0) {$|$};
                    \node at (-1,0) {$\textbf{]}$};
                    \node at (-2,-0.5) {$\underbrace{\hspace{3cm} }_{I_{k_0}}$};

                    \node[red] at (-3/8,0)  {$\textbf [$};
                    \node[red] at (3/8,0)  {$\textbf ]$};
                    \draw[red,ultra thick] (-3/8,0)--(3/8,0);
                    \node[red] at (0,-0.7) {$\underbrace{\hspace{1cm} }_{\Omega_{k_0}}$};

                    \node[blue] at (-0.6,0)  {$\textbf [$};
                    \node[blue] at (-0.35,0.3) {$\overbrace{ }^{I}$};
                    \draw[blue, ultra thick, dashed] (-0.6,0)--(-0.1,0);
                    \node[blue] at (-0.1,0)  {$\textbf ]$};

                    \node[black] at (-0.325,0)  {$\textbf [$};
                    \node[black] at (-0.105,-0.3) {$\underbrace{ }_{\widetilde I}$};
                    \node[black] at (0.175,0)  {$\textbf ]$};

                    \node at (0,0) {$|$};

                    \node at (1,0) {$\textbf{[}$};
                    \draw[ultra thick] (1,0)--(3,0);
                    \node at (2,0) {$|$};
                    \node at (3,0) {$\textbf{]}$};
                    \node at (2,-0.5) {$\underbrace{\hspace{3cm} }_{I_{k_0+1}}$};
                 \end{tikzpicture}

                 \caption{Case 2a: $|I\cap A|=0$, $|I|\leq |\Omega_{k_0}|$.} \label{fig2a}
               \end{figure}

               \noindent Thus, we have
               \begin{equation}
                 \label{estimate:case2a2}
                 \int_\R (M1_I)^p w
                 \ge \int_{\widetilde{I}} (M1_I)^p w
                 \gtrsim h_{k_0} |\widetilde{I}|
                 = (\ell_{k_0})^{p-1}|I|.
               \end{equation}
               Combining \eqref{estimate:case2a1} and \eqref{estimate:case2a2} then gives us
            $$
                 w(E) \leq C \frac{|E|}{|I|} \int_\R (M1_I)^p w,
          $$
               which is what we wanted.

               \textbf{Case 2b: $|I| > |\Omega_{k_0}|$}. In this case, we have obviously $\Omega_{k_0}\subset 3I$. See Figure \ref{fig2b}.

               \begin{figure}[h]
                 \centering
                 \begin{tikzpicture}[scale=1.75]
                    \draw[thin](-3.5,0)--(-0.5,0);
                    \draw[thin] (0.8,0)--(3.5,0);

                    \node at (-3,0) {$\textbf{[}$};
                    \draw[ultra thick] (-3,0)--(-1,0);
                    \node at (-2,0) {$|$};
                    \node at (-1,0) {$\textbf{]}$};
                    \node at (-2,-0.5) {$\underbrace{\hspace{3cm} }_{I_{k_0}}$};

                    \node[red] at (-3/8,0)  {$\textbf [$};
                    \node[red] at (3/8,0)  {$\textbf ]$};
                    \draw[red,ultra thick] (-3/8,0)--(3/8,0);
                    \node[red] at (0,-0.5) {$\underbrace{\hspace{1cm} }_{\Omega_{k_0}}$};

                    \node[blue] at (-1/2,0)  {$\textbf [$};
                    \node[blue] at (0.15,0.3) {$\overbrace{\hspace{1.95cm} }^{I}$};
                    \draw[blue, ultra thick, dashed] (-1/2,0)--(0.8,0);
                    \node[blue] at (0.8,0)  {$\textbf ]$};

                    \node at (0,0) {$|$};

                    \node at (1,0) {$\textbf{[}$};
                    \draw[ultra thick] (1,0)--(3,0);
                    \node at (2,0) {$|$};
                    \node at (3,0) {$\textbf{]}$};
                    \node at (2,-0.5) {$\underbrace{\hspace{3cm} }_{I_{k_0+1}}$};
                 \end{tikzpicture}

                 \caption{Case 2b: $|I \cap A| = 0$, $|I| > |\Omega_{k_0}|$.} \label{fig2b}
               \end{figure}
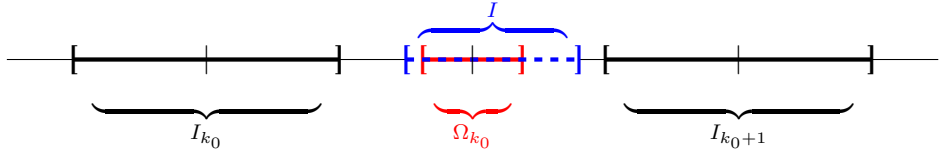
               \noindent Let $x_I$ be the center of $I$. Since $w \equiv 1$ on $I_{k_0+1}$ and $|I| \le |I_{k_0+1}| = 2$, we have
               \begin{align*}
                 \int_\R M(1_I)^p w
                 &\ge \int_\R \Big( \frac{|I|}{|I|+|x-x_I|}\Big)^p w \ge \int_{I_{k_0+1}}\Big(\frac{|I|}{|I|+|x-x_I|}\Big)^p w \\
                 &\ge c_p |I|^p \int_{I_{k_0+1}} \Big(\frac{1}{|I_{k_0+1}|}\Big)^p \ge C_p |I|^p.
               \end{align*}
               Since $\ell_{k_0} = |\Omega_{k_0}| < |I|$, we also have
             $$
                 w(E)
                 = (\ell_{k_0})^{p-1} |\Omega_{k_0} \cap E|
                 \le |\Omega_{k_0}|^{p-1}|E|
                 \le |I|^{p-1}|E|
                 = |I|^p \frac{|E|}{|I|}.
              $$
               Combining these two estimates gives us what we wanted. This completes the proof of part ii).

   \item[iii)]
 The proof of part iii) is similar to the proof of part i). We use the same
                notation as in the proof of part i). Using Lemma \ref{lemma-hole}, the facts that $\varphi_p$ is increasing and doubling and that $|\Omega| = \ell$, we get
                \begin{align*}
                  h \ell
                  = w(\Omega)
                  &\le C  \int_{\R} \varphi_p (M1_\Omega) w \le C  \int_{|x|>\ell } \varphi_p (M1_\Omega(x)) w(x) \, dx \\
                  &\le C  \int_{|x|>1} \varphi_p\Big( \frac{\ell }{|x|}\Big) \, dx  \lesssim C  \int_1^\infty \varphi_p\Big( \frac{\ell }{x}\Big) \, dx
                  = C  \ell \int_{0}^\ell \varphi_p(x) \, \frac{dx}{x^2},
                \end{align*}
                where we used integration by substitution in the last step.

   \item[iv)] We argue as in the proof of ii). The cases $1$ and 2a are essentially the same, since the value of $h_k$ does not really play a role in these cases. Let us prove the case 2b. We get
    \begin{align*}
      \int_\R \varphi_p(M1_I) w
      &\ge \int_\R \varphi_p \Big( \frac{|I|}{|I|+|x - x_I|}\Big) w \ge \int_{I_{k_0+1}}\varphi_p \Big( \frac{|I|}{|I|+|x-x_I|} \Big) w \\
      &\ge \int_{I_{k_0+1}}\varphi_p \Big( \frac{|I|}{2|I_{k_0+1}|}\Big) \ge \int_{I_{k_0+1}}\varphi_p \Big( \frac{|I|}{4}\Big)
      \ge C_\varphi \varphi_p(|I|),
    \end{align*}
    since $\varphi_p$ is increasing and doubling and $w = 1$ a.e. on $I_{k_0+1}$. Also, we have
$$
      w(E)
      = \frac{\varphi_p(\ell_{k_0})}{\ell_{k_0}} |\Omega_{k_0} \cap E|
      \le \frac{\varphi_p(|\Omega_{k_0}|)}{|\Omega_{k_0}|} |E|
      \overset{\text{(*)}}{\le} \frac{\varphi_p(|I|)}{|I|} |E|
      = \varphi_p(|I|) \frac{|E|}{|I|},
$$
    where we used the fact that $t \mapsto \frac{\varphi_p(t)}{t}$ is an increasing function in (*). This finishes the proof. \qedhere
 \end{enumerate}
\end{proof}

\subsection{Kahanp\"a\"a--Mejlbro weights in higher dimensions}

Although the definition of $\widetilde{C}_p$ makes sense in every dimension,
the proof of Theorem \ref{thm:strict_inclusions} works only in dimension one since it relies on the one-dimensional construction of Kahanp\"a\"a--Mejlbro weights and their properties.
In this section, we explain how the construction and the the proofs of Theorem \ref{theorem-counterexample} and Theorem \ref{thm:strict_inclusions} can be generalized for higher dimensions.

For a point $x = (x_1,\ldots,x_n) \in \R^n$ and $r > 0$, we let $Q(x,r)$ be the (closed) cube centered at $x$ with side length $2r$:
$$
  Q(x,r) \coloneqq [x_1 - r, x_1 + r] \times \ldots \times [x_n - r, x_n + r].
$$
Let us construct the $n$-dimensional analogue of the set $A$ from the proof of Theorem \ref{theorem-counterexample}. For every $m = (m_1, \ldots ,m_n) \in \Z^n$, we set
$$
  R_m \coloneqq Q(4m-2,1) = \left[ 4m_1-3, 4m_1-1 \right] \times \ldots \times \left[ 4m_n-3 ,4m_n-1 \right].
$$
We now use the cubes $R_m$ similarly as the intervals $I_k$ and set $A \coloneqq \bigcup_{m\in \Z^n} R_m$.

\begin{lemma}
  There exists a constant $c_A > 0$ such that, for every $x \in A$ and $r > 0$,
  \begin{equation}
    \label{prop:homogeneity_greater_dimensions}
    |A \cap Q(x,r)| \ge c_A r^n.
  \end{equation}

\end{lemma}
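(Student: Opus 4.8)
The plan is to argue scale by scale, separating small scales — where we only use that $x$ lies in one of the building blocks $R_m$ — from large scales, where we use that $A$ is $4\Z^n$-periodic with density $2^{-n}$. Throughout I write $Q(x,r)$ for the cube of side length $2r$ centered at $x$, so $|Q(x,r)| = (2r)^n$; the constant $c_A$ produced will depend only on $n$, and its exact value is irrelevant.

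First, for small scales $0 < r \le 1$: since $x \in A = \bigcup_{m \in \Z^n} R_m$, I would fix $m \in \Z^n$ with $x \in R_m$ and let $[a_i, a_i + 2]$ be the $i$-th edge of $R_m$, so that $x_i \in [a_i, a_i + 2]$. If $x_i \ge a_i + r$ then $[x_i - r, x_i] \subset [a_i, a_i + 2]$, while if $x_i \le a_i + 2 - r$ then $[x_i, x_i + r] \subset [a_i, a_i + 2]$; since $r \le 1$, these two conditions cannot both fail, so $[x_i - r, x_i + r] \cap [a_i, a_i + 2]$ has length at least $r$ in every coordinate. Multiplying over $i$ gives $|A \cap Q(x,r)| \ge |R_m \cap Q(x,r)| \ge r^n$.

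Next, for large scales $r \ge 6$, I would exploit periodicity. For $m \in \Z^n$ put $C_m := Q(4m - 1, 2) = \prod_{i=1}^n [4m_i - 3, 4m_i + 1]$. The cubes $\{C_m\}_m$ tile $\R^n$ (overlapping only along their boundaries), satisfy $R_m \subset C_m$, and $|R_m| = 2^n = 2^{-n}|C_m|$. Now $C_m \subset Q(x,r)$ holds precisely when $4m_i$ lies in the interval $[x_i - r + 3, x_i + r - 1]$ for every $i$, an interval of length $2r - 4$; since any closed interval of length $L \ge 0$ contains at least $\lfloor L/4 \rfloor$ multiples of $4$ and $\lfloor (2r-4)/4 \rfloor \ge r/2 - 2 \ge r/6$ for $r \ge 6$, there are at least $(r/6)^n$ such cubes $C_m$. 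As the corresponding sets $R_m \subset A$ are pairwise disjoint, $|A \cap Q(x,r)| \ge 2^n (r/6)^n = 3^{-n} r^n$. Finally, for the intermediate range $1 < r < 6$, I would simply use $Q(x,1) \subset Q(x,r)$ and the first step to get $|A \cap Q(x,r)| \ge |A \cap Q(x,1)| \ge 1 > 6^{-n} r^n$. Combining the three ranges proves the lemma with $c_A := 6^{-n}$.

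I do not expect a genuine obstacle: this is exactly the $n$-dimensional analogue of the one-dimensional estimate \eqref{prop:homogeneity} used in the proof of Theorem~\ref{theorem-counterexample}, and the only mildly tedious point — keeping track of the boundary corrections when counting how many periodicity cells $C_m$ fit inside $Q(x,r)$ — has no bearing on the existence of a purely dimensional constant $c_A$.
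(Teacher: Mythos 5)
Your proof is correct and follows essentially the same two‑part strategy as the paper: for small $r$ you use only that $x$ sits inside some $R_m$ to extract a copy of $r^n$, and for large $r$ you count the periodicity cells of $A$ contained in $Q(x,r)$. The only organizational difference is that you introduce a third, intermediate range $1<r<6$ (handled by monotonicity), which lets you keep the small‑scale step to $r\le 1$ and thereby sidesteps the borderline cases that the paper's ``one of the $2^n$ subcubes must lie inside $Q_0$'' claim faces when $r$ is close to $2$.
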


\begin{proof}
  Let us fix $x \in A$ and $0 < r < \infty$. Then $x$ lies inside exactly one of the cubes $R_m$. Let us denote this cube by $Q_0$.
  \begin{enumerate}
    \item[$\bu$] Suppose that $0 < r < 2$. Let us break $Q(x,r)$ into $2^n$ subcubes of
                 side length $r$. Since $x \in Q_0$ and $\ell(Q_0) = 2 > r$, at least one of the subcubes has to lie inside $Q_0$. Let us denote this subcube by $P$. Thus,
          $$
                   |A \cap Q(x,r)| \ge |Q_0 \cap Q(x,r)| \ge |P| = r^n.
          $$

    \item[$\bu$] Suppose that $2 + 4j \le r < 2+4(j+1)$ for some $j \ge 0$.
                 There are at least $(2j+1)^n$ cubes $R_m$ contained in $Q(x,r)$. Since each of these cubes has measure $2^n$, we get
                 \begin{align*}
                   |A \cap Q(x,r)|
                   & \ge (2j+1)^n 2^n
                   = \frac{(4j+2)^n}{r^n} r^n
                   \ge \frac{(4j+2)^n}{(2+4(j+1))^n} r^n
                   = \left( \frac{1}{3} \right)^n r^n.\qedhere
                 \end{align*}
  \end{enumerate}
\end{proof}
Let us then construct the $n$-dimensional weights. For every $m \in \Z^n$, let $\ell_m$ be a number such that $0 < \ell_m < 1$ and $\inf_m \ell_m = 0$. We set
\begin{align*}
  P_m \coloneqq Q\left(4m,\frac{\ell_m}{2}\right) = \left[4m_1-\frac {\ell_m}{2},4m_1+\frac {\ell_m }{2}\right] \times \ldots \times \left[4m_n-\frac {\ell_m }{2},4m_n+\frac {\ell_m }{2}\right],
\end{align*}
for every $m \in \Z$. Thus, we have $\ell(P_m) = \ell_m$. See Figure \ref{Figure-with-rectangles} for a visual description of the sets $A$ and $P_m$ in dimension $2$. Now we can define the Kahanp\"a\"a--Mejlbro weight $w$ in an obvious way as
\begin{align}\label{great-dimension-weight}
  w = 1_A + \sum_{m \in \Z^n} h_m 1_{P_m},
\end{align}
where $h_m$ are numbers such that $0 < h_m < N$ for every $m \in \Z^n$ for a uniformly bounded constant $N$. Naturally, these weights share a lot of properties with their
$1$-dimensional counterparts but because of the dimension, we have to make some modifications.

\begin{center}
  \begin{figure}
    \begin{tikzpicture}[scale=0.7]

        \draw[] (-9.3,-9)--(9,-9);
        \foreach \x in {-8,...,8}
            \draw (\x,-9.1) -- (\x,-8.9);
        \foreach \x in {-8,-4,0,4,8}
            \node[anchor=north] at (\x,-9.1) {\x};	

        \draw (-9,-9.3)--(-9,9);
        \foreach \y in {-8,...,8}
            \draw (-9.1,\y) -- (-8.9,\y);
        \foreach \y in {-8,-4,0,4,8}
            \node[anchor=east] at (-9.1,\y) {\y};

                \foreach \x in {-1,0,1,2}
                \foreach \y in {-1,0,1,2}
                        \draw[draw=red,fill=red!20]  (4*\x-3,4*\y-3) rectangle (4*\x-1,4*\y-1);

                \foreach \x in {-1,0,1,2}
                \foreach \y in {-1,0,1,2}
                        \node at  (4*\x-2,4*\y-2) {$R_{(\x,\y)}$};



                \foreach \x in {-2,-1,0,1,2}
                \foreach \y in {-2,-1,0,1,2}
                    \draw[draw=blue,fill=blue!20] ({4*\x-0.5/(abs(\x)+abs(\y)+1)},{4*\y-0.5/(abs(\x)+abs(\y)+1)}) rectangle ({4*\x+0.5/(abs(\x)+abs(\y)+1)},{4*\y+0.5/(abs(\x)+abs(\y)+1)});


            \node at (0,0) {\scriptsize{$ P_{(0,0)}$}};

    \end{tikzpicture}    		
    \caption{The cubes $R_m$ (in red) and $P_m$ (in blue) in $\R^2$, with $m=(m_1,m_2)$, for
             $\ell_m = \frac{1}{|m|+1}$. Each $R_m$ has side length $2$ and $P_m$ has sidelength $\ell_m$.}
    \label{Figure-with-rectangles}
  \end{figure}
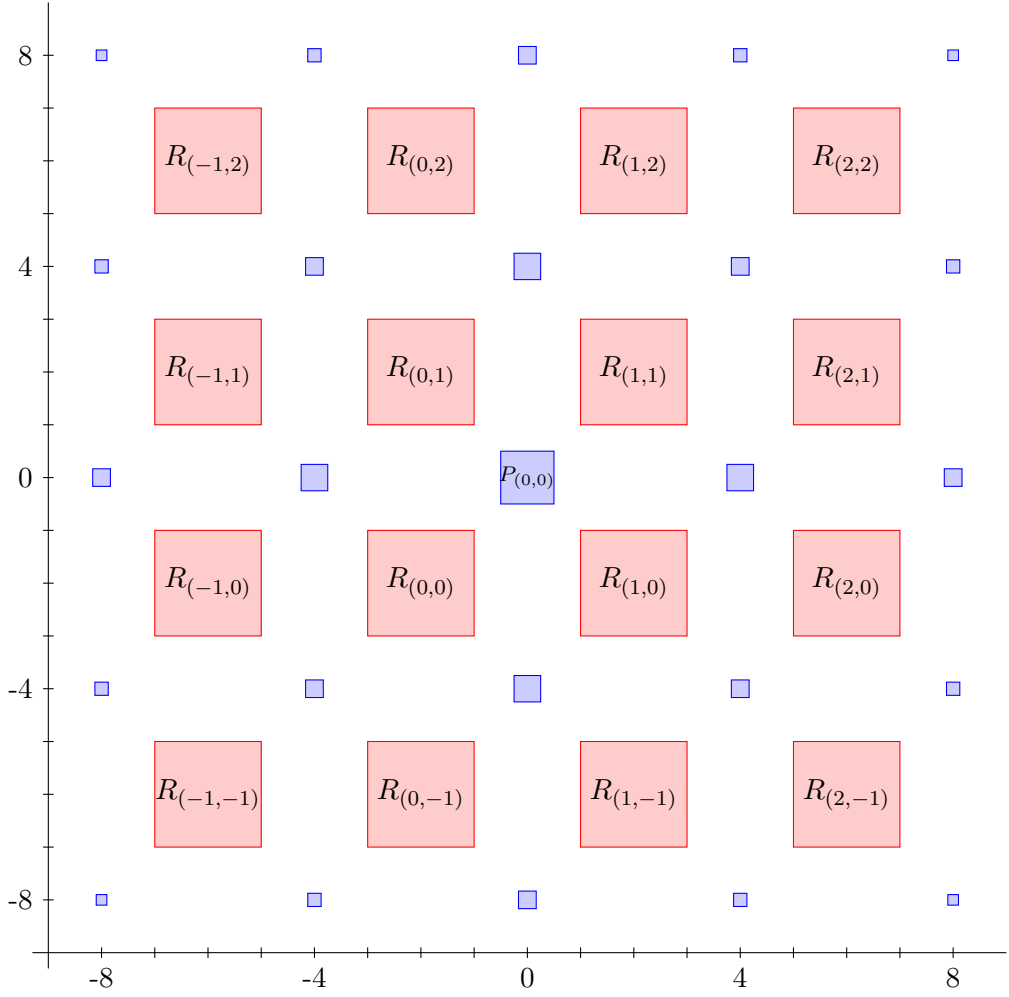
\end{center}

An analogue of Theorem \ref{theorem-counterexample} holds for these $n$-dimensional weights in the following form:
\begin{theorem}
  \label{great-dimension.theorem}
  Let $w$ a weight as in \eqref{great-dimension-weight}.
  \begin{enumerate}
    \item[i)] If $w \in C_p$, then $h_m \lesssim (\ell_m)^{n(p-1)}$.
    \item[ii)] If $h_m = (\ell_m)^{n(p-1)}$, then $w \in C_p$.
    \item[iii)] If $w \in \widetilde{C}_p$, then $h_m \lesssim \int_0^{(\ell_m)^n} \varphi_p(t) \, \frac{dt}{t^2}$.
    \item[iv)] If $h_m = \frac{\varphi_p(\ell_m^n)}{\ell_m^n}$, then $w \in \widetilde{C}_p$.
  \end{enumerate}
\end{theorem}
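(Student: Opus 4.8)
The plan is to repeat the proof of Theorem~\ref{theorem-counterexample} almost verbatim, replacing \eqref{prop:homogeneity} by its $n$-dimensional analogue \eqref{prop:homogeneity_greater_dimensions} and carrying the bookkeeping of measures. The only genuinely new input is a pair of elementary geometric facts about $A=\bigcup_{m\in\Z^n}R_m$. Writing $Q=\prod_{i=1}^n J_i$ and $G:=\bigcup_{k\in\Z}[4k-3,4k-1]$, the product structure gives $|Q\cap A|=\prod_i|J_i\cap G|$, so $|Q\cap A|=0$ forces one projection $J_i$ to lie (up to a null set) inside a gap of $G$; as these gaps have length $2$, this yields $\ell(Q)\le 2$. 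Moreover, if $\ell(Q)\le 2$ then $Q$ meets at most one cube $P_m$, because the centers $4\Z^n$ are $4$-separated in the sup-norm whereas $Q\cap P_m\neq\emptyset$ implies the center of $Q$ lies within sup-distance $\ell(Q)/2+\ell_m/2\le 3/2$ of $4m$. In the one-dimensional proof these facts were used implicitly (an interval disjoint from $A$ had length $<2$ and met exactly one $\Omega_k$); once they are in hand, i)--iv) follow from the computations already carried out in Theorem~\ref{theorem-counterexample}, with $\ell_k$ replaced by $\ell_m$ and $|\Omega_k|=\ell_k$ replaced by $|P_m|=\ell_m^n$.

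For i), I would apply the $C_p$ condition to $E=P_{m_0}$ as in part i) of Theorem~\ref{theorem-counterexample}: using $w(P_{m_0})=h_{m_0}\ell_{m_0}^n$, Remark~\ref{REMARK-Cp-tail-hole}, the vanishing of $w$ on $Q(4m_0,1)\setminus P_{m_0}$, and the bound $M1_{P_{m_0}}(x)\lesssim(\ell_{m_0}/|x-4m_0|_\infty)^n$ for $|x-4m_0|_\infty>1$, one gets $h_{m_0}\ell_{m_0}^n\lesssim\ell_{m_0}^{np}\int_{|x|_\infty>1}|x|_\infty^{-np}\,dx\lesssim\ell_{m_0}^{np}$, i.e. $h_{m_0}\lesssim(\ell_{m_0})^{n(p-1)}$. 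Part iii) runs the same way with $\varphi_p$ in place of $t\mapsto t^p$ and Lemma~\ref{lemma-hole} in place of Remark~\ref{REMARK-Cp-tail-hole}; the point to watch is the substitution $t=\ell_{m_0}^n\rho^{-n}$ in $\int_1^\infty\varphi_p(\ell_{m_0}^n\rho^{-n})\rho^{n-1}\,d\rho$, whose Jacobian converts $\rho^{n-1}\,d\rho$ into a multiple of $t^{-2}\,dt$ and whose upper limit becomes $(\ell_{m_0})^n$, producing $h_{m_0}\lesssim\int_0^{(\ell_{m_0})^n}\varphi_p(t)\,dt/t^2$.

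For ii) and iv), I would fix a cube $Q$ with $w(Q)>0$ (else the claim is trivial) and use the same three cases as in the proof of ii) of Theorem~\ref{theorem-counterexample}. In Case~1 ($|Q\cap A|>0$), pick via \eqref{prop:homogeneity_greater_dimensions} a point $x_0\in Q\cap A$ with $|A\cap Q(x_0,\ell(Q))|\ge c_A\ell(Q)^n$, note $Q(x_0,\ell(Q))\subset 3Q$, and conclude $w(E)\le|E|\le c_A^{-1}\tfrac{|E|}{|Q|}\,w(3Q)\lesssim\tfrac{|E|}{|Q|}\int(M1_Q)^pw$ (respectively with $\varphi_p$, via the argument of Lemma~\ref{lemma-hole}) using Lemma~\ref{lemma:hl_bound}; here $w\le 1$ when $h_m=(\ell_m)^{n(p-1)}$, and $w\le\varphi_p(1)$ when $h_m=\varphi_p(\ell_m^n)/\ell_m^n$ since $t\mapsto\varphi_p(t)/t$ is increasing. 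If $|Q\cap A|=0$, then $\ell(Q)\le 2$ and $Q$ meets a unique $P_{m_0}$. In Case~2a ($|Q|\le|P_{m_0}|$), one has $w(E)\le h_{m_0}|E|$, while choosing a subcube $\widetilde Q\subset P_{m_0}\cap 3Q$ with $|\widetilde Q|=|Q|$ gives $\int(M1_Q)^pw\ge h_{m_0}\int_{\widetilde Q}(M1_Q)^p\gtrsim h_{m_0}|Q|$ (respectively $\int\varphi_p(M1_Q)w\ge h_{m_0}\int_{\widetilde Q}\varphi_p(M1_Q)\gtrsim h_{m_0}|Q|$); the common factor $h_{m_0}$ cancels. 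In Case~2b ($|Q|>|P_{m_0}|$), take the adjacent good cube $R_{m_0}$, on which $w\equiv 1$, $|R_{m_0}|=2^n$, $\ell(R_{m_0})=2\ge\ell(Q)$ and $\dist(Q,R_{m_0})\lesssim 1$; then $\int(M1_Q)^pw\ge\int_{R_{m_0}}(M1_Q)^pw\gtrsim\ell(Q)^{np}=|Q|^p$ (respectively $\gtrsim\varphi_p(|Q|)$, using the doubling of $\varphi_p$ in reverse), while $w(E)=h_{m_0}|P_{m_0}\cap E|\le|P_{m_0}|^{p-1}|E|\le|Q|^{p-1}|E|$ (respectively $\le\tfrac{\varphi_p(|P_{m_0}|)}{|P_{m_0}|}|E|\le\tfrac{\varphi_p(|Q|)}{|Q|}|E|$, since $t\mapsto\varphi_p(t)/t$ is increasing and $|P_{m_0}|<|Q|$). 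In every case the exponent $\varepsilon=1$ is admissible in \eqref{cond:c_p} and \eqref{C-psi} since $|E|/|Q|\le 1$.

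The main obstacle is really only the two geometric facts of the first paragraph: once one knows that a cube disjoint from $A$ is short and overlaps at most one bump $P_{m_0}$, every remaining step is the one-dimensional argument with the purely cosmetic substitutions $\ell_k\mapsto\ell_m$, $\ell_k\mapsto\ell_m^n$ in the measure estimates. The only other delicate point is computational, namely getting the change of variables in the tail integrals of i) and iii) to land the exponent exactly at $(\ell_m)^n$ in the upper limit; this is precisely what accounts for the appearance of $\ell_m^n$ in statements iii) and iv).
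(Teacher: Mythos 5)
Your proposal is correct and follows the same strategy the paper gives (reduce to the one-dimensional argument of Theorem~\ref{theorem-counterexample}, replacing \eqref{prop:homogeneity} by \eqref{prop:homogeneity_greater_dimensions} and $\ell_k$, $|\Omega_k|$ by $\ell_m$, $|P_m|=\ell_m^n$). You helpfully make explicit the two geometric facts that the paper leaves implicit when it says ``there are no more cases than the $1$-dimensional cases 1, 2a and 2b'' --- namely that a cube disjoint from $A=G^n$ has side length at most $2$, and that such a cube meets at most one bump $P_m$ --- and the change-of-variables computation behind the upper limit $(\ell_m)^n$ in parts iii) and iv); both check out.
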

The correct exponent is now $n(p-1)$ instead of $p-1$ because $|P_m|=(\ell_m)^n$.

The proof of this theorem is essentially the same as in the $1$-dimensional case. Since Lemma~\ref{lemma-hole} holds in any dimension, the proofs of i) and iii) work also in any dimension. Parts ii) and iv) also hold because of \eqref{prop:homogeneity_greater_dimensions} and there are no more cases than the $1$-dimensional cases 1, 2a and 2b. The rest of the computations are essentially the same as before.

With the help of Theorem \ref{great-dimension.theorem}, it is straightforward to generalize Theorem \ref{thm:strict_inclusions} for higher dimensions:

\begin{theorem}
\label{thm:anyd}
  In any dimension, we have
  \begin{enumerate}
    \item[a)] $C_p \setminus \widetilde C_p \neq \emptyset$,
    \item[b)] $ \cup_{\varepsilon>0} C_{p+\varepsilon} \subsetneq \widetilde C_p$.
  \end{enumerate}
  In particular, the condition $C_{p+\eps}$ is not necessary for \eqref{ineq:coifman-fefferman} to hold for Calder\'on--Zygmund operators.
\end{theorem}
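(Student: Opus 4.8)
The plan is to repeat the argument of Theorem~\ref{thm:strict_inclusions} essentially verbatim, with the one-dimensional counterexample machinery replaced by its $n$-dimensional analogue in Theorem~\ref{great-dimension.theorem}. Concretely, I would fix a family $(\ell_m)_{m\in\Z^n}$ with $0<\ell_m<1$ and $\inf_m\ell_m=0$ (for instance $\ell_m=(|m|+1)^{-1}$, as in Figure~\ref{Figure-with-rectangles}) and build the weights $w$ of the form \eqref{great-dimension-weight} with the two prescriptions of heights $h_m$ supplied by Theorem~\ref{great-dimension.theorem}. In both cases the relevant limits reduce, after the substitution $u=\ell_m^n$, to the limit computations already performed in the proof of Theorem~\ref{thm:strict_inclusions}.

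For part a), I would set $h_m=(\ell_m)^{n(p-1)}$, so that part~ii) of Theorem~\ref{great-dimension.theorem} gives $w\in C_p$ at once. To see $w\notin\widetilde C_p$, I would use part~iii): membership in $\widetilde C_p$ would force $(\ell_m)^{n(p-1)}\lesssim\int_0^{(\ell_m)^n}\varphi_p(t)\,\frac{dt}{t^2}$ uniformly in $m$; writing $u=(\ell_m)^n$ (which ranges over a set with infimum $0$) it suffices to observe that $\inf_{0<u<1}u^{-(p-1)}\int_0^u\varphi_p(t)\,\frac{dt}{t^2}=0$, which is the L'H\^opital computation from the proof of Theorem~\ref{thm:strict_inclusions}~a): the ratio tends to $\tfrac1{p-1}\lim_{u\to0^+}\varphi_p(u)/u^p=0$.

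For part b), I would set $h_m=\varphi_p(\ell_m^n)/\ell_m^n$, so that part~iv) of Theorem~\ref{great-dimension.theorem} gives $w\in\widetilde C_p$. To see $w\notin C_{p+\varepsilon}$ for every $\varepsilon>0$, I would apply part~i) (with exponent $p+\varepsilon$): membership in $C_{p+\varepsilon}$ would force $\varphi_p(\ell_m^n)/\ell_m^n\lesssim(\ell_m)^{n(p+\varepsilon-1)}$, i.e. with $u=\ell_m^n$, $\inf_{0<u<1}u^{p+\varepsilon}/\varphi_p(u)=0$. But $u^{p+\varepsilon}/\varphi_p(u)=u^{\varepsilon}\log^2(1+\tfrac1u)\to0$ as $u\to0^+$, exactly as in the proof of Theorem~\ref{thm:strict_inclusions}~b). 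This establishes $\bigcup_{\varepsilon>0}C_{p+\varepsilon}\subsetneq\widetilde C_p$; combining this strict inclusion with Theorem~\ref{thm:Cptilde} (valid in every dimension), the weight built in part~b) satisfies \eqref{ineq:coifman-fefferman} for Calder\'on--Zygmund operators while failing the $C_{p+\varepsilon}$ condition for all $\varepsilon>0$, which is the last assertion.

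There is essentially no new obstacle: the genuinely delicate content — verifying the four items of Theorem~\ref{great-dimension.theorem} via the homogeneity estimate \eqref{prop:homogeneity_greater_dimensions} and the tail/hole Lemma~\ref{lemma-hole} — has already been dispatched, and the only bookkeeping point is that the natural exponent in dimension $n$ is $n(p-1)$ rather than $p-1$, which is harmless since after the substitution $u=\ell_m^n$ every computation collapses to the one already done in dimension one. I would therefore present the proof very briefly, essentially as ``repeat the proof of Theorem~\ref{thm:strict_inclusions} with Theorem~\ref{great-dimension.theorem} in place of Theorem~\ref{theorem-counterexample}.''
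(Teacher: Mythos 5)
Your proposal is correct and is exactly the argument the paper intends: the authors state that Theorem~\ref{great-dimension.theorem} makes the generalization of Theorem~\ref{thm:strict_inclusions} to $\R^n$ straightforward, and you carry this out by plugging in the $n$-dimensional choices of $h_m$ and reducing the limit computations to the one-dimensional ones via $u=\ell_m^n$. No gaps; the only cosmetic point is that in part~b) the phrase ``i.e.\ $\inf_{0<u<1}u^{p+\varepsilon}/\varphi_p(u)=0$'' should read as the contradicting observation rather than an equivalence, which is how you use it in the next sentence.
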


\section*{Acknowledgments}
The active work on this manuscript began in May 2019 when the fourth author visited the other authors at the Basque Center for Applied Mathematics in Bilbao. He wishes to thank the people at BCAM for numerous interesting conversations and the kind hospitality shown during his visit.

\end{document}